\documentclass{article}
\usepackage[margin=1in]{geometry}
\usepackage{graphicx}
\usepackage{amsmath}
\usepackage{amssymb}
\usepackage{amsthm}
\usepackage{ytableau}
\usepackage{tikz}
\usepackage{float}
\usepackage{cite}
\usepackage[colorlinks=true,linkcolor=blue!60!black,citecolor=green!40!black,urlcolor=blue!60!black]{hyperref}

\theoremstyle{definition}
\newtheorem{definition}{Definition}[section]
\newtheorem{example}[definition]{Example}
\newtheorem{theorem}[definition]{Theorem}
\newtheorem{remark}[definition]{Remark}
\newtheorem{corollary}[definition]{Corollary}
\newtheorem{proposition}[definition]{Proposition}
\newtheorem{lemma}[definition]{Lemma}

\ytableausetup{boxsize=1.2em}

\title{From stylic monoid to Catalan monoid}
\author{Itamar Stein \\ Mathematics Unit, Shamoon College of Engineering \\
\href{mailto:steinita@gmail.com}{\texttt{steinita@gmail.com}}}
\date{}

\begin{document}

\maketitle

\begin{abstract}
The stylic monoid $\mathrm{Styl}_n$, introduced by Abram and
Reutenauer, is the quotient of the plactic monoid by the relations
$x^2=x$, and its elements are represented by $N$-tableaux. Volkov
showed that the Catalan monoid $\mathrm{Cat}_n$ of order-preserving,
order-decreasing self-maps of $\{0,1,\ldots,n\}$ is a quotient of
$\mathrm{Styl}_n$. However, the quotient map is defined on generators,
and it is not apparent how to see, from an $N$-tableau, the map in
$\mathrm{Cat}_n$ it corresponds to. In this paper we give a simple
visual way to read off this map, and some of its main properties,
from the $N$-tableau.

The new ingredient is that we do not insist on drawing an $N$-tableau
as a classical Young tableau: we allow the entries of each row to be
shifted relative to the row below, as long as each entry stays above
a smaller one. We call this a positioning, and prove that the column
word read from any positioning is plactically equivalent to the usual
column word; so every positioning can be used to compute the quotient
map. We work with the tight positioning, in which each entry is pushed
as far right as possible, and define the full core of an $N$-tableau:
the part of each column that climbs by consecutive values from the
bottom row. We call an $N$-tableau full if it equals its full core.
We prove that passing to the full core does not change the
image in $\mathrm{Cat}_n$, that full $N$-tableaux are in bijection with
$\mathrm{Cat}_n$, and we show how to read the corresponding map
directly off a full $N$-tableau.

\medskip\noindent\emph{Keywords:} stylic monoid; Catalan monoid;
plactic monoid; Knuth relations; transformation monoids.

\noindent\emph{2020 Mathematics Subject Classification:} 20M05, 20M20,
05E16, 68R15.
\end{abstract}

\section{Introduction}
\label{sec:intro}

The stylic monoid $\mathrm{Styl}_n$ was introduced by Abram and
Reutenauer \cite{AR}. The free monoid $\mathbf{n}^*$ over
$\mathbf{n}=\{1,\ldots,n\}$ acts on the left on columns by Schensted
left insertion, and $\mathrm{Styl}_n$ is the transition monoid of this
action. Abram and Reutenauer showed that $\mathrm{Styl}_n$ is a
quotient of the plactic monoid: it is generated by $\mathbf{n}$ subject
to the Knuth relations ($yxz=yzx$ for $x<y\le z$ and $xzy=zxy$ for
$x\le y<z$) together with the idempotent relations $x^2=x$. This
presentation is the description of $\mathrm{Styl}_n$ that we use
throughout the paper. The elements of $\mathrm{Styl}_n$ are represented
visually by $N$-tableaux: semistandard tableaux with strictly
increasing rows, each row contained in the row below it. Among other
results, Abram and Reutenauer showed that $|\mathrm{Styl}_n|$ is the
number of partitions of a set with $n+1$ elements, that
$\mathrm{Styl}_n$ is $\mathcal{J}$-trivial, and that it is the
syntactic monoid of the length of the longest strictly decreasing
subword.

Being a natural finite quotient of the plactic monoid, the stylic
monoid has attracted the attention of several authors since its
introduction. Topics studied include its identities
\cite{Volkov,AirdRibeiro}, its representation theory
\cite{ARS}, similar quotients of other plactic-like monoids
\cite{AHMNT}, and coherent presentations \cite{Hage}. It also inspired the
grammic monoid, defined analogously by an action on rows
\cite{Choffrut}.

The Catalan monoid $\mathrm{Cat}_n$ is the monoid of all
order-preserving, order-decreasing self-maps of the chain
$\{0,1,\ldots,n\}$. It is a classical object in its own right: its cardinality is the
$(n+1)$-st Catalan number, and it has been studied in semigroup theory as a monoid of
transformations of a chain \cite{Higgins93}, \cite[Chapter~14]{GM},
and in algebraic combinatorics as the monoid of non-decreasing parking
functions \cite{HivertThiery}. Like $\mathrm{Styl}_n$, it is
$\mathcal{J}$-trivial.

The starting point of this paper is an observation made by Volkov in
\cite{Volkov}: $\mathrm{Cat}_n$ is a homomorphic image of
$\mathrm{Styl}_n$. His argument works with
presentations. Solomon \cite[Section~9]{Solomon} showed that $\mathrm{Cat}_n$ is
generated by $a_1,\ldots,a_n$ subject to the relations $a_i^2=a_i$,
$a_ia_k=a_ka_i$ for $|i-k|\ge2$, and
$a_ia_{i+1}a_i=a_{i+1}a_ia_{i+1}=a_{i+1}a_i$; Volkov checks that the
relations of a presentation of $\mathrm{Styl}_n$ from \cite{AR} hold
in $\mathrm{Cat}_n$, so
that sending each generator of $\mathrm{Styl}_n$ to the corresponding
generator of $\mathrm{Cat}_n$ extends to a surjective homomorphism
$\Phi:\mathrm{Styl}_n\twoheadrightarrow\mathrm{Cat}_n$.
Since $\Phi$ is defined on generators, this argument does not show which map in $\mathrm{Cat}_n$
corresponds to a given $N$-tableau.

Our goal is to describe $\Phi$ visually: to look at an $N$-tableau $T$
and read off directly the map $\Phi(T)\in\mathrm{Cat}_n$. Of course,
$\Phi(T)$ can always be computed by composing the generators of
$\mathrm{Cat}_n$ along a word representing $T$, such as its row word;
the point is to see the answer in the picture of $T$ instead.

Section~\ref{sec:preliminaries} recalls the necessary background on
the plactic and stylic monoids. The main new ingredient is some freedom in how an $N$-tableau is drawn.
Classically, the rows of a tableau are left-justified. We allow each
row to be shifted relative to the row below it, entry by entry, as long
as every entry stays above a smaller one; we call such a placement a
\emph{positioning} (Section~\ref{sec:alignment}). A positioning changes
the columns, and hence the column word, but we show that it does not
change its plactic class (Theorem~\ref{thm:col-invariant}, proved in
Section~\ref{sec:col-plactic} for arbitrary row arrays). So any
positioning of an $N$-tableau $T$ can be used to compute $\Phi(T)$.

We work with the \emph{tight positioning}, in which each entry sits
above the largest smaller entry of the row below
(Section~\ref{sec:tight}). Read upward from the bottom row, a column
of the tight positioning may climb by consecutive values all the way
up, or it may at some row jump by more than one; we call such a jump a
\emph{gap}. Keeping, in every column, only the cells below its first
gap gives the \emph{full core} $T^f$ of $T$, which is again an $N$-tableau
(Section~\ref{sec:reduction}). The main result of
Section~\ref{sec:catalan-image} is that $\Phi(T)=\Phi(T^f)$
(Theorem~\ref{thm:phi-full-core}): the cells above the gaps do not
affect the image in $\mathrm{Cat}_n$. We call $T$ \emph{full} if
$T=T^f$.

In Section~\ref{sec:full-image} we describe $\Phi(T)$ for a full
$N$-tableau $T$. A letter that does
not occur in $T$ is fixed, and a letter $y$ that does occur is sent to
one less than the bottom entry of the leftmost column containing $y$
(Theorem~\ref{thm:phi-full}). Adding a bottom row $0,1,\ldots,n$ makes
this rule uniform (Corollary~\ref{cor:phi-full-augmented}), and from it
we show that $\Phi$ is injective on full $N$-tableaux. Hence full
$N$-tableaux are in bijection with $\mathrm{Cat}_n$, and $T^f$ is the
unique full $N$-tableau with the same image as $T$
(Corollary~\ref{cor:full-bijection}). We then read off the kernel and
image of $\Phi(T)$ from the tops and bases of the columns
(Proposition~\ref{prop:image-kernel}), and, redrawing the tableau as a
wall of pyramids, show that $\Phi(T)$ is idempotent if and only if the
pyramids stand on pairwise disjoint bases
(Proposition~\ref{prop:idempotents}). The paper ends with a complete
example (Section~\ref{sec:complete-example}), which follows a single
$N$-tableau through all the steps.

\section{Preliminaries}
\label{sec:preliminaries}

Fix $n\ge1$; our alphabet throughout is $\mathbf{n}:=\{1,\ldots,n\}$,
with its usual order.

\subsection{The plactic monoid}
\label{sec:prelim-plactic}

The \emph{Knuth relations} on $\mathbf{n}^*$ are
\[
\text{(K1)}\quad yxz \equiv_{\mathrm{plac}} yzx \quad (x<y\le z), \qquad\qquad
\text{(K2)}\quad xzy \equiv_{\mathrm{plac}} zxy \quad (x\le y<z),
\]
for letters $x,y,z\in\mathbf{n}$. Two words
$u,v\in\mathbf{n}^*$ are \emph{plactically
equivalent}, written $u\equiv_{\mathrm{plac}} v$, if $v$ can be
obtained from $u$ by a finite sequence of such relations, each applied
to three consecutive letters. This is a congruence on $\mathbf{n}^*$, and the
\emph{plactic monoid} is $\mathrm{Plac}_n := \mathbf{n}^*/{\equiv_{\mathrm{plac}}}$.

Recall that a Young diagram is the diagram of a partition: finitely
many rows of boxes with weakly decreasing lengths. We adopt the
French convention throughout, drawing rows with lengths weakly
decreasing from bottom to top. A \emph{semistandard Young
tableau (SSYT)} over $\mathbf{n}$ is then a filling of such a diagram by
elements of $\mathbf{n}$ such that entries weakly increase along each row (left
to right) and strictly increase up each column (bottom to top). There is
a bijection between $\mathrm{Plac}_n$ and the set of SSYT over $\mathbf{n}$,
i.e.\ every plactic class has a unique SSYT representative. In one
direction: given a word $w\in\mathbf{n}^*$, the SSYT representing its plactic
class is $P(w)$, obtained by the classical (Schensted) row-insertion
algorithm -- inserting the letters of $w$ one at a time into the empty
tableau, where each new letter either extends a row or \emph{bumps}
the smallest strictly larger entry of that row up into the row above.
We will not need further details of this classical construction. In
the other direction: given an SSYT $T$, a word representing its
plactic class is recovered by \emph{reading} $T$ -- either its
\emph{row word}, obtained by reading each row left to right and
concatenating the rows from top to bottom, or its \emph{column word},
obtained by reading each column top to bottom (necessarily a strictly
decreasing sequence of entries) and concatenating the columns from
left to right. The row word and column word of an SSYT are in general
different words, but -- since both represent the same plactic class --
they are always plactically equivalent.
More on the plactic monoid and Knuth equivalence can be found in \cite[Chapter~5]{Lothaire}.
\begin{remark}
Later in this paper we redefine SSYT
(Definition~\ref{def:ssyt-as-aligned}), in a form that is equivalent
to the classical one above but better suited to the framework of this
paper.
\end{remark}

\subsection{The stylic monoid}

The \emph{stylic monoid} $\mathrm{Styl}_n$, introduced by Abram and
Reutenauer \cite{AR}, is the further quotient of $\mathrm{Plac}_n$
obtained by imposing the idempotent relations
$x^2=x$ for every $x\in\mathbf{n}$; equivalently, $\mathrm{Styl}_n :=
\mathbf{n}^*/{\equiv_{\mathrm{styl}}}$, where $\equiv_{\mathrm{styl}}$ is the
congruence on $\mathbf{n}^*$ generated by both the Knuth relations and these
idempotent relations. Since $\equiv_{\mathrm{plac}}$ refines
$\equiv_{\mathrm{styl}}$,
there is a well-defined surjective monoid homomorphism
$\mathrm{Plac}_n\twoheadrightarrow\mathrm{Styl}_n$.

An \emph{$N$-tableau} over $\mathbf{n}$ is an SSYT over $\mathbf{n}$ with $m$ rows
$R_1,\ldots,R_m$ satisfying two extra conditions. First, each row is
strictly increasing. Such a row may be identified with the set of its
entries, with no loss of information, since the row lists this set in
increasing order. Under this identification, the second condition is
$R_i\subseteq R_{i-1}$ for every $1<i\le m$. Conversely, it is
noted in \cite[Section~6.1]{AR} that any finite sequence of rows
$R_1,\ldots,R_m$, each a finite strictly increasing sequence of
elements of $\mathbf{n}$, such that $R_i\subseteq R_{i-1}$ for every $1<i\le m$
and $\min R_1<\min R_2<\cdots<\min R_m$, yields a unique
$N$-tableau.

The set of $N$-tableaux over $\mathbf{n}$ is in bijection with
$\mathrm{Styl}_n$ \cite[Theorem~7.1]{AR}. In one direction, given a
word $w\in\mathbf{n}^*$ representing an element of $\mathrm{Styl}_n$, the
corresponding $N$-tableau is $N(w)$, computed by an insertion
algorithm whose details we will not need. Two words represent the
same element of $\mathrm{Styl}_n$ if and only if they produce the
same $N$-tableau under $N(\cdot)$.

In the other direction, given an $N$-tableau $T$, a word representing
the corresponding element of $\mathrm{Styl}_n$ is recovered exactly
as in the plactic case: either by reading its row word $\mathrm{row}(T)$
(read top row to bottom row, each row left to right), or by reading
its column word (read left column to right column, each column top to
bottom). Both recover $T$:
\[
N(\mathrm{row}(T)) = N(\mathrm{col}(T)) = T.
\]
The row-word equality is \cite[Lemma~7.5]{AR}; the column-word
equality then follows from
$\mathrm{row}(T)\equiv_{\mathrm{plac}} \mathrm{col}(T)$
(Section~\ref{sec:prelim-plactic} above) together with
$\equiv_{\mathrm{plac}}\ \subseteq\ \equiv_{\mathrm{styl}}$.

\section{Alignment of row arrays}
\label{sec:alignment}

\subsection{Positioning}

\begin{definition}[Row array]
\label{def:row-array}
A \emph{row} over a totally ordered alphabet $\mathbf{n}$ is a finite weakly
increasing word $R = x_1 \le x_2 \le \dots \le x_\ell$ with $x_i \in\mathbf{n}$
and $\ell\ge0$; we write $|R| := \ell$ for the \emph{length} of $R$, i.e.\
its number of entries. The case $\ell=0$, the \emph{empty row}
$R=\varnothing$, is allowed: the rows we actually care about are
essentially always nonempty, but permitting the empty row is convenient for some constructions later in the
paper (e.g. the full core of Definition~\ref{def:full-core}). A \emph{row array}
is a finite sequence $(R_1,\dots,R_m)$ of rows over $\mathbf{n}$. For each $i$,
write the entries of $R_i$ as
\[
R_{i,1} \le R_{i,2} \le \dots \le R_{i,|R_i|}.
\]
No relation between $R_i$ and $R_{i+1}$ (containment, size, etc.) is
assumed.
\end{definition}

We picture a row array as stacked bottom to top, with $R_1$ the
\emph{bottom} row and $R_m$ the \emph{top} row: the row index
increases as we go up.

\begin{definition}[Positioning]
\label{def:positioning}
Let $(R_1,\dots,R_m)$ be a row array. A \emph{positioning} is a family
of functions
\[
\rho_i : \{1,\dots,|R_i|\} \to \mathbb{N}, \qquad 2 \le i \le m,
\qquad 1 \le \rho_i(j) \le |R_{i-1}|,
\]
such that the following two conditions hold for every $i>1$:
\begin{enumerate}
    \item[(P1)] \textbf{(Value condition)} $R_{i-1,\, \rho_i(j)} < R_{i,j}$ for every $j$;
    \item[(P2)] \textbf{(Strict monotonicity)} $j < j' \implies \rho_i(j) < \rho_i(j')$.
\end{enumerate}
\end{definition}

\begin{corollary}
\label{cor:rho-dominates-id}
For every positioning $\rho$ on a row array $(R_1,\dots,R_m)$, every
$i>1$, and every $1\le j\le|R_i|$,
\[
\rho_i(j)\ge j.
\]
\end{corollary}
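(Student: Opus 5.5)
This is a direct consequence of the strict monotonicity condition (P2) together with the lower bound $\rho_i(j)\ge 1$ built into Definition~\ref{def:positioning}. The value condition (P1) plays no role. Fix $i>1$. I would prove $\rho_i(j)\ge j$ by induction on $j$, for $1\le j\le |R_i|$.

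For the base case $j=1$, the codomain restriction in the definition gives $\rho_i(1)\ge 1$.

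For the inductive step, suppose $\rho_i(j)\ge j$ with $j+1\le|R_i|$. Condition (P2) gives $\rho_i(j+1)>\rho_i(j)$. Since both values are integers, this yields $\rho_i(j+1)\ge \rho_i(j)+1\ge j+1$.

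Equivalently, one can argue by counting. By (P2) the values $\rho_i(1)<\cdots<\rho_i(j)$ are $j$ distinct positive integers. Hence the largest of them, $\rho_i(j)$, is at least $j$.

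There is no real obstacle here. The only point to note is that integrality is used to turn the strict inequality into a gain of at least one at each step. As a by-product, the condition $\rho_i(|R_i|)\le|R_{i-1}|$ forces $|R_i|\le|R_{i-1}|$ whenever a positioning exists.
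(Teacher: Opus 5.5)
Your proof is correct and is essentially the paper's argument: induction on $j$ using the strict monotonicity (P2), integrality, and the base case $\rho_i(1)\ge1$ from the codomain. Your closing remark that $|R_i|\le|R_{i-1}|$ follows is also right; the paper proves that separately as Lemma~\ref{lem:sizes-decrease}, via injectivity of $\rho_i$.
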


\begin{proof}
By (P2), $\rho_i$ is strictly increasing on $\{1,\dots,|R_i|\}$;
induction on $j$ gives $\rho_i(j)>\rho_i(j-1)\ge j-1$, so $\rho_i(j)\ge j$.
\end{proof}

\begin{definition}[Aligned row array]
\label{def:aligned-row-array}
An \emph{aligned row array} is a pair $(T,\rho)$ where $T=(R_1,\dots,R_m)$
is a row array and $\rho$ is a positioning on $T$.
\end{definition}

We think of an aligned row array as arranging its rows stacked one
above another, so that $\rho_i(j)$ records \emph{above which box of
row $i-1$} the box $(i,j)$ sits, and (P1) requires the entry in that
box below to be strictly smaller.

\begin{example}
\label{ex:aligned-picture}
Let $R_1=(1,2,3,4,7)$, $R_2=(4,7,8)$, $R_3=(9)$, with positioning
\[
\rho_2(1)=2,\ \rho_2(2)=4,\ \rho_2(3)=5, \qquad\qquad
\rho_3(1)=2.
\]
So $R_{2,1}=4$ sits above $R_{1,2}=2$, $R_{2,2}=7$ above $R_{1,4}=4$,
and $R_{2,3}=8$ above $R_{1,5}=7$; while $R_{3,1}=9$ sits above
$R_{2,2}=7$.
Figure~\ref{fig:aligned-picture} draws the three
rows stacked directly on top of each other, as in the row-indexing
convention.
\begin{figure}[H]
\centering
\begin{tikzpicture}[scale=0.9, every node/.style={font=\small}]
\foreach \i/\v in {1/1,2/2,3/3,4/4,5/7} {
  \draw (\i-1,0) rectangle (\i,1);
  \node at (\i-0.5,0.5) {$\v$};
}
\foreach \i/\v in {2/4,4/7,5/8} {
  \draw (\i-1,1) rectangle (\i,2);
  \node at (\i-0.5,1.5) {$\v$};
}
\foreach \i/\v in {4/9} {
  \draw (\i-1,2) rectangle (\i,3);
  \node at (\i-0.5,2.5) {$\v$};
}
\node[anchor=east] at (-0.3,0.5) {$R_1$};
\node[anchor=east] at (-0.3,1.5) {$R_2$};
\node[anchor=east] at (-0.3,2.5) {$R_3$};
\end{tikzpicture}
\caption{The aligned row array of Example~\ref{ex:aligned-picture}.}
\label{fig:aligned-picture}
\end{figure}
\end{example}

\begin{lemma}
\label{lem:sizes-decrease}
If a positioning exists on a row array $(R_1,\dots,R_m)$, then
$|R_1|\ge|R_2|\ge\cdots\ge|R_m|$.
\end{lemma}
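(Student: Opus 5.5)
The plan is to treat each consecutive pair of rows separately. Fix $i$ with $2\le i\le m$. The map $\rho_i:\{1,\dots,|R_i|\}\to\{1,\dots,|R_{i-1}|\}$ is strictly increasing by (P2), so it is injective. An injection from a set of size $|R_i|$ into a set of size $|R_{i-1}|$ forces $|R_i|\le|R_{i-1}|$. Ranging over all $i$ then gives the chain $|R_1|\ge|R_2|\ge\cdots\ge|R_m|$.

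One degenerate case needs a separate word: the empty row. If $R_i=\varnothing$, then $\rho_i$ is the empty function and the inequality $0\le|R_{i-1}|$ holds trivially. If $R_i\neq\varnothing$ but $R_{i-1}=\varnothing$, then no function $\rho_i$ with values in $\{1,\dots,|R_{i-1}|\}=\varnothing$ can exist. This contradicts the assumption that a positioning exists, so this case does not arise.

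Alternatively, Corollary~\ref{cor:rho-dominates-id} gives the bound directly. Apply it with $j=|R_i|$, assuming $R_i$ is nonempty. It yields $|R_i|\le\rho_i(|R_i|)\le|R_{i-1}|$, where the second inequality is the codomain constraint in Definition~\ref{def:positioning}. I would present this version, since it reuses the earlier corollary.

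There is no real obstacle here. The only point needing care is the empty-row bookkeeping just described. Note also that the value condition (P1) plays no role in the argument.
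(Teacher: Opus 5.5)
Your proposal is correct, and your first argument is exactly the paper's proof: (P2) makes $\rho_i:\{1,\dots,|R_i|\}\to\{1,\dots,|R_{i-1}|\}$ strictly increasing, hence injective, so $|R_i|\le|R_{i-1}|$. The variant via Corollary~\ref{cor:rho-dominates-id} is equally valid, since that corollary precedes the lemma and is itself derived from (P2), and your empty-row remarks are correct even though the paper does not spell them out.
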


\begin{proof}
Fix $i>1$ and let $\rho$ be a positioning on $(R_1,\dots,R_m)$. By
(P2), $\rho_i$ is a strictly increasing map
$\{1,\dots,|R_i|\}\to\{1,\dots,|R_{i-1}|\}$, hence injective, so
$|R_i|\le|R_{i-1}|$.
\end{proof}

\begin{definition}[Trivial positioning]
\label{def:trivial-positioning}
For a row array $(R_1,\dots,R_m)$, define $\rho^{\mathrm{tr}}_i(j)
:= j$ for $2\le i\le m$. Condition (P2) holds automatically. When
$\rho^{\mathrm{tr}}$ also satisfies (P1), i.e.\ $R_{i-1,j} < R_{i,j}$
for every $i>1$ and $j$, we call it \emph{the trivial positioning} on
$(R_1,\dots,R_m)$. In view of Lemma~\ref{lem:sizes-decrease}, it is
clear that the trivial positioning does not always exist.
\end{definition}

\begin{example}
\label{ex:trivial-picture}
With $R_1=(1,4,6,9,10)$, $R_2=(4,5,7)$, $R_3=(5,9)$,
$\rho^{\mathrm{tr}}$ is a valid positioning on $(R_1,R_2,R_3)$.
Figure~\ref{fig:trivial-picture} draws the three rows stacked directly
on top of each other: in trivial positioning, everything is
pushed as far left as possible.
\begin{figure}[H]
\centering
\begin{tikzpicture}[scale=0.9, every node/.style={font=\small}]
\foreach \i/\v in {1/1,2/4,3/6,4/9,5/10} {
  \draw (\i-1,0) rectangle (\i,1);
  \node at (\i-0.5,0.5) {$\v$};
}
\foreach \i/\v in {1/4,2/5,3/7} {
  \draw (\i-1,1) rectangle (\i,2);
  \node at (\i-0.5,1.5) {$\v$};
}
\foreach \i/\v in {1/5,2/9} {
  \draw (\i-1,2) rectangle (\i,3);
  \node at (\i-0.5,2.5) {$\v$};
}
\node[anchor=east] at (-0.3,0.5) {$R_1$};
\node[anchor=east] at (-0.3,1.5) {$R_2$};
\node[anchor=east] at (-0.3,2.5) {$R_3$};
\end{tikzpicture}
\caption{The trivial positioning of Example~\ref{ex:trivial-picture}.}
\label{fig:trivial-picture}
\end{figure}
\end{example}

\begin{lemma}
\label{lem:positioning-implies-trivial}
If a positioning $\rho$ exists on a row array $(R_1,\dots,R_m)$, then
the trivial positioning $\rho^{\mathrm{tr}}$ also exists on
$(R_1,\dots,R_m)$.
\end{lemma}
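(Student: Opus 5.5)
We need to show that $R_{i-1,j}<R_{i,j}$ for every $i>1$ and every $1\le j\le|R_i|$. The definition of the trivial positioning requires nothing else: (P2) is automatic, and each value $\rho^{\mathrm{tr}}_i(j)=j$ lies in the allowed range $1\le j\le|R_{i-1}|$ because $|R_i|\le|R_{i-1}|$ by Lemma~\ref{lem:sizes-decrease}. So the whole argument reduces to checking the value condition (P1) for $\rho^{\mathrm{tr}}$, one pair of consecutive rows at a time. Since both (P1) and (P2) constrain only rows $i-1$ and $i$, we fix $i>1$ and use only $\rho_i$.

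Fix $j\le|R_i|$. By Corollary~\ref{cor:rho-dominates-id} we have $\rho_i(j)\ge j$. Row $R_{i-1}$ is weakly increasing, so
\[
R_{i-1,j}\le R_{i-1,\rho_i(j)}.
\]
Condition (P1) for $\rho$ gives $R_{i-1,\rho_i(j)}<R_{i,j}$. Chaining the two inequalities yields $R_{i-1,j}<R_{i,j}$, which is (P1) for $\rho^{\mathrm{tr}}$.

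The only point needing care is that $\rho^{\mathrm{tr}}_i$ actually lands in $\{1,\dots,|R_{i-1}|\}$, and this is exactly what Lemma~\ref{lem:sizes-decrease} supplies. Beyond that there is no real obstacle: the argument is the observation that pushing each entry as far left as possible only places it above a weakly smaller entry of the row below.
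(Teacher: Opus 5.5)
Your proposal is correct and follows the paper's proof essentially step for step. Lemma~\ref{lem:sizes-decrease} makes $\rho^{\mathrm{tr}}_i$ well defined, and Corollary~\ref{cor:rho-dominates-id}, the weak monotonicity of $R_{i-1}$ and (P1) for $\rho$ together give $R_{i-1,j}\le R_{i-1,\rho_i(j)}<R_{i,j}$.
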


\begin{proof}
Fix $i>1$. By Lemma~\ref{lem:sizes-decrease},
$|R_i|\le|R_{i-1}|$, so $\rho^{\mathrm{tr}}_i(j):=j$ is a
well-defined map into $\{1,\dots,|R_{i-1}|\}$, and (P2) holds trivially
for it. By Corollary~\ref{cor:rho-dominates-id}, $\rho_i(j)\ge j$
for every $j$ and since $R_{i-1}$ is weakly increasing, $R_{i-1,j} \le
R_{i-1,\rho_i(j)}$. By (P1) for $\rho$, $R_{i-1,\rho_i(j)} <
R_{i,j}$ so this proves $R_{i-1,j} < R_{i,j}$, which is (P1) for
$\rho^{\mathrm{tr}}$.
\end{proof}

We phrase the next definition in a form somewhat different from the
usual one, adapted to this framework of row arrays and positionings.

\begin{definition}[SSYT]
\label{def:ssyt-as-aligned}
A \emph{semistandard Young tableau (SSYT)} is a row array
$T=(R_1,\dots,R_m)$ with no empty rows, for which the trivial
positioning $\rho^{\mathrm{tr}}$ (Definition~\ref{def:trivial-positioning})
exists. By Lemma~\ref{lem:positioning-implies-trivial}, this is
equivalent to $T$ having no empty rows and admitting some positioning
at all.
\end{definition}

\begin{remark}
This matches the classical picture of Section~\ref{sec:preliminaries}:
rows are already weakly increasing (Definition~\ref{def:row-array}),
and admitting $\rho^{\mathrm{tr}}$ is exactly what forces a genuine
Young-diagram shape, filled consistently with it.
\end{remark}

\begin{definition}[Order on positionings]
\label{def:positioning-order}
Let $T=(R_1,\dots,R_m)$ be a row array. For positionings $\rho,\rho'$
on $T$, say $\rho \le \rho'$ if $\rho_i(j)\le\rho'_i(j)$ for every
$i>1$ and $1\le j\le|R_i|$ (the pointwise order). This is a partial
order on the set of positionings on $T$.
\end{definition}

\begin{corollary}
Whenever it exists, $\rho^{\mathrm{tr}}$ is the minimum positioning
on $T$ with respect to $\le$: $\rho^{\mathrm{tr}}\le\rho$ for every
positioning $\rho$ on $T$.
\end{corollary}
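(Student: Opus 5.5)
This follows directly from Corollary~\ref{cor:rho-dominates-id} once we check that $\rho^{\mathrm{tr}}$ is a legitimate element of the poset. The plan has two steps: confirm that $\rho^{\mathrm{tr}}$ lies in the set of positionings on $T$, and then compare it pointwise with an arbitrary positioning.

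For the first step, the hypothesis ``whenever it exists'' means precisely that $\rho^{\mathrm{tr}}$ satisfies (P1). By Definition~\ref{def:trivial-positioning}, (P2) holds automatically. For it to be a well-defined positioning we also need $|R_i|\le|R_{i-1}|$, so that $j\le|R_{i-1}|$. This is built into the notion of existence. Alternatively, whenever any positioning exists it follows from Lemma~\ref{lem:sizes-decrease}.

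For the second step, fix an arbitrary positioning $\rho$ on $T$, an index $i>1$, and $1\le j\le|R_i|$. Corollary~\ref{cor:rho-dominates-id} gives $\rho_i(j)\ge j=\rho^{\mathrm{tr}}_i(j)$. Since $i$ and $j$ are arbitrary, this is exactly $\rho^{\mathrm{tr}}\le\rho$ in the pointwise order of Definition~\ref{def:positioning-order}.

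Being below every element and being an element itself, $\rho^{\mathrm{tr}}$ is the minimum of the poset. There is no real obstacle here. The only point needing care is the first step: the comparison requires $\rho^{\mathrm{tr}}$ to actually be a positioning on $T$, which is exactly what the hypothesis provides.
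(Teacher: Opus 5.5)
Your proof is correct and matches the paper's argument: the paper also fixes $i>1$ and $1\le j\le|R_i|$ and applies Corollary~\ref{cor:rho-dominates-id} to get $\rho^{\mathrm{tr}}_i(j)=j\le\rho_i(j)$. Your extra check that $\rho^{\mathrm{tr}}$ is itself a positioning is supplied by the hypothesis ``whenever it exists'', and the paper leaves that step implicit.
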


\begin{proof}
Fix $i>1$ and $1\le j\le|R_i|$. By Corollary~\ref{cor:rho-dominates-id},
$\rho_i(j)\ge j$, i.e.\ $\rho^{\mathrm{tr}}_i(j)=j\le\rho_i(j)$.
\end{proof}

\subsection{Columns in aligned row arrays}

\begin{definition}[Column]
\label{def:column}
Let $(T,\rho)$ be an aligned row array, $T=(R_1,\dots,R_m)$
(Definition~\ref{def:aligned-row-array}). Define
\[
\psi(i,j) := \bigl(i-1,\, \rho_i(j)\bigr), \qquad 2\le i\le m,\ 1\le j\le|R_i|,
\]
a map sending positions in rows $2,\dots,m$ to positions in rows
$1,\dots,m-1$, and let $\psi^k$ denote its $k$-fold iterate
($\psi^0(i,j):=(i,j)$), defined for as long as the row index stays
above $1$. Since $\rho_i$ is injective for every $i$ (P2), so
is $\psi$, and hence so is every iterate $\psi^k$.

Every position $(i,j)$ of $T$ has a well-defined forward orbit under
$\psi$ reaching row $1$ after exactly $i-1$ steps; writing
$\psi^{i-1}(i,j)=(1,r)$, define $C(i,j):=r$, the
\emph{column} of $(i,j)$ -- depending, of course, on the alignment
$\rho$. Since $\psi^{i-1}(i,j)=\psi^{i-2}(\psi(i,j))$ for $i\ge2$, a
position lies in the same column as its image under $\psi$:
\[
C(\psi(i,j))=C(i,j), \qquad 2\le i\le m.
\]
For $1\le r\le|R_1|$, write
\[
C_r := \bigl\{\, (i,j) : C(i,j)=r \,\bigr\}
\]
for the set of positions in column $r$. By injectivity of the
iterates, $C_r$ contains at most one position from each row, and
it contains exactly one from row $1$, namely $(1,r)$ itself.
Every position of $T$ lies in exactly one column, so
$C_1,\dots,C_{|R_1|}$ partition the positions of $T$.
\end{definition}

A word of caution for what follows: a position $(i,j)$ carries two
different notions of horizontal placement -- its index $j$ within its
own row, and the index $r=C(i,j)$ of the column containing it --
and the two need not agree. We will need to keep this distinction in
mind throughout.

\begin{definition}[Column as a function of row]
\label{def:column-value}
Let $C_r$ be a column. If $(i,j)\in C_r$
for some $i>1$, then $\psi(i,j)=(i-1,\rho_i(j))\in C_r$ too; so, by
downward induction, $C_r$ has exactly one member in \emph{every} row from $1$
up to its topmost row, and none above it -- in particular, $|C_r|$ is
exactly that topmost row. For $1\le i\le|C_r|$, write $C_r(i)$ for the
\emph{value} of $C_r$'s member in row $i$: if $(i,j)\in C_r$, then
$C_r(i):=R_{i,j}$.
\end{definition}

Before defining column word, we record for contrast that a row array
by itself already has a well-defined row word, with no need for a
positioning.

\begin{definition}[Row word]
The \emph{row word} of a row array $T=(R_1,\dots,R_m)$ is
\[
\mathrm{row}(T) \;:=\; R_m\,R_{m-1}\cdots R_1,
\]
the concatenation of its rows, read from the top row down to the
bottom row.
\end{definition}

\begin{definition}[Column word]
\label{def:column-word}
Let $(T,\rho)$ be an aligned row array, $T=(R_1,\dots,R_m)$. The
\emph{word of a column} $C_r$, denoted $w(C_r)$, is its
sequence of entries, read from its topmost row down to row $1$; by
(P1) this word is strictly decreasing. The \emph{column word} of
$(T,\rho)$ is
\[
\mathrm{col}(T,\rho) \;:=\; w(C_1)\,w(C_2)\cdots w(C_{|R_1|}),
\]
the concatenation of the words of the columns, in order. Unlike the
row word, the column word depends on $\rho$: a row array by itself
does not say which entries of adjacent rows belong to the same
column, so it has no column word -- only an aligned row array does.
\end{definition}

\begin{remark}
The column word $\mathrm{col}(T)$ used informally in
Section~\ref{sec:preliminaries} for the physical column reading of a
Young-diagram tableau is exactly $\mathrm{col}(T,\rho^{\mathrm{tr}})$
in the notation above: it is easy to see that
the trivial positioning places entry $(i,j)$ in column $j$, exactly
where it physically sits in the diagram.
\end{remark}

\begin{example}[Column word depends on the alignment]
Let $R_1=(1,3,3,5)$ and $R_2=(5,7)$; this row array admits several
positionings. We compare two of them:
the trivial positioning $\rho^{\mathrm{tr}}$, with
$\rho^{\mathrm{tr}}_2(1)=1$, $\rho^{\mathrm{tr}}_2(2)=2$; and
$\rho$, with $\rho_2(1)=3$, $\rho_2(2)=4$.

For $\rho^{\mathrm{tr}}$ (Figure~\ref{fig:column-word-triv}):
$C_1=\bigl((2,1),(1,1)\bigr)$, $C_2=\bigl((2,2),(1,2)\bigr)$,
$C_3=\bigl((1,3)\bigr)$, $C_4=\bigl((1,4)\bigr)$, with words $51$,
$73$, $3$, $5$, so
\[
\mathrm{col}(T,\rho^{\mathrm{tr}}) = 51\,73\,3\,5.
\]
\begin{figure}[H]
\centering
\begin{tikzpicture}[scale=0.9, every node/.style={font=\small}]
\foreach \i/\v in {1/1,2/3,3/3,4/5} {
  \draw (\i-1,0) rectangle (\i,1);
  \node at (\i-0.5,0.5) {$\v$};
}
\foreach \i/\v in {1/5,2/7} {
  \draw (\i-1,1) rectangle (\i,2);
  \node at (\i-0.5,1.5) {$\v$};
}
\node[anchor=east] at (-0.3,0.5) {$R_1$};
\node[anchor=east] at (-0.3,1.5) {$R_2$};
\end{tikzpicture}
\caption{The trivial positioning $\rho^{\mathrm{tr}}$.}
\label{fig:column-word-triv}
\end{figure}

For $\rho$ (Figure~\ref{fig:column-word-other}): $C_1=\bigl((1,1)\bigr)$,
$C_2=\bigl((1,2)\bigr)$, $C_3=\bigl((2,1),(1,3)\bigr)$,
$C_4=\bigl((2,2),(1,4)\bigr)$, with words $1$, $3$, $53$, $75$, so
\[
\mathrm{col}(T,\rho) = 1\,3\,53\,75.
\]
\begin{figure}[H]
\centering
\begin{tikzpicture}[scale=0.9, every node/.style={font=\small}]
\foreach \i/\v in {1/1,2/3,3/3,4/5} {
  \draw (\i-1,0) rectangle (\i,1);
  \node at (\i-0.5,0.5) {$\v$};
}
\foreach \i/\v in {3/5,4/7} {
  \draw (\i-1,1) rectangle (\i,2);
  \node at (\i-0.5,1.5) {$\v$};
}
\node[anchor=east] at (-0.3,0.5) {$R_1$};
\node[anchor=east] at (-0.3,1.5) {$R_2$};
\end{tikzpicture}
\caption{The positioning $\rho$.}
\label{fig:column-word-other}
\end{figure}
\end{example}

\subsection{Properties of columns}

In this subsection we prove several technical facts about columns that
will be needed later.

\begin{lemma}[Columns under an initial trivial segment]
\label{lem:trivial-column-identity}
Let $(T,\rho)$ be an aligned row array with
$T=(R_1,\dots,R_m)$, and fix $i\ge1$. If $\rho_2=\rho^{\mathrm{tr}}_2,\ \ldots,\
\rho_i=\rho^{\mathrm{tr}}_i$ (in particular, if
$\rho=\rho^{\mathrm{tr}}$), then $C(i,j)=j$ for every
$1\le j\le|R_i|$.
\end{lemma}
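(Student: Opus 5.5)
The plan is a straightforward induction on $i$, using only the definition of $\psi$ and of $C$ (Definition~\ref{def:column}). For the base case $i=1$, there is nothing to assume about $\rho$. Here $\psi^{0}(1,j)=(1,j)$, so by definition $C(1,j)=j$ for every $1\le j\le|R_1|$.

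For the inductive step, assume $2\le i\le m$ and $\rho_2=\rho^{\mathrm{tr}}_2,\dots,\rho_i=\rho^{\mathrm{tr}}_i$. The hypothesis for $i-1$ (namely $\rho_2=\rho^{\mathrm{tr}}_2,\dots,\rho_{i-1}=\rho^{\mathrm{tr}}_{i-1}$) is then contained in the current one. So the induction hypothesis gives $C(i-1,j')=j'$ for every $1\le j'\le|R_{i-1}|$.

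Now fix $1\le j\le|R_i|$. Since $\rho_i=\rho^{\mathrm{tr}}_i$, we get $\psi(i,j)=(i-1,\rho_i(j))=(i-1,j)$. This position is legitimate: $\rho_i$ maps into $\{1,\dots,|R_{i-1}|\}$ by Definition~\ref{def:positioning}, so $j\le|R_{i-1}|$. Using the identity $C(\psi(i,j))=C(i,j)$ recorded in Definition~\ref{def:column}, we obtain
\[
C(i,j)=C(\psi(i,j))=C(i-1,j)=j,
\]
which completes the induction.

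The parenthetical case $\rho=\rho^{\mathrm{tr}}$ follows immediately, since it satisfies the hypothesis for every $i\le m$. There is no real obstacle. The only point needing care is that the induction hypothesis applies, that is, that $(i-1,j)$ is a genuine position of row $i-1$, and this is guaranteed by the codomain of $\rho_i$.
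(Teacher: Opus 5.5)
Your proof is correct and is essentially the paper's argument. The paper simply iterates $\psi(k,j')=(k-1,j')$ for $2\le k\le i$ to get $\psi^{i-1}(i,j)=(1,j)$ directly, which is your induction on $i$ unrolled.
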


\begin{proof}
By Definition~\ref{def:column},
\[
\psi(k,j')=\bigl(k-1,\rho^{\mathrm{tr}}_k(j')\bigr)=(k-1,j'),
\qquad 2\le k\le i,\ 1\le j'\le|R_k|.
\]
Iterating from $(i,j)$ gives $\psi^{i-1}(i,j)=(1,j)$, i.e.\
$C(i,j)=j$.
\end{proof}

\begin{lemma}[Columns respect order within a row]
\label{lem:column-order}
Let $(T,\rho)$ be an aligned row array, $T=(R_1,\dots,R_m)$. Fix a row
$i$ and indices $1\le j<j'\le|R_i|$. Then $C(i,j)<C(i,j')$.
\end{lemma}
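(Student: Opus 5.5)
I will prove this by induction on the row index $i$, using that $\psi$ acts on a row by a strictly increasing map.

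\emph{Base case $i=1$.} By Definition~\ref{def:column}, $C(1,j)=j$ for every $j$, since $\psi^{0}(1,j)=(1,j)$. Hence $C(1,j)=j<j'=C(1,j')$ and there is nothing more to check.

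\emph{Inductive step.} Let $i\ge2$ and assume the statement holds for row $i-1$. Take $1\le j<j'\le|R_i|$. By (P2) of Definition~\ref{def:positioning}, $\rho_i(j)<\rho_i(j')$. Both of these are valid indices in row $i-1$, since $1\le\rho_i(\cdot)\le|R_{i-1}|$. The induction hypothesis applied to row $i-1$ gives
\[
C\bigl(i-1,\rho_i(j)\bigr)<C\bigl(i-1,\rho_i(j')\bigr).
\]
The identity $C(\psi(i,j))=C(i,j)$ from Definition~\ref{def:column}, together with $\psi(i,j)=(i-1,\rho_i(j))$, turns the left side into $C(i,j)$. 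In the same way the right side equals $C(i,j')$, so $C(i,j)<C(i,j')$.

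Equivalently, one can phrase this directly: the composite $j\mapsto C(i,j)$ is $\rho_2\circ\rho_3\circ\cdots\circ\rho_i$ applied to $j$. A composition of strictly increasing maps is strictly increasing.

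There is no real obstacle here. The only point needing care is to apply (P2) in row $i$ itself, and the induction hypothesis in row $i-1$, rather than mixing up the within-row index $j$ with the column index.
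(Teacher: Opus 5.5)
Your proof is correct and is essentially the paper's argument: induction on $i$, with $C(1,j)=j$ as the base case and, in the inductive step, (P2) in row $i$ combined with $C(i,j)=C(i-1,\rho_i(j))$ and the induction hypothesis in row $i-1$. Your closing remark that $j\mapsto C(i,j)$ equals $\rho_2\circ\cdots\circ\rho_i$ is also accurate and is just the unrolled form of the same induction.
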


\begin{proof}
Induction on $i$. For $i=1$, $C(1,j)=j<j'=C(1,j')$ by definition. For
$i>1$, by Definition~\ref{def:column},
\[
C(i,j)=C(i-1,\rho_i(j)), \qquad C(i,j')=C(i-1,\rho_i(j')).
\]
By (P2), $\rho_i(j)<\rho_i(j')$, so the induction hypothesis applied to
row $i-1$ gives $C(i-1,\rho_i(j))<C(i-1,\rho_i(j'))$. Hence
$C(i,j)<C(i,j')$.
\end{proof}

\begin{lemma}[An equal entry in the row below sits to the right]
\label{lem:equal-entry-right}
Let $(T,\rho)$ be an aligned row array. Fix $i>1$ and columns
$C_r,C_{r'}$ such that $C_r(i)$ and $C_{r'}(i-1)$ are both defined.
If $C_r(i)=C_{r'}(i-1)$, then $r<r'$.
\end{lemma}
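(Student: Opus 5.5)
Let $(i,j)$ be the position in row $i$ belonging to $C_r$, so $C_r(i)=R_{i,j}$ and $C(i,j)=r$. Let $(i-1,j')$ be the position in row $i-1$ belonging to $C_{r'}$, so $C_{r'}(i-1)=R_{i-1,j'}$ and $C(i-1,j')=r'$. The plan is to compare the row-index $j'$ with $\rho_i(j)$, the index of the box directly below $(i,j)$. Then we transfer the resulting inequality of row indices in row $i-1$ into an inequality of column indices, using Lemma~\ref{lem:column-order}.

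First, by Definition~\ref{def:column}, $r=C(i,j)=C(\psi(i,j))=C(i-1,\rho_i(j))$. Next we apply (P1): $R_{i-1,\rho_i(j)}<R_{i,j}=C_r(i)=C_{r'}(i-1)=R_{i-1,j'}$. Since the row $R_{i-1}$ is weakly increasing, a strict inequality $R_{i-1,\rho_i(j)}<R_{i-1,j'}$ forces $\rho_i(j)<j'$. This holds because $j'\le\rho_i(j)$ would give $R_{i-1,j'}\le R_{i-1,\rho_i(j)}$, contradicting the strict inequality.

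Finally, Lemma~\ref{lem:column-order} applied in row $i-1$ to the indices $\rho_i(j)<j'$ gives $C(i-1,\rho_i(j))<C(i-1,j')$, that is, $r<r'$.

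There is no real obstacle here. The only point needing care is the one flagged in the paper's caution: we must not confuse the row-index $j'$ with the column index $r'$. That is why the argument passes through row indices within row $i-1$ and only converts them to columns at the last step, via Lemma~\ref{lem:column-order}.
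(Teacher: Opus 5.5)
Your proof is correct and follows the paper's argument essentially step for step. Like the paper, you use (P1) and the weak monotonicity of $R_{i-1}$ to get $j'>\rho_i(j)$, and then apply Lemma~\ref{lem:column-order} in row $i-1$ together with $C(i,j)=C(i-1,\rho_i(j))$ to conclude $r<r'$.
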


\begin{proof}
Let $j$ be the position with $(i,j)\in C_r$, so $R_{i,j}=C_r(i)$, and
let $j'$ be the position with $(i-1,j')\in C_{r'}$, so
$R_{i-1,j'}=C_{r'}(i-1)$; thus $R_{i,j}=R_{i-1,j'}$.

By (P1), $R_{i-1,\rho_i(j)}<R_{i,j}=R_{i-1,j'}$. Since $R_{i-1}$ is
weakly increasing, this rules out $j'\le\rho_i(j)$; so $j'>\rho_i(j)$.

Hence, by Definition~\ref{def:column} and
Lemma~\ref{lem:column-order} applied to row $i-1$,
$r=C(i,j)=C(i-1,\rho_i(j))<C(i-1,j')=r'$.
\end{proof}

\begin{corollary}[An equal entry in a lower row sits to the right]
\label{cor:equal-entry-right}
Let $(T,\rho)$ be an aligned row array, and let a letter $a$ occur in
row $i$, in column $C_r$, and in row $i'<i$, in column $C_{r'}$. If
$a$ occurs in every row between $i'$ and $i$, then $r<r'$.
\end{corollary}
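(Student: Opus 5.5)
The plan is induction on $i-i'$, using Lemma~\ref{lem:equal-entry-right} at each step and Lemma~\ref{lem:column-order} to compare occurrences of $a$ within one row.

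First, a technical point. A row may contain $a$ several times, since rows are only weakly increasing. I therefore read the hypothesis as follows: the given occurrences are in $C_r$ (row $i$) and $C_{r'}$ (row $i'$), and for each intermediate row $k$ with $i'<k<i$ we choose some occurrence of $a$, lying in a column $C_{r_k}$. Set $r_i:=r$ and $r_{i'}:=r'$.

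For each $k$ with $i'<k\le i$, the value $C_{r_k}(k)$ is defined and equals $a$. Likewise $C_{r_{k-1}}(k-1)$ is defined and equals $a$. Lemma~\ref{lem:equal-entry-right}, applied with row index $k>1$ (note $k\ge i'+1\ge 2$), gives $r_k<r_{k-1}$. Chaining these strict inequalities from $k=i$ down to $k=i'+1$ yields
\[
r=r_i<r_{i-1}<\cdots<r_{i'}=r'.
\]
This proves the claim. When $i=i'+1$ the chain is a single application of the lemma.

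The proof needs no comparison between different occurrences of $a$ in the same row, because Lemma~\ref{lem:equal-entry-right} applies to any pair of cells carrying the value $a$ in adjacent rows. The main obstacle is therefore only bookkeeping. One must check that each chosen cell gives a column whose value at that row is defined, which holds since the cell lies in its column by Definition~\ref{def:column}. One must also make sure the chain starts and ends at the prescribed columns $C_r$ and $C_{r'}$, not at other occurrences of $a$ in rows $i$ or $i'$; this is arranged by fixing $r_i=r$ and $r_{i'}=r'$ at the ends.
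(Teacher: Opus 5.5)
Your proof is correct and is essentially the paper's argument: the paper inducts on $i-i'$, picking an occurrence of $a$ in row $i-1$ and combining one application of Lemma~\ref{lem:equal-entry-right} with the induction hypothesis, which is your chain $r=r_i<r_{i-1}<\cdots<r_{i'}=r'$ unrolled. Your opening mention of Lemma~\ref{lem:column-order} is unnecessary, as you yourself go on to note.
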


\begin{proof}
Induction on $i-i'$; for $i-i'=1$ this is
Lemma~\ref{lem:equal-entry-right}. Otherwise, take any occurrence of
$a$ in row $i-1$, in a column $C_{r_1}$: by
Lemma~\ref{lem:equal-entry-right}, $r<r_1$, and by induction,
$r_1<r'$.
\end{proof}

\section{Column words are plactically equivalent}
\label{sec:col-plactic}

Recall from Section~\ref{sec:preliminaries} the Knuth relations (K1),
(K2) and plactic equivalence $\equiv_{\mathrm{plac}}$ on words over a
totally ordered alphabet. Throughout this section, we write $\equiv$
for $\equiv_{\mathrm{plac}}$: we use it often, and dropping the
subscript simplifies notation.

The goal of this section is to show that the column word of a row
array is, up to plactic (Knuth) equivalence, independent of the
positioning used to compute it (Theorem~\ref{thm:col-invariant},
Corollary~\ref{cor:col-well-defined}). We take this as a good sign
for the notion of positioning itself: the trivial positioning is not
privileged, and a positioning better suited to the problem at hand
may be used instead without changing the resulting plactic class.
This freedom is essential in Section~\ref{sec:catalan-image}, where a
non-trivial positioning is the convenient one.

\begin{lemma}[Sliding Lemma]
\label{lem:sliding}
Let $a_1<a_2<\cdots<a_\ell\le b$. Then
\[
a_\ell\cdots a_2a_1\,b \;\equiv\; a_\ell\,b\,a_{\ell-1}\cdots a_1.
\]
\end{lemma}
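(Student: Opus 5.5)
We argue by induction on $\ell$, moving $b$ leftward past the letters $a_1,a_2,\dots,a_{\ell-1}$ one at a time. Each move uses relation (K1) or (K2), with the letter just to the left of the moving pair serving as the third letter.

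For $\ell=1$ there is nothing to prove, since both sides equal $a_1b$. For $\ell=2$ we must show $a_2a_1b\equiv a_2ba_1$. This is exactly (K1), $yxz\equiv yzx$, with $y=a_2$, $x=a_1$, $z=b$: the hypotheses $x<y\le z$ read $a_1<a_2\le b$, which hold.

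For the general step, assume the statement for $\ell-1$. Apply it to the suffix: the letters $a_2<\cdots<a_\ell\le b$ satisfy the hypothesis with length $\ell-1$. That is the wrong suffix to use, however, so instead we work on the rightmost part of the word first. Consider the factor $a_2a_1b$ sitting at the end of $a_\ell\cdots a_2a_1b$. Since $a_1<a_2\le b$, (K1) rewrites it as $a_2ba_1$. Because $\equiv$ is a congruence, this gives
\[
a_\ell\cdots a_3\,a_2a_1b\equiv a_\ell\cdots a_3\,a_2b\,a_1 .
\]
The prefix $a_\ell\cdots a_2b$ is the left-hand side of the statement for the letters $a_2<\cdots<a_\ell\le b$, which has length $\ell-1$. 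By the induction hypothesis it is equivalent to $a_\ell\,b\,a_{\ell-1}\cdots a_2$. Appending $a_1$ and again using that $\equiv$ is a congruence, we obtain $a_\ell\,b\,a_{\ell-1}\cdots a_2a_1$, as claimed.

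The only point requiring care is checking that each application of (K1) has the correct strict/weak inequalities. At every step the triple is $a_{k+1}a_kb\mapsto a_{k+1}ba_k$, with $a_k<a_{k+1}$ strict, because the $a$'s strictly increase, and $a_{k+1}\le b$ weak. This is precisely the shape $x<y\le z$ required by (K1), so (K2) is never needed.
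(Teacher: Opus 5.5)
Your proof is correct and is essentially the paper's argument: both move $b$ leftward through the same chain of (K1) moves $a_{k+1}a_kb\mapsto a_{k+1}ba_k$. The only difference is bookkeeping: the paper applies the induction hypothesis to the suffix $a_{\ell-1}\cdots a_1b$ and finishes with one (K1) at the front, whereas you start with one (K1) at the back and apply the hypothesis to the prefix $a_\ell\cdots a_2b$. The false-start sentences about ``the wrong suffix'' play no role and should be deleted.
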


\begin{proof}
Induction on $\ell$; trivial for $\ell=1$. For $\ell>1$: since $a_1<\cdots<a_{\ell-1}\le b$,
the induction hypothesis applied to $a_1,\ldots,a_{\ell-1}$ and $b$ gives
$a_{\ell-1}\cdots a_1\,b\equiv a_{\ell-1}\,b\,a_{\ell-2}\cdots a_1$, so, prepending $a_\ell$,
\[
a_\ell\cdots a_1\,b \;\equiv\; a_\ell\,a_{\ell-1}\,b\,a_{\ell-2}\cdots a_1.
\]
Now apply (K1) to the first three letters, with $y=a_\ell,\ x=a_{\ell-1},\
z=b$: since $a_{\ell-1}<a_\ell\le b$, this gives $a_\ell a_{\ell-1}b\equiv a_\ell ba_{\ell-1}$, hence
\[
a_\ell\,a_{\ell-1}\,b\,a_{\ell-2}\cdots a_1 \;\equiv\; a_\ell\,b\,a_{\ell-1}\,a_{\ell-2}\cdots a_1. \qedhere
\]
\end{proof}

\begin{lemma}[Swap Lemma, base case $k=1$]
\label{lem:swap-base}
Let $\ell\ge1$. Let $a_1<\cdots<a_\ell$ and $b_1<\cdots<b_\ell$ be increasing
words with $a_i\le b_i$ for every $i$, and let $c>a_\ell$ and $c>b_\ell$.
Then
\[
c\,a_\ell\cdots a_1\,b_\ell\cdots b_1 \;\equiv\; a_\ell\cdots a_1\,c\,b_\ell\cdots b_1.
\]
\end{lemma}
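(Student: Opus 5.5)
The plan is to prove a slightly stronger statement by induction, because the natural inductive step produces configurations in which the two decreasing blocks have different lengths. Concretely, I would prove:

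\emph{Claim.} Let $1\le k\le \ell$, $a_1<\cdots<a_k$, $b_1<\cdots<b_\ell$, with $a_{k-j}\le b_{\ell-j}$ for $0\le j<k$ (the $a$'s right-aligned against the $b$'s). Let $c>a_k$ and $c>b_\ell$. Then
\[
c\,a_k\cdots a_1\,b_\ell\cdots b_1\;\equiv\;a_k\cdots a_1\,c\,b_\ell\cdots b_1 .
\]
Lemma~\ref{lem:swap-base} is the case $k=\ell$. The argument is by induction on $k$, with $\ell$ arbitrary.

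\textbf{Base case $k=1$.} Since $a_1\le b_\ell<c$, relation (K2) with $x=a_1$, $y=b_\ell$, $z=c$ gives $c\,a_1\,b_\ell\equiv a_1\,c\,b_\ell$. Appending $b_{\ell-1}\cdots b_1$ finishes this case.

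\textbf{Inductive step $k>1$.} I would carry out four moves.

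First, since $a_1<\cdots<a_k\le b_\ell$, the Sliding Lemma~\ref{lem:sliding} gives
\[
a_k\cdots a_1\,b_\ell\equiv a_k\,b_\ell\,a_{k-1}\cdots a_1 ,
\]
so the left side becomes $c\,a_k\,b_\ell\,a_{k-1}\cdots a_1\,b_{\ell-1}\cdots b_1$.

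Second, since $a_k\le b_\ell<c$, (K2) turns the prefix $c\,a_k\,b_\ell$ into $a_k\,c\,b_\ell$.

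Third, I apply the induction hypothesis with $b_\ell$ in the role of $c$ to the segment $b_\ell\,a_{k-1}\cdots a_1\,b_{\ell-1}\cdots b_1$. This segment has lengths $k-1\le \ell-1$, and the hypotheses hold: $b_\ell>a_{k-1}$ (as $a_{k-1}<a_k\le b_\ell$), $b_\ell>b_{\ell-1}$, and $a_{k-1-j}\le b_{\ell-1-j}$ is the original alignment condition. The segment is therefore equivalent to $a_{k-1}\cdots a_1\,b_\ell\,b_{\ell-1}\cdots b_1$, and the whole word becomes
\[
a_k\,c\,a_{k-1}\cdots a_1\,b_\ell\cdots b_1 .
\]

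Fourth, I apply the induction hypothesis once more, now with $c$ itself and the blocks $a_{k-1}\cdots a_1$ and $b_\ell\cdots b_1$ (lengths $k-1\le\ell$). The needed inequalities are $c>a_{k-1}$, $c>b_\ell$, and $a_{k-1-j}\le b_{\ell-j}$; the last follows from $a_{k-1-j}<a_{k-j}\le b_{\ell-j}$. This yields $a_k\,a_{k-1}\cdots a_1\,c\,b_\ell\cdots b_1$, as required. Since both plactic equivalences are congruences, applying them to factors in the middle of a word is legitimate.

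\textbf{Main obstacle.} The hardest part is choosing the right generalization. In the original equal-length form, the intermediate word $a_k\,c\,a_{k-1}\cdots a_1\,b_\ell\cdots b_1$ asks us to move $c$ past $k-1$ letters that are followed by $\ell$ letters, which is no longer an instance of the lemma. Once the statement is phrased with right-aligned comparisons $a_{k-j}\le b_{\ell-j}$ and $k\le\ell$, both recursive calls fall inside the induction on $k$. What remains is to verify carefully that the hypotheses transfer in the third and fourth moves, as checked above.
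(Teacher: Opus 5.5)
Your proof is correct and uses the same four moves as the paper: the Sliding Lemma, then (K2), then the inductive hypothesis with $b_\ell$ in the role of $c$, then the inductive hypothesis with $c$ itself. Your generalization to unequal block lengths is harmless but unnecessary, because the paper does the fourth move with the equal-length hypothesis applied to the prefix $c\,a_{\ell-1}\cdots a_1\,b_\ell\cdots b_2$ (taking $b'_i:=b_{i+1}$, valid since $b_{i+1}>b_i\ge a_i$) and then appends the leftover $b_1$; the obstacle you describe disappears once you split off a trailing suffix by congruence, which is exactly what your right-aligned formulation encodes.
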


\begin{proof}
Induction on $\ell$.

For $\ell=1$: the word $a_1cb_1$ satisfies $a_1\le b_1<c$, so (K2) gives
directly $ca_1b_1 \equiv a_1cb_1$.

For $\ell>1$, assume the lemma holds for $\ell-1$; explicitly, for any
increasing words $a'_1<\cdots<a'_{\ell-1}$ and $b'_1<\cdots<b'_{\ell-1}$ with
$a'_i\le b'_i$ for every $i$, and any $c'>a'_{\ell-1}$, $c'>b'_{\ell-1}$,
\[
c'\,a'_{\ell-1}\cdots a'_1\,b'_{\ell-1}\cdots b'_1 \;\equiv\;
a'_{\ell-1}\cdots a'_1\,c'\,b'_{\ell-1}\cdots b'_1.
\]
We invoke this induction hypothesis twice below, each time with its
own substitution for $a',b',c'$. Start from the word
$c\,a_\ell\cdots a_1\,b_\ell\cdots b_1$.

\emph{(a)} Since $a_\ell\le b_\ell$, the Sliding Lemma
(Lemma~\ref{lem:sliding}) gives
$a_\ell\cdots a_1\,b_\ell\equiv a_\ell\,b_\ell\,a_{\ell-1}\cdots a_1$, so, prepending $c$ and
appending $b_{\ell-1}\cdots b_1$,
\[
c\,a_\ell\cdots a_1\,b_\ell\cdots b_1 \;\equiv\;
c\,a_\ell\,b_\ell\,a_{\ell-1}\cdots a_1\,b_{\ell-1}\cdots b_1.
\]

\emph{(b)} Apply (K2) to the first three letters, valid since
$a_\ell\le b_\ell<c$. This gives $c\,a_\ell\,b_\ell\equiv a_\ell\,c\,b_\ell$, so
\[
c\,a_\ell\,b_\ell\,a_{\ell-1}\cdots a_1\,b_{\ell-1}\cdots b_1 \;\equiv\;
a_\ell\,c\,b_\ell\,a_{\ell-1}\cdots a_1\,b_{\ell-1}\cdots b_1.
\]

\emph{(c)} Apply the induction hypothesis to the subword
$b_\ell\,a_{\ell-1}\cdots a_1\,b_{\ell-1}\cdots b_1$, via
\[
a'_i:=a_i, \qquad b'_i:=b_i \qquad (i=1,\ldots,\ell-1), \qquad
c':=b_\ell,
\]
valid since $b_\ell$ exceeds every one of $a_{\ell-1},\ldots,a_1,b_{\ell-1},\ldots,b_1$
(as $b_\ell\ge a_\ell>a_{\ell-1}>\cdots>a_1$ and $b_\ell>b_{\ell-1}>\cdots>b_1$), and since
$a_i\le b_i$ for $i=1,\ldots,\ell-1$ holds by hypothesis. This
gives
\[
b_\ell\,a_{\ell-1}\cdots a_1\,b_{\ell-1}\cdots b_1 \;\equiv\;
a_{\ell-1}\cdots a_1\,b_\ell\,b_{\ell-1}\cdots b_1,
\]
so, prepending $a_\ell\,c$,
\[
a_\ell\,c\,b_\ell\,a_{\ell-1}\cdots a_1\,b_{\ell-1}\cdots b_1 \;\equiv\;
a_\ell\,c\,a_{\ell-1}\cdots a_1\,b_\ell\,b_{\ell-1}\cdots b_1.
\]

\emph{(d)} Apply the induction hypothesis once more, to the subword
$c\,a_{\ell-1}\cdots a_1\,b_\ell\cdots b_2$, via
\[
a'_i:=a_i, \qquad b'_i:=b_{i+1} \qquad (i=1,\ldots,\ell-1), \qquad c':=c,
\]
(so $b_1$ is left over). This is valid: $c$
exceeds every one of $a_{\ell-1},\ldots,a_1,b_\ell,\ldots,b_2$, and for
$i=1,\ldots,\ell-1$,
\[
b_{i+1} \;>\; b_i \;\ge\; a_i,
\]
using $b_i\ge a_i$ (by hypothesis) together with $b_{i+1}>b_i$;
so the required domination $b_{i+1}\ge a_i$ holds. This gives
\[
c\,a_{\ell-1}\cdots a_1\,b_\ell\cdots b_2 \;\equiv\;
a_{\ell-1}\cdots a_1\,c\,b_\ell\cdots b_2,
\]
so, prepending $a_\ell$ and appending the leftover $b_1$,
\[
a_\ell\,c\,a_{\ell-1}\cdots a_1\,b_\ell\cdots b_2\,b_1 \;\equiv\;
a_\ell\cdots a_1\,c\,b_\ell\cdots b_2\,b_1.
\]

Chaining (a)--(d) gives $c\,a_\ell\cdots a_1\,b_\ell\cdots b_1 \equiv
a_\ell\cdots a_1\,c\,b_\ell\cdots b_1$. \qedhere
\end{proof}

\begin{lemma}[Swap Lemma]
\label{lem:swap}
Let $\ell,k\ge1$. Let $a_1<\cdots<a_\ell$ and $b_1<\cdots<b_\ell$ be increasing
words with $a_i\le b_i$ for every $i$, and let $c_1<\cdots<c_k$
satisfy $c_1>a_\ell$ and $c_1>b_\ell$ (so every $c_s$ exceeds every
$a_i,b_i$). Then
\[
c_k\cdots c_1\;a_\ell\cdots a_1\;b_\ell\cdots b_1 \;\equiv\;
a_\ell\cdots a_1\;c_k\cdots c_1\;b_\ell\cdots b_1.
\]
\end{lemma}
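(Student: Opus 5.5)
Induction on $k$, with the base case $k=1$ being exactly Lemma~\ref{lem:swap-base}. The idea is to peel off the letters $c_1,c_2,\ldots,c_k$ one at a time, moving each across the block $a_\ell\cdots a_1$. The innermost letter $c_1$ is adjacent to that block, so it goes first.

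Assume $k\ge2$. Consider the subword $c_1\,a_\ell\cdots a_1\,b_\ell\cdots b_1$ sitting inside $c_k\cdots c_2\,c_1\,a_\ell\cdots a_1\,b_\ell\cdots b_1$. Since $c_1>a_\ell$ and $c_1>b_\ell$, Lemma~\ref{lem:swap-base} applies to it and gives
\[
c_1\,a_\ell\cdots a_1\,b_\ell\cdots b_1\equiv a_\ell\cdots a_1\,c_1\,b_\ell\cdots b_1 .
\]
Because $\equiv$ is a congruence, we may prepend $c_k\cdots c_2$ and obtain
\[
c_k\cdots c_2\,a_\ell\cdots a_1\,c_1\,b_\ell\cdots b_1 .
\]

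Next we move the block $c_k\cdots c_2$ across $a_\ell\cdots a_1$. The obstacle is that the letter $c_1$ now sits between the $a$'s and the $b$'s. To get around this, regard $c_1\,b_\ell\cdots b_1$ as a new decreasing word of length $\ell+1$, and pad the $a$'s to length $\ell+1$ as well. The cleanest way is to apply the induction hypothesis not with the original $b$'s but with the substitution
\[
b'_i:=b_{i+1}\ (1\le i\le\ell-1),\qquad b'_\ell:=c_1,
\]
leaving $b_1$ over. This is the same shift trick used in step (d) of Lemma~\ref{lem:swap-base}. It is valid because:
\begin{itemize}
\item[] $b'_\ell=c_1>a_\ell$, and $b_{i+1}>b_i\ge a_i$ gives $b'_i\ge a_i$ for $i<\ell$;
\item[] the $b'$ are increasing, since $b_\ell<c_1$;
\item[] each of $c_2,\ldots,c_k$ exceeds $c_1=b'_\ell$ and exceeds $a_\ell$, so the hypotheses on the $c$'s hold.
\end{itemize}
The induction hypothesis for $k-1$ letters, applied to the subword $c_k\cdots c_2\,a_\ell\cdots a_1\,c_1\,b_\ell\cdots b_2$ and followed by appending $b_1$, gives
\[
c_k\cdots c_2\,a_\ell\cdots a_1\,c_1\,b_\ell\cdots b_1 \equiv a_\ell\cdots a_1\,c_k\cdots c_2\,c_1\,b_\ell\cdots b_1 ,
\]
which is the claim.

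The main point to check is exactly this reindexing, which absorbs $c_1$ into the $b$-block. The key conditions are $c_2>b'_\ell=c_1$, which uses the strictness $c_1<c_2$, and the domination $b'_i\ge a_i$. Once these are verified, the induction closes with no further Knuth moves beyond those already in Lemma~\ref{lem:swap-base}.
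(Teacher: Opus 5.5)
Your proof is correct and is essentially the paper's own argument: induction on $k$, first moving $c_1$ across the $a$-block by Lemma~\ref{lem:swap-base}, then applying the induction hypothesis to the prefix $c_k\cdots c_2\,a_\ell\cdots a_1\,c_1\,b_\ell\cdots b_2$ with the shifted substitution $b'_i=b_{i+1}$, $b'_\ell=c_1$, $c'_i=c_{i+1}$, leaving $b_1$ over. Your remark about padding the $a$'s to length $\ell+1$ is never used and can simply be dropped.
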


\begin{proof}
Induction on $k$. The case $k=1$ is exactly Lemma~\ref{lem:swap-base}.
Assume the lemma holds for $k-1$; explicitly, for any increasing
$a'_1<\cdots<a'_\ell$ and $b'_1<\cdots<b'_\ell$ with $a'_i\le b'_i$ for every $i$,
and any $c'_1<\cdots<c'_{k-1}$ with $c'_1>a'_\ell$, $c'_1>b'_\ell$,
\[
c'_{k-1}\cdots c'_1\,a'_\ell\cdots a'_1\,b'_\ell\cdots b'_1 \;\equiv\;
a'_\ell\cdots a'_1\,c'_{k-1}\cdots c'_1\,b'_\ell\cdots b'_1.
\]
We show the lemma for $k$ in two steps: first we push $c_1$ inward,
and then $c_k\cdots c_2$:
\[
c_k\cdots c_2\,c_1\,a_\ell\cdots a_1\,b_\ell\cdots b_1
\;\overset{(1)}{\equiv}\;
c_k\cdots c_2\,a_\ell\cdots a_1\,c_1\,b_\ell\cdots b_1
\;\overset{(2)}{\equiv}\;
a_\ell\cdots a_1\,c_k\cdots c_2\,c_1\,b_\ell\cdots b_1.
\]

\emph{Step (1).} Apply Lemma~\ref{lem:swap-base} to the suffix
$c_1\,a_\ell\cdots a_1\,b_\ell\cdots b_1$, with $c:=c_1$; this is valid
since $c_1>a_\ell$ and $c_1>b_\ell$.

\emph{Step (2).} Apply the induction hypothesis to the prefix
$c_k\cdots c_2\,a_\ell\cdots a_1\,c_1\,b_\ell\cdots b_2$ (so $b_1$ is left
over), via
\[
a'_i:=a_i\ (i=1,\ldots,\ell), \qquad
b'_i:=b_{i+1}\ (i=1,\ldots,\ell-1),\ \ b'_\ell:=c_1, \qquad
c'_i:=c_{i+1}\ (i=1,\ldots,k-1).
\]
This is valid: $b'_1<\cdots<b'_\ell$ is strictly increasing (as $b_\ell<c_1$);
$c'_1=c_2$ exceeds both $a'_\ell=a_\ell$ and $b'_\ell=c_1$ (as $c_2>c_1>a_\ell$); and
$b'_i\ge a'_i$ for every $i$ -- for $i=1,\ldots,\ell-1$,
$b'_i=b_{i+1}>b_i\ge a_i=a'_i$, and $b'_\ell=c_1\ge a_\ell=a'_\ell$ by hypothesis.
\end{proof}

\begin{definition}[Excess]
\label{def:excess}
Let $\rho$ be a positioning on $T=(R_1,\ldots,R_m)$. The \emph{excess}
of $\rho$ is
\[
\mathrm{exc}(\rho) := \sum_{i=2}^{m}\sum_{j=1}^{|R_i|}
\bigl(\rho_i(j)-j\bigr).
\]
By Corollary~\ref{cor:rho-dominates-id}, $\rho_i(j)\ge j$
always, so $\mathrm{exc}(\rho)\ge0$, with equality iff
$\rho=\rho^{\mathrm{tr}}$.
\end{definition}

\begin{lemma}[Elementary covering step]
\label{lem:covering-step}
Let $\rho\ne\rho^{\mathrm{tr}}$ be a positioning on
$T=(R_1,\ldots,R_m)$. Since $\rho\ne\rho^{\mathrm{tr}}$, some row
$i>1$ has $\rho_i\ne\rho^{\mathrm{tr}}_i$; let $i_0$ be
the smallest such row (the deviating row closest to the bottom). Note
that $i_0\ge2$. Since
$\rho_{i_0}(j)\ge j$ for every $j$ (Corollary~\ref{cor:rho-dominates-id}),
$\rho_{i_0}\ne\rho^{\mathrm{tr}}_{i_0}$ means $\rho_{i_0}(j)>j$ for some $j$;
let $j_0$ be the smallest such $j$, and set $t:=\rho_{i_0}(j_0)$. Define
$\rho'$ by $\rho'_{i_0}(j_0):=t-1$ and $\rho':=\rho$ elsewhere. Then:
\begin{enumerate}
\item $t>j_0$, and no $j\ne j_0$ in row $i_0$ satisfies
$\rho_{i_0}(j)=t-1$ or $\rho'_{i_0}(j)=t$;
\item $\rho'$ is a positioning on $T$ with
$\mathrm{exc}(\rho')=\mathrm{exc}(\rho)-1$
(Definition~\ref{def:excess});
\item $\rho'_i=\rho^{\mathrm{tr}}_i$ for every $i<i_0$.
\end{enumerate}
\end{lemma}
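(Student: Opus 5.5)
The plan is to verify the three items in order, using only Corollary~\ref{cor:rho-dominates-id}, the strict monotonicity (P2), and the weak increase of $R_{i_0-1}$.

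\emph{Item 1.} By definition of $j_0$ we have $t=\rho_{i_0}(j_0)>j_0$. By minimality of $j_0$ and Corollary~\ref{cor:rho-dominates-id}, $\rho_{i_0}(j)=j$ for every $j<j_0$. Hence for $j<j_0$ we get $\rho_{i_0}(j)=j\le j_0-1<t-1$; the last inequality holds since $t\ge j_0+1$, so $t-1\ge j_0>j_0-1$. For $j>j_0$, (P2) gives $\rho_{i_0}(j)>t>t-1$. So no $j\ne j_0$ has $\rho_{i_0}(j)=t-1$. Since $\rho'_{i_0}(j)=\rho_{i_0}(j)$ for $j\ne j_0$, the same case split shows $\rho'_{i_0}(j)\ne t$: for $j<j_0$ the value is at most $j_0-1<t$, and for $j>j_0$ it exceeds $t$.

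\emph{Item 2.} The only things to check concern row $i_0$ at index $j_0$.
\begin{enumerate}
\item \emph{Range.} We need $1\le t-1\le |R_{i_0-1}|$, which follows from $t-1\ge j_0\ge1$ and $t-1<t\le|R_{i_0-1}|$.
\item \emph{(P2).} For $j<j_0$ we have $\rho'_{i_0}(j)=j\le j_0-1<j_0\le t-1$. For $j>j_0$ we have $\rho'_{i_0}(j)>t>t-1$. So strict monotonicity is preserved.
\item \emph{(P1).} This is the substantive point, but it is immediate: $R_{i_0-1}$ is weakly increasing, so $R_{i_0-1,t-1}\le R_{i_0-1,t}<R_{i_0,j_0}$, using (P1) for $\rho$.
\end{enumerate}
All other values of $\rho'$ coincide with those of $\rho$, so (P1) and (P2) hold there unchanged. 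The excess drops by exactly one, since only the single summand $\rho_{i_0}(j_0)-j_0$ changes, decreasing by $1$.

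\emph{Item 3.} This holds because $\rho'$ differs from $\rho$ only in row $i_0$, and by minimality of $i_0$ we have $\rho_i=\rho^{\mathrm{tr}}_i$ for all $i<i_0$.

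There is no real obstacle here. The only delicate point is item 1 (and the (P2) check built from the same case split), which rests on the minimality of $j_0$ forcing $\rho_{i_0}(j)=j$ for $j<j_0$. Item 1 is what will later let the column structure be tracked through the step, but for the lemma itself it is just this inequality bookkeeping.
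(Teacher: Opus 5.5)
Your proof is correct and follows essentially the same route as the paper: minimality of $j_0$ forces $\rho_{i_0}(j)=j$ for $j<j_0$, which together with (P2) gives item~1 and the (P2) check, and (P1) at $(i_0,j_0)$ follows from the weak increase of $R_{i_0-1}$. The differences are cosmetic: you verify the range condition $1\le t-1\le|R_{i_0-1}|$ explicitly, which the paper leaves implicit, and you rule out $\rho'_{i_0}(j)=t$ by the same case split rather than by injectivity of $\rho_{i_0}$.
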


\begin{proof}
Recall $t=\rho_{i_0}(j_0)>j_0$ directly by the choice of $j_0$. Also
note that $\rho_{i_0}(j)=j$ for every $j<j_0$: minimality of $j_0$
rules out $\rho_{i_0}(j)>j$, so $\rho_{i_0}(j)\ge j$
(Corollary~\ref{cor:rho-dominates-id}) forces equality.

For (1), it remains to check that no $j\ne j_0$ satisfies $\rho_{i_0}(j)=t-1$ or
$\rho'_{i_0}(j)=t$: for $j<j_0$, $\rho_{i_0}(j)=j<j_0\le t-1$ (using
the note above and $t>j_0$ just shown), so
$\rho_{i_0}(j)\ne t-1$; for $j>j_0$, strict monotonicity of
$\rho_{i_0}$ gives $\rho_{i_0}(j)>\rho_{i_0}(j_0)=t>t-1$, so again
$\rho_{i_0}(j)\ne t-1$. This rules out $\rho_{i_0}(j)=t-1$ for every
$j\ne j_0$. Likewise, since $\rho'_{i_0}(j)=\rho_{i_0}(j)$ for every
$j\ne j_0$, and $\rho_{i_0}$ is injective (being strictly
increasing) with $\rho_{i_0}(j_0)=t$, no $j\ne j_0$ has
$\rho_{i_0}(j)=t$, hence none has $\rho'_{i_0}(j)=t$ either.

We now prove (2) by checking that $\rho'$ satisfies the two
defining properties (P1), (P2) of a positioning
(Definition~\ref{def:positioning}).

(P1): for $i\ne i_0$, $\rho'_i=\rho_i$, so (P1) holds
there because it holds for $\rho$; at $i=i_0$, likewise for $j\ne
j_0$, $\rho'_{i_0}(j)=\rho_{i_0}(j)$, so (P1) holds there too because it
holds for $\rho$, and it remains to check $j=j_0$. Since $R_{i_0-1}$
is weakly increasing and $\rho'_{i_0}(j_0)<\rho_{i_0}(j_0)$,
\[
R_{i_0-1,\,\rho'_{i_0}(j_0)} \;\le\; R_{i_0-1,\,\rho_{i_0}(j_0)}.
\]
And by (P1) for $\rho$,
\[
R_{i_0-1,\,\rho_{i_0}(j_0)} \;<\; R_{i_0,j_0}.
\]
So $R_{i_0-1,\,\rho'_{i_0}(j_0)}<R_{i_0,j_0}$, i.e.\ (P1) holds for
$\rho'$ at $(i_0,j_0)$ as well.

(P2): for $i\ne i_0$, $\rho'_i=\rho_i$, so (P2) holds
there because it holds for $\rho$; at $i=i_0$, for adjacent pairs not
involving $j_0$, $\rho'$ agrees with $\rho$, so monotonicity holds
there too because it holds for $\rho$, and it remains to check the
pairs adjacent to $j_0$. On the left, if $j_0>1$ then
$\rho'_{i_0}(j_0)=t-1\ge j_0>j_0-1=\rho_{i_0}(j_0-1)=\rho'_{i_0}(j_0-1)$
(using $t>j_0$ from (1), and $\rho_{i_0}(j_0-1)=j_0-1$ by minimality of $j_0$, as noted at the
start of the proof); on the right, if $j_0<|R_{i_0}|$ then
$\rho'_{i_0}(j_0)<t=\rho_{i_0}(j_0)<\rho_{i_0}(j_0+1)=\rho'_{i_0}(j_0+1)$. So
$\rho'_{i_0}$ is strictly increasing.

So $\rho'$ is a positioning, proving (2);
$\mathrm{exc}(\rho')=\mathrm{exc}(\rho)-1$ is immediate from the
definition. Item (3) holds since $i_0$ is minimal and $\rho'$ agrees
with $\rho$ outside $(i_0,j_0)$.
\end{proof}

\begin{lemma}[The covering step's column]
\label{lem:covering-step-column}
With $\rho,\rho',i_0,j_0,t$ as in Lemma~\ref{lem:covering-step}
(so $i_0\ge2$), let
\[
a_i:=R_{i,\,t-1}, \qquad b_i:=R_{i,\,t} \qquad
(i=1,\ldots,i_0-1).
\]
Then:
\begin{enumerate}
\item Under both $\rho$ and $\rho'$, $C_{t-1}(i)=a_i$ and $C_t(i)=b_i$ for $i=1,\ldots,i_0-1$;
\item $(i_0,j_0)$ lies in $C_t$ under $\rho$, and in $C_{t-1}$
under $\rho'$.
\end{enumerate}
\end{lemma}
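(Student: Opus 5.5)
Both items follow from Lemma~\ref{lem:covering-step}(3) together with Lemma~\ref{lem:trivial-column-identity}; what remains is bookkeeping of indices.

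\emph{Item (1).} By Lemma~\ref{lem:covering-step}(3), $\rho_i=\rho'_i=\rho^{\mathrm{tr}}_i$ for all $2\le i<i_0$. Both $\rho$ and $\rho'$ therefore satisfy the hypothesis of Lemma~\ref{lem:trivial-column-identity} up to row $i_0-1$. (For $\rho$, $i_0$ is by definition the smallest deviating row; $\rho'$ agrees with $\rho$ below row $i_0$.) Hence, for every $i\le i_0-1$ and every $1\le j\le|R_i|$, under both positionings we have $C(i,j)=j$.

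Before applying this, I must check that $R_{i,t-1}$ and $R_{i,t}$ exist, i.e.\ that $t\le|R_i|$ for $i\le i_0-1$. By definition of a positioning, $t=\rho_{i_0}(j_0)\le|R_{i_0-1}|$. By Lemma~\ref{lem:sizes-decrease}, $|R_{i_0-1}|\le|R_i|$ for all $i\le i_0-1$. Also $t-1\ge j_0\ge1$, since $t>j_0$ by Lemma~\ref{lem:covering-step}(1). So both positions are valid.

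With this, $(i,t-1)\in C_{t-1}$ and $(i,t)\in C_t$. By Definition~\ref{def:column-value}, this gives $C_{t-1}(i)=R_{i,t-1}=a_i$ and $C_t(i)=R_{i,t}=b_i$.

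\emph{Item (2).} By Definition~\ref{def:column}, the column of $(i_0,j_0)$ equals the column of $\psi(i_0,j_0)$, so $C(i_0,j_0)=C(i_0-1,\rho_{i_0}(j_0))$.
\begin{itemize}
\item Under $\rho$: $\rho_{i_0}(j_0)=t$, so $C(i_0,j_0)=C(i_0-1,t)=t$ by the identity $C(i,j)=j$ just established.
\item Under $\rho'$: $\rho'_{i_0}(j_0)=t-1$, so $C(i_0,j_0)=C(i_0-1,t-1)=t-1$ by the same identity, which also holds for $\rho'$.
\end{itemize}

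The only step needing care is the index-range check above, namely $1\le t-1<t\le|R_i|$ for every $i\le i_0-1$. Everything else is a direct application of the earlier lemmas.
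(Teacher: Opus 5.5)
Your proof is correct and follows the paper's own argument: minimality of $i_0$ (for $\rho$) and Lemma~\ref{lem:covering-step}(3) (for $\rho'$) let you apply Lemma~\ref{lem:trivial-column-identity} in rows below $i_0$, giving $C(i,j)=j$. Both items then follow by following $\psi$ from $(i_0,j_0)$ down to $(i_0-1,t)$ or $(i_0-1,t-1)$. Your explicit check that $1\le t-1<t\le|R_i|$ for $i<i_0$, via Lemma~\ref{lem:sizes-decrease}, is a detail the paper leaves implicit.
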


\begin{proof}
By minimality of $i_0$, $\rho_2=\rho^{\mathrm{tr}}_2,\ \ldots,\
\rho_{i_0-1}=\rho^{\mathrm{tr}}_{i_0-1}$; by
Lemma~\ref{lem:covering-step}(3), the same holds for $\rho'$. So
Lemma~\ref{lem:trivial-column-identity} gives $C(i,j)=j$ under
both $\rho$ and $\rho'$, for every row $i<i_0$.

Applying this with index $t-1$, at rows $i=1,\ldots,i_0-1$, gives
$(i,t-1)\in C_{t-1}$, i.e.\ $C_{t-1}(i)=R_{i,t-1}=a_i$, for every such
$i$. Applying it with index $t$ likewise gives $C_t(i)=b_i$. This
proves (1).

For (2): $\rho_{i_0}(j_0)=t$, so $(i_0,j_0)$ lies in the same column
as $(i_0-1,t)$ (Definition~\ref{def:column}), which is $C_t$ by the
fact above; likewise
$\rho'_{i_0}(j_0)=t-1$ puts $(i_0,j_0)$ in the same column as
$(i_0-1,t-1)$, i.e.\ $C_{t-1}$.
\end{proof}

\begin{lemma}[Local effect of a covering step]
\label{lem:local-effect}
With $\rho,\rho',i_0,j_0,t$ as in Lemma~\ref{lem:covering-step}, let
$a_i,b_i$ ($i=1,\ldots,i_0-1$) be as in
Lemma~\ref{lem:covering-step-column}. By that lemma, $(i_0,j_0)$ lies
in $C_t$ under $\rho$ and in $C_{t-1}$ under $\rho'$; since
$\rho,\rho'$ agree above row $i_0$, these two columns share the same
entries from row $i_0$ up to their topmost row -- let
$c_1<\cdots<c_k$ denote them (so $k\ge1$ and $c_1=R_{i_0,j_0}$),
diverging only below row $i_0$, as described in (2) below (see
Figure~\ref{fig:covering-step}). Then:
\begin{enumerate}
\item $a_1<\cdots<a_{i_0-1}$, $b_1<\cdots<b_{i_0-1}$, $a_i\le b_i$
for every $i$, and every $c_s$ exceeds every $a_i,b_i$;
\item Under $\rho$, we have $w(C_{t-1})=a_{i_0-1}\cdots a_1$ and
$w(C_t)=c_k\cdots c_1\,b_{i_0-1}\cdots b_1$. Under $\rho'$,
$w(C_{t-1})=c_k\cdots c_1\,a_{i_0-1}\cdots a_1$ and
$w(C_t)=b_{i_0-1}\cdots b_1$. Every
other column of $(T,\rho)$ equals the corresponding column of
$(T,\rho')$.
\end{enumerate}
\end{lemma}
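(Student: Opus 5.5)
The plan is to verify the two items separately. Item (1) is mostly order bookkeeping on the trivially positioned lower rows. Item (2) comes from tracking how the only changed map value, at $(i_0,j_0)$, reroutes one column fragment.

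For (1), strict increase of $a_1<\cdots<a_{i_0-1}$ follows from (P1) applied to the trivial positioning in rows $2,\dots,i_0-1$. By Lemma~\ref{lem:covering-step-column}(1) together with minimality of $i_0$, $\rho_i(t-1)=t-1$ for $2\le i\le i_0-1$, so (P1) gives $R_{i-1,t-1}<R_{i,t-1}$, i.e.\ $a_{i-1}<a_i$. The same argument gives $b_{i-1}<b_i$. The inequality $a_i\le b_i$ is just weak increase of the row $R_i$ at indices $t-1<t$. For the $c$'s, the column word is strictly decreasing, so every $c_s\ge c_1$. Under $\rho$ the cell $(i_0,j_0)$ sits above $(i_0-1,t)$, so (P1) gives $c_1>b_{i_0-1}$, which dominates all $b_i$, and also dominates $a_{i_0-1}\ge a_i$.

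One point needs care here: the indices $t-1,t$ must exist in rows $1,\dots,i_0-1$. This holds because $\rho_{i_0}(j_0)=t\le|R_{i_0-1}|$, and the row lengths are weakly decreasing (Lemma~\ref{lem:sizes-decrease}).

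For (2), I would first describe the column words under $\rho$. In rows $<i_0$ the columns are the index columns (Lemma~\ref{lem:trivial-column-identity}), so $C_{t-1}$ and $C_t$ contain $(i,t-1)$ and $(i,t)$ respectively for $i<i_0$. The column $C_t$ also continues upward through $(i_0,j_0)$ with entries $c_1,\dots,c_k$. For $C_{t-1}$, I need that no cell of row $i_0$ maps to index $t-1$ under $\rho$; this is exactly Lemma~\ref{lem:covering-step}(1). So $C_{t-1}$ stops at row $i_0-1$, giving $w(C_{t-1})=a_{i_0-1}\cdots a_1$. Under $\rho'$, Lemma~\ref{lem:covering-step}(1) again says no cell of row $i_0$ maps to $t$, so $C_t$ now stops at row $i_0-1$. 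Meanwhile $(i_0,j_0)$ and everything above it, whose $\psi$-orbits down to row $i_0$ are unchanged since $\rho'=\rho$ above row $i_0$, lands in $C_{t-1}$.

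It remains to show every other column is unchanged. Here I would argue via the map $\psi$. The maps $\psi$ and $\psi'$ differ only at $(i_0,j_0)$, so any position whose forward orbit avoids $(i_0,j_0)$ has the same column under both. The positions whose orbits pass through $(i_0,j_0)$ are exactly $(i_0,j_0)$ and the cells above it in its column, i.e.\ the $c$-cells. The remaining cells of $C_{t-1}$ and $C_t$ are the lower cells, which are fixed by the trivial segment. Hence all columns other than $C_{t-1},C_t$ have identical member sets, and so identical words.

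The main obstacle is making this last \enquote{only the $c$-chain moves} claim precise. I expect to handle it with the injectivity of $\psi$: the set of positions whose orbit meets $(i_0,j_0)$ is precisely the column segment above it, so it coincides for $\rho$ and $\rho'$.
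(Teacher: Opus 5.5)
Your proposal is correct and follows essentially the same route as the paper. Item (1) comes from (P1) on the trivially positioned rows below $i_0$ together with weak increase of the rows, and item (2) comes from Lemma~\ref{lem:covering-step}(1), which determines which of $C_{t-1},C_t$ carries the chain $c_1,\dots,c_k$ under each positioning. The differences are only in detail: you get $c_1>a_i$ via $c_1>b_{i_0-1}\ge a_{i_0-1}$ rather than by applying (P1) to $\rho'$. You also spell out two things the paper leaves implicit or states in one line: that the indices $t-1,t$ exist in the lower rows (Lemma~\ref{lem:sizes-decrease}), and, via $\psi$-orbits, that all other columns are unaffected.
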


\begin{figure}[h]
\centering
\begin{tikzpicture}[scale=0.85, every node/.style={font=\small}]

\draw (0,-3) rectangle (1,0);
\draw (0,-2) -- (1,-2); \draw (0,-1) -- (1,-1);
\node at (0.5,-0.5) {$*$}; \node at (0.5,-1.5) {$\vdots$}; \node at (0.5,-2.5) {$*$};

\draw (1,-3) rectangle (2,3);
\draw (1,-2) -- (2,-2); \draw (1,-1) -- (2,-1); \draw (1,0) -- (2,0);
\draw (1,1) -- (2,1); \draw (1,2) -- (2,2);
\node at (1.5,2.5) {$*$}; \node at (1.5,1.5) {$*$}; \node at (1.5,0.5) {$*$};
\node at (1.5,-0.5) {$*$}; \node at (1.5,-1.5) {$\vdots$}; \node at (1.5,-2.5) {$*$};

\draw (2,-3) rectangle (3,0);
\draw (2,-2) -- (3,-2); \draw (2,-1) -- (3,-1);
\node at (2.5,-0.5) {$a_{i_0-1}$}; \node at (2.5,-1.5) {$\vdots$}; \node at (2.5,-2.5) {$a_1$};

\draw (3,-3) rectangle (4,4);
\draw (3,-2) -- (4,-2); \draw (3,-1) -- (4,-1); \draw (3,0) -- (4,0);
\draw (3,1) -- (4,1); \draw (3,2) -- (4,2); \draw (3,3) -- (4,3);
\node at (3.5,3.5) {$c_k$}; \node at (3.5,2.5) {$\vdots$};
\node at (3.5,1.5) {$c_2$}; \node at (3.5,0.5) {$c_1$};
\node at (3.5,-0.5) {$b_{i_0-1}$}; \node at (3.5,-1.5) {$\vdots$}; \node at (3.5,-2.5) {$b_1$};

\draw (4,-3) rectangle (5,1);
\draw (4,-2) -- (5,-2); \draw (4,-1) -- (5,-1); \draw (4,0) -- (5,0);
\node at (4.5,0.5) {$*$}; \node at (4.5,-0.5) {$*$};
\node at (4.5,-1.5) {$\vdots$}; \node at (4.5,-2.5) {$*$};

\draw (5,-3) rectangle (6,2);
\draw (5,-2) -- (6,-2); \draw (5,-1) -- (6,-1); \draw (5,0) -- (6,0); \draw (5,1) -- (6,1);
\node at (5.5,1.5) {$*$}; \node at (5.5,0.5) {$*$}; \node at (5.5,-0.5) {$*$};
\node at (5.5,-1.5) {$\vdots$}; \node at (5.5,-2.5) {$*$};

\node[anchor=south east] at (3,4.3) {$C_{t-1}$};
\node[anchor=south west] at (3,4.3) {$C_t$};

\node[anchor=east] at (-0.2,0.5) {$R_{i_0}$};
\node[anchor=east] at (-0.2,-0.5) {$R_{i_0-1}$};
\node[anchor=east] at (-0.2,-2.5) {$R_1$};

\node at (3,-3.6) {$(T,\rho)$};

\end{tikzpicture}
\hspace{1.5cm}
\begin{tikzpicture}[scale=0.85, every node/.style={font=\small}]

\draw (0,-3) rectangle (1,0);
\draw (0,-2) -- (1,-2); \draw (0,-1) -- (1,-1);
\node at (0.5,-0.5) {$*$}; \node at (0.5,-1.5) {$\vdots$}; \node at (0.5,-2.5) {$*$};

\draw (1,-3) rectangle (2,3);
\draw (1,-2) -- (2,-2); \draw (1,-1) -- (2,-1); \draw (1,0) -- (2,0);
\draw (1,1) -- (2,1); \draw (1,2) -- (2,2);
\node at (1.5,2.5) {$*$}; \node at (1.5,1.5) {$*$}; \node at (1.5,0.5) {$*$};
\node at (1.5,-0.5) {$*$}; \node at (1.5,-1.5) {$\vdots$}; \node at (1.5,-2.5) {$*$};

\draw (2,-3) rectangle (3,4);
\draw (2,-2) -- (3,-2); \draw (2,-1) -- (3,-1); \draw (2,0) -- (3,0);
\draw (2,1) -- (3,1); \draw (2,2) -- (3,2); \draw (2,3) -- (3,3);
\node at (2.5,3.5) {$c_k$}; \node at (2.5,2.5) {$\vdots$};
\node at (2.5,1.5) {$c_2$}; \node at (2.5,0.5) {$c_1$};
\node at (2.5,-0.5) {$a_{i_0-1}$}; \node at (2.5,-1.5) {$\vdots$}; \node at (2.5,-2.5) {$a_1$};

\draw (3,-3) rectangle (4,0);
\draw (3,-2) -- (4,-2); \draw (3,-1) -- (4,-1);
\node at (3.5,-0.5) {$b_{i_0-1}$}; \node at (3.5,-1.5) {$\vdots$}; \node at (3.5,-2.5) {$b_1$};

\draw (4,-3) rectangle (5,1);
\draw (4,-2) -- (5,-2); \draw (4,-1) -- (5,-1); \draw (4,0) -- (5,0);
\node at (4.5,0.5) {$*$}; \node at (4.5,-0.5) {$*$};
\node at (4.5,-1.5) {$\vdots$}; \node at (4.5,-2.5) {$*$};

\draw (5,-3) rectangle (6,2);
\draw (5,-2) -- (6,-2); \draw (5,-1) -- (6,-1); \draw (5,0) -- (6,0); \draw (5,1) -- (6,1);
\node at (5.5,1.5) {$*$}; \node at (5.5,0.5) {$*$}; \node at (5.5,-0.5) {$*$};
\node at (5.5,-1.5) {$\vdots$}; \node at (5.5,-2.5) {$*$};

\node[anchor=south east] at (3,4.3) {$C_{t-1}$};
\node[anchor=south west] at (3,4.3) {$C_t$};

\node[anchor=east] at (-0.2,0.5) {$R_{i_0}$};
\node[anchor=east] at (-0.2,-0.5) {$R_{i_0-1}$};
\node[anchor=east] at (-0.2,-2.5) {$R_1$};

\node at (3,-3.6) {$(T,\rho')$};

\end{tikzpicture}
\caption{Schematic view of $(T,\rho)$ and $(T,\rho')$.}
\label{fig:covering-step}
\end{figure}

\begin{proof}
By Lemma~\ref{lem:covering-step-column}(1), $a_1,\ldots,a_{i_0-1}$ and
$b_1,\ldots,b_{i_0-1}$ are exactly the entries of $C_{t-1}$ and
$C_t$ below row $i_0$ (under either $\rho$ or $\rho'$); they are
strictly increasing by (P1). For each $i$, $a_i=R_{i,t-1}\le R_{i,t}=b_i$ since
$R_i$ is weakly increasing and $t-1<t$. By (P1) applied to
$\rho$ at $(i_0,j_0)$ (where $\rho_{i_0}(j_0)=t$), $c_1=R_{i_0,j_0}>
R_{i_0-1,t}=b_{i_0-1}\ge b_i$ for all $i$; likewise by (P1) for $\rho'$
(where $\rho'_{i_0}(j_0)=t-1$), $c_1>R_{i_0-1,t-1}=a_{i_0-1}\ge a_i$
for all $i$; since $c_s\ge c_1$ for every $s\ge1$, every $c_s$ exceeds
every $a_i,b_i$.

Recall that the column of $(i_0,j_0)$, restricted to rows $\ge i_0$,
is $c_k\cdots c_1$ by definition.

\emph{Under $\rho$.} The position $(i_0,j_0)$ lies in $C_t$
(Lemma~\ref{lem:covering-step-column}(2)), and no other position of
row $i_0$ lies in $C_{t-1}$ (Lemma~\ref{lem:covering-step}(1)). So
$C_{t-1}$ has nothing at row $i_0$ or above, i.e.\
$w(C_{t-1})=a_{i_0-1}\cdots a_1$, and $w(C_t)=c_k\cdots
c_1\,b_{i_0-1}\cdots b_1$.

\emph{Under $\rho'$.} The position $(i_0,j_0)$ lies in $C_{t-1}$
(Lemma~\ref{lem:covering-step-column}(2)), and no other position of
row $i_0$ lies in $C_t$ (Lemma~\ref{lem:covering-step}(1)). So
$w(C_{t-1})=c_k\cdots c_1\,a_{i_0-1}\cdots a_1$ and
$w(C_t)=b_{i_0-1}\cdots b_1$.

Since $\rho,\rho'$ agree everywhere except at $(i_0,j_0)$, every
column other than $C_{t-1},C_t$ is unaffected.
\end{proof}

\begin{theorem}[The plactic class of the column word is independent of positioning]
\label{thm:col-invariant}
Let $T$ be a row array admitting a positioning. For every positioning
$\rho$ on $T$,
\[
\mathrm{col}(T,\rho) \;\equiv\; \mathrm{col}(T,\rho^{\mathrm{tr}}).
\]
\end{theorem}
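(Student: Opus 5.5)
The proof will go by induction on the excess $\mathrm{exc}(\rho)$ (Definition~\ref{def:excess}). If $\mathrm{exc}(\rho)=0$ then $\rho=\rho^{\mathrm{tr}}$ and there is nothing to prove. Otherwise $\rho\ne\rho^{\mathrm{tr}}$, and Lemma~\ref{lem:covering-step} produces a positioning $\rho'$ with $\mathrm{exc}(\rho')=\mathrm{exc}(\rho)-1$. By the induction hypothesis $\mathrm{col}(T,\rho')\equiv\mathrm{col}(T,\rho^{\mathrm{tr}})$. It therefore suffices to show the single-step statement $\mathrm{col}(T,\rho)\equiv\mathrm{col}(T,\rho')$.

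For the single step I will use Lemma~\ref{lem:local-effect}(2). The two column words agree in every column except $C_{t-1}$ and $C_t$, which are adjacent in the concatenation. So we can write
\[
\mathrm{col}(T,\rho)=u\,\bigl(a_{i_0-1}\cdots a_1\bigr)\bigl(c_k\cdots c_1\,b_{i_0-1}\cdots b_1\bigr)\,v,
\qquad
\mathrm{col}(T,\rho')=u\,\bigl(c_k\cdots c_1\,a_{i_0-1}\cdots a_1\bigr)\bigl(b_{i_0-1}\cdots b_1\bigr)\,v,
\]
with the same prefix $u=w(C_1)\cdots w(C_{t-2})$ and suffix $v$. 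Since $\equiv$ is a congruence, it is enough to prove
\[
c_k\cdots c_1\,a_{i_0-1}\cdots a_1\,b_{i_0-1}\cdots b_1\;\equiv\;a_{i_0-1}\cdots a_1\,c_k\cdots c_1\,b_{i_0-1}\cdots b_1 .
\]
This is exactly the Swap Lemma (Lemma~\ref{lem:swap}) with $\ell=i_0-1$. Its hypotheses are supplied by Lemma~\ref{lem:local-effect}(1): the $a$'s and $b$'s are strictly increasing, $a_i\le b_i$, and every $c_s$ exceeds every $a_i,b_i$. The $c$'s are strictly increasing by (P1), and $k\ge1$.

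The one point to check separately is the edge case. Since $i_0\ge2$ we have $\ell=i_0-1\ge1$, so the Swap Lemma applies as stated and no empty-word case arises. Apart from that, the real work has already been done in Lemmas~\ref{lem:swap} and~\ref{lem:local-effect}. The main thing to verify carefully is the bookkeeping that $C_{t-1}$ and $C_t$ are consecutive factors of the column word and that everything else is unchanged, which is what Lemma~\ref{lem:local-effect}(2) provides.
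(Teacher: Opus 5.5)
Your proposal is correct and matches the paper's proof essentially step for step. Both argue by induction on $\mathrm{exc}(\rho)$, take one elementary covering step (Lemma~\ref{lem:covering-step}), use Lemma~\ref{lem:local-effect} to confine the change to the adjacent columns $C_{t-1},C_t$, apply the Swap Lemma (Lemma~\ref{lem:swap}) with $\ell=i_0-1$, and extend the result to the whole column word because $\equiv$ is a congruence.
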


\begin{proof}
Induction on $\mathrm{exc}(\rho)$. If $\mathrm{exc}(\rho)=0$
then $\rho=\rho^{\mathrm{tr}}$ and there is nothing to prove.
Otherwise let $\rho'$ be obtained from $\rho$ by the elementary
covering step of Lemma~\ref{lem:covering-step}, so
$\mathrm{exc}(\rho')=\mathrm{exc}(\rho)-1$. By
Lemma~\ref{lem:local-effect}, $\mathrm{col}(T,\rho)$ and
$\mathrm{col}(T,\rho')$ agree on every column except $C_{t-1},C_t$, where
they read, respectively,
\[
\mathrm{col}(T,\rho):\quad \cdots\ a_{i_0-1}\cdots a_1\ \ c_k\cdots c_1\,b_{i_0-1}\cdots b_1\ \cdots
\]
\[
\mathrm{col}(T,\rho'):\quad \cdots\ c_k\cdots c_1\,a_{i_0-1}\cdots a_1\ \ b_{i_0-1}\cdots b_1\ \cdots,
\]
with $a_1<\cdots<a_{i_0-1}$, $b_1<\cdots<b_{i_0-1}$, $a_i\le b_i$, and
every $c_s$ exceeding every $a_i,b_i$. By the Swap Lemma
(Lemma~\ref{lem:swap}), with $\ell:=i_0-1$ and the $a,b,c$ just
introduced,
\[
c_k\cdots c_1\,a_{i_0-1}\cdots a_1\,b_{i_0-1}\cdots b_1 \;\equiv\;
a_{i_0-1}\cdots a_1\,c_k\cdots c_1\,b_{i_0-1}\cdots b_1.
\]
As Knuth equivalence is a congruence, prepending and appending the
(identical) remaining columns gives $\mathrm{col}(T,\rho')\equiv
\mathrm{col}(T,\rho)$. By the induction hypothesis,
$\mathrm{col}(T,\rho')\equiv\mathrm{col}(T,\rho^{\mathrm{tr}})$, so
$\mathrm{col}(T,\rho)\equiv\mathrm{col}(T,\rho^{\mathrm{tr}})$ by
transitivity.
\end{proof}

\begin{corollary}
\label{cor:col-well-defined}
For any two positionings $\rho,\rho'$ on a row array $T$,
$\mathrm{col}(T,\rho)\equiv\mathrm{col}(T,\rho')$.
\end{corollary}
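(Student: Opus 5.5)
This follows directly from Theorem~\ref{thm:col-invariant}, so the plan is to reduce both positionings to the trivial one and compare there. Since $T$ admits a positioning, Lemma~\ref{lem:positioning-implies-trivial} shows that the trivial positioning $\rho^{\mathrm{tr}}$ exists on $T$. This makes $\mathrm{col}(T,\rho^{\mathrm{tr}})$ a well-defined reference word.

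We then apply Theorem~\ref{thm:col-invariant} twice, once to $\rho$ and once to $\rho'$. This gives
\[
\mathrm{col}(T,\rho)\equiv\mathrm{col}(T,\rho^{\mathrm{tr}})
\qquad\text{and}\qquad
\mathrm{col}(T,\rho')\equiv\mathrm{col}(T,\rho^{\mathrm{tr}}).
\]
Since $\equiv$ is an equivalence relation (it is the congruence generated by the Knuth relations), symmetry and transitivity combine these into $\mathrm{col}(T,\rho)\equiv\mathrm{col}(T,\rho')$.

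There is no real obstacle here. The only point to check is that the hypothesis of Theorem~\ref{thm:col-invariant}, that $T$ admits a positioning, is satisfied. It is, because $\rho$ itself is a positioning on $T$.
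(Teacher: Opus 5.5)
Your proposal is correct and follows the paper's own proof: you use Lemma~\ref{lem:positioning-implies-trivial} to get the trivial positioning, apply Theorem~\ref{thm:col-invariant} to both $\rho$ and $\rho'$, and conclude by symmetry and transitivity of $\equiv$.
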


\begin{proof}
By Lemma~\ref{lem:positioning-implies-trivial}, $\rho^{\mathrm{tr}}$
exists, so Theorem~\ref{thm:col-invariant} applies to both $\rho$ and
$\rho'$: $\mathrm{col}(T,\rho)\equiv\mathrm{col}(T,\rho^{\mathrm{tr}})
\equiv\mathrm{col}(T,\rho')$.
\end{proof}

\section{Tight positioning}
\label{sec:tight}

\subsection{Definition of tight positioning}

\begin{definition}[Tight positioning]
\label{def:tight-positioning}
Let $R_{i-1}$ and $R_i$ be two rows. Define, whenever the set on the
right is nonempty,
\[
\rho^{\mathrm{tight}}_i(j) \;:=\; \max\bigl\{\, d : R_{i-1,d} < R_{i,j} \,\bigr\};
\]
this need not be defined for every $j$. Whenever it is defined, we
have $R_{i-1,\,\rho^{\mathrm{tight}}_i(j)}<R_{i,j}$, so (P1) holds
immediately.

Let $T=(R_1,\ldots,R_m)$ be a row array. If, for every $i=2,\ldots,m$
and $j=1,\ldots,|R_i|$, $\rho^{\mathrm{tight}}_i(j)$ is defined and
$\rho^{\mathrm{tight}}_i$ is, in addition, strictly increasing, i.e.\
satisfies (P2), then the resulting positioning $\rho^{\mathrm{tight}}$
on $T$ (Definition~\ref{def:positioning}) is called \emph{the tight
positioning} on $T$, and the aligned row array $(T,\rho^{\mathrm{tight}})$
is called \emph{the tight alignment} of $T$.
\end{definition}

\begin{example}[Tight positioning, pictured]
\label{ex:tight-picture}
Let $R_1=(1,2,4,6,8)$, $R_2=(2,5,7)$, and $R_3=(6)$. Then
\[
\rho^{\mathrm{tight}}_2(1) = \max\{d:R_{1,d}<2\} = 1, \qquad
\rho^{\mathrm{tight}}_2(2) = \max\{d:R_{1,d}<5\} = 3,
\]
\[
\rho^{\mathrm{tight}}_2(3) = \max\{d:R_{1,d}<7\} = 4, \qquad
\rho^{\mathrm{tight}}_3(1) = \max\{d:R_{2,d}<6\} = 2,
\]
each strictly increasing, so the tight positioning exists on
$(R_1,R_2,R_3)$: each entry is pushed as far right as possible while
still sitting above a strictly smaller entry of the row below.
Figure~\ref{fig:tight-picture} draws the resulting tight alignment.
\begin{figure}[H]
\centering
\begin{tikzpicture}[scale=0.9, every node/.style={font=\small}]
\foreach \i/\v in {1/1,2/2,3/4,4/6,5/8} {
  \draw (\i-1,0) rectangle (\i,1);
  \node at (\i-0.5,0.5) {$\v$};
}
\foreach \i/\v in {1/2,3/5,4/7} {
  \draw (\i-1,1) rectangle (\i,2);
  \node at (\i-0.5,1.5) {$\v$};
}
\foreach \i/\v in {3/6} {
  \draw (\i-1,2) rectangle (\i,3);
  \node at (\i-0.5,2.5) {$\v$};
}
\node[anchor=east] at (-0.3,0.5) {$R_1$};
\node[anchor=east] at (-0.3,1.5) {$R_2$};
\node[anchor=east] at (-0.3,2.5) {$R_3$};
\end{tikzpicture}
\caption{The tight alignment of Example~\ref{ex:tight-picture}.}
\label{fig:tight-picture}
\end{figure}
\end{example}

\begin{example}[The tight positioning need not exist]
\label{ex:tight-fails}
Let $R_1 = (1,2,3,4)$ and $R_2 = (5,6)$. Then
\[
\rho^{\mathrm{tight}}_2(1) = \max\{d : R_{1,d} < 5\} = 4, \qquad
\rho^{\mathrm{tight}}_2(2) = \max\{d : R_{1,d} < 6\} = 4,
\]
so $\rho^{\mathrm{tight}}_2(1) = \rho^{\mathrm{tight}}_2(2)$: condition
(P2) fails, so the tight positioning does not exist on
$(R_1,R_2)$. A positioning nonetheless exists on it, e.g.\
$\rho_2(1)=3,\ \rho_2(2)=4$, pictured in
Figure~\ref{fig:tight-fails-picture}.
\begin{figure}[H]
\centering
\begin{tikzpicture}[scale=0.9, every node/.style={font=\small}]
\foreach \i/\v in {1/1,2/2,3/3,4/4} {
  \draw (\i-1,0) rectangle (\i,1);
  \node at (\i-0.5,0.5) {$\v$};
}
\foreach \i/\v in {3/5,4/6} {
  \draw (\i-1,1) rectangle (\i,2);
  \node at (\i-0.5,1.5) {$\v$};
}
\node[anchor=east] at (-0.3,0.5) {$R_1$};
\node[anchor=east] at (-0.3,1.5) {$R_2$};
\end{tikzpicture}
\caption{A (non-tight) alignment of $(R_1,R_2)$, from
Example~\ref{ex:tight-fails}.}
\label{fig:tight-fails-picture}
\end{figure}
\end{example}

\begin{corollary}[Tight positioning is maximal]
Whenever the tight positioning exists on $(R_{i-1},R_i)$, it is the
maximum, among all positionings on it, in the order of
Definition~\ref{def:positioning-order}: for every positioning $\rho$
on $(R_{i-1},R_i)$ and every $j$,
\[
\rho_i(j) \;\le\; \rho^{\mathrm{tight}}_i(j).
\]
\end{corollary}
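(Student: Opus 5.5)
The claim follows directly from (P1) and the definition of $\rho^{\mathrm{tight}}_i$ as a maximum; (P2) is not needed at all. Fix a positioning $\rho$ on $(R_{i-1},R_i)$ and an index $1\le j\le|R_i|$. I will show that $\rho_i(j)$ is a member of the set whose maximum defines $\rho^{\mathrm{tight}}_i(j)$.

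By (P1) for $\rho$ (Definition~\ref{def:positioning}), $R_{i-1,\rho_i(j)}<R_{i,j}$. Also $1\le\rho_i(j)\le|R_{i-1}|$, so $d:=\rho_i(j)$ is a valid index of $R_{i-1}$. Together these give
\[
\rho_i(j)\in\bigl\{\,d : R_{i-1,d}<R_{i,j}\,\bigr\}.
\]
In particular this set is nonempty. That is consistent with the hypothesis that the tight positioning exists, but here the nonemptiness comes straight from $\rho$.

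Since $\rho^{\mathrm{tight}}_i(j)$ is by Definition~\ref{def:tight-positioning} the maximum of this set, we get $\rho_i(j)\le\rho^{\mathrm{tight}}_i(j)$. This holds for every $j$, so $\rho\le\rho^{\mathrm{tight}}$ in the pointwise order of Definition~\ref{def:positioning-order}.

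Finally, $\rho^{\mathrm{tight}}$ is itself a positioning by assumption, so it is the maximum element among positionings, not merely an upper bound. The same argument applied row by row gives the analogous statement for a whole row array $T=(R_1,\ldots,R_m)$ on which the tight positioning exists.
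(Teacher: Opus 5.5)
Your proof is correct and is essentially the paper's own argument: (P1) puts $\rho_i(j)$ in the set $\{d : R_{i-1,d}<R_{i,j}\}$, whose maximum is $\rho^{\mathrm{tight}}_i(j)$ by definition. Your extra remarks, that this set is nonempty because of $\rho$ and that $\rho^{\mathrm{tight}}$ is itself a positioning and hence a genuine maximum, are accurate refinements of the same route.
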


\begin{proof}
Let $\rho$ be any positioning on $(R_{i-1},R_i)$ and fix $j$. By (P1),
$R_{i-1,\,\rho_i(j)} < R_{i,j}$, so $\rho_i(j)$ belongs to the set
$\{d : R_{i-1,d} < R_{i,j}\}$ whose maximum defines
$\rho^{\mathrm{tight}}_i(j)$.
Hence $\rho_i(j)\le\rho^{\mathrm{tight}}_i(j)$.
\end{proof}

\subsection{Tight positioning in an \texorpdfstring{$N$}{N}-tableau}

\begin{proposition}[Tight positioning under row inclusion]
\label{prop:tight-exists-for-Ntableau}
Let $R_{i-1}$ and $R_i$ be rows with strictly increasing entries such
that $R_i \subseteq R_{i-1}$ and $\min R_{i-1} < \min R_i$. For
$j=1,\ldots,|R_i|$, let $\tau_i(j)$ be the unique index with
$R_{i-1,\,\tau_i(j)}=R_{i,j}$. Then $\rho^{\mathrm{tight}}_i(j)$ is
defined for every $j$, and
\[
\rho^{\mathrm{tight}}_i(j) \;=\; \tau_i(j)-1;
\]
in particular, the tight positioning exists on $(R_{i-1},R_i)$.
\end{proposition}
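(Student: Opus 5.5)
The plan is to compute $\rho^{\mathrm{tight}}_i(j)$ directly from the definition, using that $R_{i-1}$ is strictly increasing and contains $R_{i,j}$. First, $\tau_i(j)$ is well defined: since $R_i\subseteq R_{i-1}$, the value $R_{i,j}$ occurs in $R_{i-1}$, and it occurs exactly once because $R_{i-1}$ is strictly increasing. Since $R_{i-1}$ is strictly increasing, the set $\{d: R_{i-1,d}<R_{i,j}\}$ is exactly the initial segment $\{1,\dots,\tau_i(j)-1\}$. Indeed, $d<\tau_i(j)$ gives $R_{i-1,d}<R_{i-1,\tau_i(j)}=R_{i,j}$, and $d\ge\tau_i(j)$ gives $R_{i-1,d}\ge R_{i,j}$. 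So its maximum is $\tau_i(j)-1$, provided the set is nonempty, that is, provided $\tau_i(j)\ge2$.

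The only real point is this nonemptiness, which is where the hypothesis $\min R_{i-1}<\min R_i$ is used. For every $j$ we have $R_{i,j}\ge R_{i,1}=\min R_i>\min R_{i-1}=R_{i-1,1}$. Hence $R_{i,j}\neq R_{i-1,1}$, so $\tau_i(j)\ne1$, so $\tau_i(j)\ge2$. Therefore $\rho^{\mathrm{tight}}_i(j)$ is defined and equals $\tau_i(j)-1$, which lies in $\{1,\dots,|R_{i-1}|\}$ as Definition~\ref{def:positioning} requires.

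It remains to check (P2). Since both rows are strictly increasing, $j<j'$ gives $R_{i-1,\tau_i(j)}=R_{i,j}<R_{i,j'}=R_{i-1,\tau_i(j')}$, and hence $\tau_i(j)<\tau_i(j')$. Thus $\tau_i$, and with it $\tau_i-1$, is strictly increasing. (P1) holds automatically, as noted in Definition~\ref{def:tight-positioning}. So the tight positioning exists on $(R_{i-1},R_i)$.

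I expect no genuine obstacle. The only step needing care is remembering to use $\min R_{i-1}<\min R_i$ to rule out $\tau_i(j)=1$; without it the defining set could be empty.
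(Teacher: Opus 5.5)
Your proposal is correct and follows the paper's proof essentially line by line. Like the paper, you identify $\{d : R_{i-1,d}<R_{i,j}\}$ as the initial segment $\{1,\ldots,\tau_i(j)-1\}$, use $\min R_{i-1}<\min R_i$ to rule out $\tau_i(j)=1$, and obtain (P2) from $\tau_i$ being strictly increasing; your explicit checks that $\tau_i$ is well defined and that $\rho^{\mathrm{tight}}_i(j)$ lands in $\{1,\ldots,|R_{i-1}|\}$ are small points the paper leaves implicit.
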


\begin{proof}
Since $R_{i-1}$ is strictly increasing, $R_{i-1,d}<R_{i-1,\,\tau_i(j)}=
R_{i,j}$ iff $d<\tau_i(j)$. Moreover $\tau_i(j)\ge2$: since
$R_{i-1,\,\tau_i(j)}=R_{i,j}\ge\min R_i>\min R_{i-1}=R_{i-1,1}$, we
have $\tau_i(j)\ne1$. Hence
\[
\rho^{\mathrm{tight}}_i(j) \;=\; \max\{d : R_{i-1,d}<R_{i,j}\} \;=\;
\max\{1,\ldots,\tau_i(j)-1\} \;=\; \tau_i(j)-1.
\]
(P1) always holds for $\rho^{\mathrm{tight}}$
(Definition~\ref{def:tight-positioning}). For (P2), if $j<j'$ then
$R_{i-1,\,\tau_i(j)}=R_{i,j}<R_{i,j'}=R_{i-1,\,\tau_i(j')}$, so
$\tau_i(j)<\tau_i(j')$ as $R_{i-1}$ is increasing; hence
$\rho^{\mathrm{tight}}_i$ is strictly increasing.
\end{proof}

Recall that an $N$-tableau is a finite sequence of nonempty rows
$R_1,\ldots,R_m$ such that (i) each $R_i$ is a finite strictly
increasing sequence of elements of $\mathbf{n}$, (ii) $R_i\subseteq R_{i-1}$ for
every $1<i\le m$, and (iii) $\min R_1<\min R_2<\cdots<\min R_m$. This
is the form that sits directly on our row-array framework.

\begin{corollary}
\label{cor:tight-exists-for-Ntableau}
Every $N$-tableau admits a tight positioning: apply
Proposition~\ref{prop:tight-exists-for-Ntableau} to every adjacent
pair $(R_{i-1},R_i)$, $i=2,\dots,m$, using $R_i \subseteq R_{i-1}$ and
$\min R_{i-1} < \min R_i$, both from the definition of $N$-tableau --
(ii) and (iii) respectively.
\end{corollary}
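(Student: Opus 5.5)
The statement is Corollary~\ref{cor:tight-exists-for-Ntableau}, and the proof is a direct reduction to Proposition~\ref{prop:tight-exists-for-Ntableau}. The tight positioning of Definition~\ref{def:tight-positioning} is defined row by row. The value $\rho^{\mathrm{tight}}_i(j)$ depends only on the pair $(R_{i-1},R_i)$, and conditions (P1) and (P2) of Definition~\ref{def:positioning} are also stated separately for each $i\ge2$. So the existence of the tight positioning on $T=(R_1,\ldots,R_m)$ is exactly the conjunction, over $i=2,\ldots,m$, of its existence on the two-row array $(R_{i-1},R_i)$. I would make this first step explicit, so that the global claim is visibly the same as the collection of local claims.

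Fix $i$ with $2\le i\le m$. From the definition of $N$-tableau recalled just before the corollary, the rows $R_{i-1}$ and $R_i$ are strictly increasing (condition (i)). We also have $R_i\subseteq R_{i-1}$ (condition (ii)) and $\min R_{i-1}<\min R_i$ (condition (iii), one link of the chain of minima). These are precisely the hypotheses of Proposition~\ref{prop:tight-exists-for-Ntableau}. That proposition then gives two facts: $\rho^{\mathrm{tight}}_i(j)=\tau_i(j)-1$ is defined for every $j$, and $\rho^{\mathrm{tight}}_i$ is strictly increasing.

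Combining these over all $i$, every $\rho^{\mathrm{tight}}_i(j)$ is defined and every $\rho^{\mathrm{tight}}_i$ satisfies (P2). By Definition~\ref{def:tight-positioning} (where (P1) holds automatically), this is exactly the statement that the tight positioning on $T$ exists.

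There is no real obstacle here. The only point needing care is the reduction from the whole array to adjacent pairs. The nonemptiness of rows in an $N$-tableau ensures that $\min R_{i-1}$ and $\min R_i$ make sense. Also, the range constraint $1\le\rho_i(j)\le|R_{i-1}|$ holds, because $\tau_i(j)-1$ is an index into $R_{i-1}$ lying between $1$ and $|R_{i-1}|-1$.
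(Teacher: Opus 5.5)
Your proposal is correct and follows exactly the argument the paper builds into the corollary: apply Proposition~\ref{prop:tight-exists-for-Ntableau} to each adjacent pair $(R_{i-1},R_i)$, with strictly increasing rows, containment (ii) and the minima condition (iii) supplying its hypotheses. Your added remarks are accurate. The definition is indeed local to adjacent pairs, and the range condition holds, since $1\le\tau_i(j)-1\le|R_{i-1}|-1$.
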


\begin{remark}
\label{rem:positioning-gives-minima}
Condition (iii) of an $N$-tableau is automatic once some positioning
exists. Indeed, a row array with nonempty rows admitting a positioning
also admits the trivial positioning $\rho^{\mathrm{tr}}$
(Lemma~\ref{lem:positioning-implies-trivial}), i.e.\ it is an SSYT
(Definition~\ref{def:ssyt-as-aligned}), and (P1) for
$\rho^{\mathrm{tr}}$ at $j=1$ gives
$\min R_{i-1}=R_{i-1,1}<R_{i,1}=\min R_i$: the first column of an SSYT
strictly increases.
\end{remark}

\begin{corollary}[Higher means further left]
\label{cor:higher-left}
Let $(T,\rho)$ be an aligned row array with $T$ an $N$-tableau, and
let a letter $a$ occur in row $i$, in column $C_r$, and in row $i'$, in
column $C_{r'}$. Then $r<r'$ if and only if $i'<i$.
\end{corollary}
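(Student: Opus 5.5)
The plan is to reduce to Corollary~\ref{cor:equal-entry-right} by using the row-inclusion structure of an $N$-tableau to fill in the intermediate rows. Since $T$ is an $N$-tableau, $R_i\subseteq R_{i-1}\subseteq\cdots\subseteq R_1$. Each row is strictly increasing, so a letter $a$ occurs at most once in each row. Hence the two occurrences in the statement lie in distinct rows whenever they are distinct positions, and the case $i=i'$ is the same position with $r=r'$. In that case both sides of the equivalence are false, so we may assume $i\ne i'$.

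First suppose $i'<i$. Since $a\in R_i$ and $R_i\subseteq R_k$ for every $k\le i$, the letter $a$ occurs in every row between $i'$ and $i$. Corollary~\ref{cor:equal-entry-right} then applies directly, with the occurrence in row $i$ in column $C_r$ and the occurrence in row $i'<i$ in column $C_{r'}$, and gives $r<r'$. This proves that $i'<i$ implies $r<r'$.

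For the converse, suppose $r<r'$. If $i'>i$, the same argument with the roles exchanged gives $r'<r$, a contradiction. If $i'=i$, the two occurrences coincide because $a$ appears at most once in a row, so $r=r'$, again a contradiction. Hence $i'<i$.

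The only point needing care is that ``$a$ occurs in every row between $i'$ and $i$'' is exactly what the inclusion (ii) supplies, downward from the higher row. The uniqueness of the occurrence within a row, from strict increase (i), handles the degenerate case. No real obstacle is expected, since the argument is a direct application of Corollary~\ref{cor:equal-entry-right}.
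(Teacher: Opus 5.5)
Your proposal is correct and follows the paper's proof essentially verbatim. Containment supplies the occurrences of $a$ in all intermediate rows, and strict increase of rows makes each occurrence unique. Applying Corollary~\ref{cor:equal-entry-right} in the cases $i'<i$ and $i'>i$, with $i'=i$ forcing $r=r'$, then gives the equivalence by trichotomy.
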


\begin{proof}
By containment, $a$ occurs in every row below row $\max(i,i')$, and,
rows being strictly increasing, at most once in each row. So if
$i'<i$, Corollary~\ref{cor:equal-entry-right} gives $r<r'$; if
$i'>i$, it gives $r'<r$; and if $i'=i$, the two occurrences coincide
and $r=r'$.
\end{proof}

We end this subsection with a property of the tight positioning of
an $N$-tableau that will be used repeatedly in
Sections~\ref{sec:reduction} and~\ref{sec:catalan-image}.

\begin{lemma}[Successor sits directly above]
\label{lem:successor-above}
Let $T=(R_1,\ldots,R_m)$ be an $N$-tableau equipped with its tight
positioning $\rho=\rho^{\mathrm{tight}}$
(Corollary~\ref{cor:tight-exists-for-Ntableau}). Suppose $1\le k<m$ and
$R_{k,q_1}+1=R_{k+1,q_2}$. Then
\[
\rho_{k+1}(q_2) = q_1,
\]
i.e.\ $\psi(k+1,q_2)=(k,q_1)$: the entries $R_{k+1,q_2}$ (in row
$k+1$) and $R_{k,q_1}$ (in row $k$) lie in the same column of
$(T,\rho)$, one directly above the other.
\end{lemma}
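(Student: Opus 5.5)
The plan is to use the explicit formula for the tight positioning from Proposition~\ref{prop:tight-exists-for-Ntableau}: $\rho_{k+1}(q_2)=\tau_{k+1}(q_2)-1$, where $\tau_{k+1}(q_2)$ is the unique index with $R_{k,\tau_{k+1}(q_2)}=R_{k+1,q_2}$. Under this formula, the claim $\rho_{k+1}(q_2)=q_1$ is equivalent to $\tau_{k+1}(q_2)=q_1+1$. In words, the claim says that the letter $R_{k+1,q_2}=R_{k,q_1}+1$ occupies position $q_1+1$ of row $k$.

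First, I would locate $x:=R_{k+1,q_2}$ in row $k$. By the containment $R_{k+1}\subseteq R_k$, the letter $x$ occurs in $R_k$, say at index $\tau:=\tau_{k+1}(q_2)$. Because $R_k$ is strictly increasing and $R_{k,q_1}=x-1<x=R_{k,\tau}$, we get $q_1<\tau$.

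Second, I would show that no entry of $R_k$ lies strictly between positions $q_1$ and $\tau$. Any index $d$ with $q_1<d<\tau$ would give $x-1<R_{k,d}<x$ by strict monotonicity. This is impossible for integer entries, so $\tau=q_1+1$.

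Combining the two steps, $\rho_{k+1}(q_2)=\tau-1=q_1$, and hence $\psi(k+1,q_2)=(k,q_1)$ by Definition~\ref{def:column}. Alternatively, I could argue directly from the definition of the tight positioning as $\max\{d:R_{k,d}<x\}$: since $R_k$ is a strictly increasing sequence of integers containing both $x-1$ and $x$, this maximum is exactly the index of $x-1$. I expect no real obstacle here. The only point requiring care is that the argument uses integrality of the alphabet (consecutive values $x-1,x$) together with the containment $R_{k+1}\subseteq R_k$, which guarantees that $x$ itself appears in row $k$. Without the containment, the tight position could land further right, on an entry between $x-1$ and $x$ that cannot exist, or the index of $x-1$ might not be the maximum.
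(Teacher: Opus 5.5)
Your proof is correct, and your ``alternative'' argument is exactly the paper's proof. Every $d$ with $R_{k,d}<R_{k,q_1}+1$ has $R_{k,d}\le R_{k,q_1}$ by integrality, hence $d\le q_1$ because $R_k$ is strictly increasing, and $q_1$ itself lies in the set. Your main route through the formula $\rho^{\mathrm{tight}}_{k+1}(q_2)=\tau_{k+1}(q_2)-1$ of Proposition~\ref{prop:tight-exists-for-Ntableau} is a harmless detour to the same conclusion.

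Your closing remark, however, is mistaken: the containment $R_{k+1}\subseteq R_k$ plays no role in computing this value. In a strictly increasing integer row containing $x-1$, every entry to the right of $x-1$ is already $\ge x$. So the index of $x-1$ is the maximum of $\{d:R_{k,d}<x\}$ whether or not $x$ occurs in $R_k$. For instance, $R_k=(1,3,7)$ and $x=4$ give the maximum $2$, the index of $3$.

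Containment is needed only for two things: to guarantee that the tight positioning exists at all, which is part of the hypothesis, and for your detour through Proposition~\ref{prop:tight-exists-for-Ntableau}. The direct argument is therefore both shorter and more general.
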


\begin{proof}
By definition, $\rho_{k+1}(q_2)=\max\{\,d : R_{k,d}<R_{k+1,q_2}\,\}$.
Every element $R_{k,d}$ satisfying $R_{k,d}<R_{k+1,q_2}=R_{k,q_1}+1$
satisfies $R_{k,d}\le R_{k,q_1}$ (both are integers), and $R_{k,q_1}$
itself is attained, at position $q_1$. Hence $q_1$ is the maximum,
i.e.\ $\rho_{k+1}(q_2)=q_1$. The rest is immediate from the
definition of $\psi$ (Definition~\ref{def:column}).
\end{proof}

\section{Full \texorpdfstring{$N$}{N}-tableau}
\label{sec:reduction}

\subsection{Downward closed restrictions}

We define the restriction of an alignment to a downward closed set of
cells and record a few of its properties, in preparation for the
definition of the full core.

\begin{definition}[Restriction]
\label{def:restriction}
Let $(T,\rho)$ be an aligned row array, $T=(R_1,\dots,R_m)$, and let
$S$ be a set of cells of $T$ that is \emph{downward closed along
columns}: whenever $(i,j)\in S$ and $i>1$, also $(i-1,\rho_i(j))\in S$.
For each $i$, let $S_i:=\{j : (i,j)\in S\}$ be the set of positions
of the cells of row $i$ that lie in $S$, and let
$\sigma_i:S_i\to\{1,\ldots,|S_i|\}$ be the unique order-preserving
bijection. Let $T'=(R'_1,\ldots,R'_m)$ be the row array obtained by
keeping, in each row, exactly the entries whose cell lies in $S$:
\[
R'_{i,j}:=R_{i,\sigma_i^{-1}(j)}, \qquad 1\le j\le|S_i|.
\]
For $i>1$, define
\[
\rho'_i:=\sigma_{i-1}\circ\rho_i\circ\sigma_i^{-1} :
\{1,\ldots,|R'_i|\}\to\{1,\ldots,|R'_{i-1}|\},
\]
which is well defined since $\rho_i(S_i)\subseteq S_{i-1}$, as $S$ is
downward closed. We call $T'$ the \emph{restriction} of $T$ to $S$.
\end{definition}

In words: $T'$ is obtained by erasing the cells outside $S$, and
$\rho'$ lets every surviving entry sit above the same entry as before;
the maps $\sigma_i$ only renumber the surviving positions; see
Figure~\ref{fig:restriction}.

\begin{figure}[H]
\centering
\begin{minipage}{0.45\textwidth}
\centering
\begin{tikzpicture}[scale=0.8, every node/.style={font=\small}]
\foreach \i/\v in {1/1,2/2,3/4,4/6} {
  \draw (\i-1,0) rectangle (\i,1);
  \node at (\i-0.5,0.5) {$\v$};
}
\foreach \i/\v/\c in {2/3/gray!30,3/5/white,4/7/white} {
  \draw[fill=\c] (\i-1,1) rectangle (\i,2);
  \node at (\i-0.5,1.5) {$\v$};
}
\foreach \i/\v/\c in {3/6/white,4/8/gray!30} {
  \draw[fill=\c] (\i-1,2) rectangle (\i,3);
  \node at (\i-0.5,2.5) {$\v$};
}
\node[anchor=east] at (-0.3,0.5) {$R_1$};
\node[anchor=east] at (-0.3,1.5) {$R_2$};
\node[anchor=east] at (-0.3,2.5) {$R_3$};
\end{tikzpicture}

\smallskip
$(T,\rho)$, with the cells outside $S$ shaded.
\end{minipage}
\hfill
\begin{minipage}{0.45\textwidth}
\centering
\begin{tikzpicture}[scale=0.8, every node/.style={font=\small}]
\foreach \i/\v in {1/1,2/2,3/4,4/6} {
  \draw (\i-1,0) rectangle (\i,1);
  \node at (\i-0.5,0.5) {$\v$};
}
\foreach \i/\v in {3/5,4/7} {
  \draw (\i-1,1) rectangle (\i,2);
  \node at (\i-0.5,1.5) {$\v$};
}
\foreach \i/\v in {3/6} {
  \draw (\i-1,2) rectangle (\i,3);
  \node at (\i-0.5,2.5) {$\v$};
}
\node[anchor=east] at (-0.3,0.5) {$R'_1$};
\node[anchor=east] at (-0.3,1.5) {$R'_2$};
\node[anchor=east] at (-0.3,2.5) {$R'_3$};
\end{tikzpicture}

\smallskip
$(T',\rho')$.
\end{minipage}
\caption{The row array $R_1=(1,2,4,6)$, $R_2=(3,5,7)$, $R_3=(6,8)$
with a positioning $\rho$, and its restriction to the set $S$ of
unshaded cells. The entry $5$ moves from position $2$ to position $1$
of its row ($\sigma_2(2)=1$), but still sits above $4$. $T'$ is drawn
with $\rho'$, which is a positioning by
Lemma~\ref{lem:positioning-restricts}.}
\label{fig:restriction}
\end{figure}

\begin{lemma}[Restricting a positioning]
\label{lem:positioning-restricts}
In the notation of Definition~\ref{def:restriction}, $\rho'$ is a
positioning on $T'$. We call the aligned row array $(T',\rho')$ the
\emph{restriction} of $(T,\rho)$ to $S$.
\end{lemma}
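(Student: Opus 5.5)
We must verify that $\rho'$ satisfies the two conditions of Definition~\ref{def:positioning}, together with its range requirement. Throughout we use that $\sigma_i$ and $\sigma_{i-1}$ are order-preserving bijections and that $\rho_i(S_i)\subseteq S_{i-1}$, which holds because $S$ is downward closed.

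First, the range. For $1\le j\le|R'_i|=|S_i|$, the index $\sigma_i^{-1}(j)$ lies in $S_i$. Hence $\rho_i(\sigma_i^{-1}(j))\in S_{i-1}$, and applying $\sigma_{i-1}$ gives a value in $\{1,\ldots,|S_{i-1}|\}=\{1,\ldots,|R'_{i-1}|\}$. So $\rho'_i$ is a well-defined map with the required codomain. This step also quietly handles empty rows: if $S_i=\varnothing$ there is nothing to check. If $S_i\neq\varnothing$, then $S_{i-1}\neq\varnothing$, so $\rho'_i$ indeed lands in a nonempty range.

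Next, the value condition (P1). Fix $j$ and write $d:=\sigma_i^{-1}(j)$. By definition $R'_{i,j}=R_{i,d}$. Likewise
\[
R'_{i-1,\rho'_i(j)}=R_{i-1,\sigma_{i-1}^{-1}(\sigma_{i-1}(\rho_i(d)))}=R_{i-1,\rho_i(d)}.
\]
By (P1) for $\rho$ we have $R_{i-1,\rho_i(d)}<R_{i,d}$, which gives $R'_{i-1,\rho'_i(j)}<R'_{i,j}$.

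Finally, strict monotonicity (P2). The map $\rho'_i$ is a composition of three strictly increasing maps: $\sigma_i^{-1}$, then $\rho_i$ restricted to $S_i$, then $\sigma_{i-1}$ restricted to $S_{i-1}$. A composition of strictly increasing maps is strictly increasing. The rows of $T'$ are subsequences of weakly increasing rows, so they are themselves weakly increasing, and $T'$ is a genuine row array in the sense of Definition~\ref{def:row-array}.

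None of these steps is a real obstacle. The only point needing care is the bookkeeping that $\sigma_{i-1}^{-1}\circ\sigma_{i-1}$ is the identity on $S_{i-1}$. That identity is exactly why downward closedness is required.
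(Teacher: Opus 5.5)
Your proof is correct and matches the paper's argument. You get (P1) by unwinding $R'_{i-1,\rho'_i(j)}=R_{i-1,\rho_i(\sigma_i^{-1}(j))}$ and applying (P1) for $\rho$, and (P2) because $\rho'_i$ is a composition of strictly increasing maps; the extra range check is a harmless addition. One small correction of emphasis: downward closedness is what guarantees $\rho_i(\sigma_i^{-1}(j))\in S_{i-1}$, the domain of $\sigma_{i-1}$, whereas the identity $\sigma_{i-1}^{-1}\circ\sigma_{i-1}=\mathrm{id}_{S_{i-1}}$ holds automatically.
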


\begin{proof}
Fix $i>1$ and $1\le j\le|R'_i|$.

(P1): by (P1) for $\rho$,
\[
R'_{i-1,\,\rho'_i(j)}
=R'_{i-1,\,\sigma_{i-1}(\rho_i(\sigma_i^{-1}(j)))}
=R_{i-1,\,\rho_i(\sigma_i^{-1}(j))}<R_{i,\,\sigma_i^{-1}(j)}=R'_{i,j}.
\]

(P2): $\rho'_i$ is a composition of the strictly increasing maps
$\sigma_i^{-1}$, $\rho_i$ (by (P2) for $\rho$) and $\sigma_{i-1}$,
hence strictly increasing.
\end{proof}

\begin{lemma}[Restricting a tight positioning stays tight]
\label{lem:tight-restricts}
In the notation of Definition~\ref{def:restriction}, suppose that
$\rho=\rho^{\mathrm{tight}}$ is the tight positioning on $T$. Then
$\rho'$ is the tight positioning on $T'$.
\end{lemma}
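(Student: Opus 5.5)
The plan is to verify the definition of the tight positioning (Definition~\ref{def:tight-positioning}) directly for $\rho'$ on $T'$. That definition asks three things. First, for every $i>1$ and $1\le j\le|R'_i|$, the set $\{d : R'_{i-1,d}<R'_{i,j}\}$ must be nonempty. Second, its maximum must equal $\rho'_i(j)$. Third, the resulting maps must be strictly increasing. By Lemma~\ref{lem:positioning-restricts}, $\rho'$ is already a positioning on $T'$. Its (P1) says exactly that $\rho'_i(j)$ lies in the set above, which gives nonemptiness, and its (P2) gives strict monotonicity. So everything reduces to one claim: $\rho'_i(j)$ is the \emph{largest} element of $\{d : R'_{i-1,d}<R'_{i,j}\}$.

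For maximality I argue by contradiction. Fix $i>1$ and $j$, and set $j^\ast:=\sigma_i^{-1}(j)$, so that $R'_{i,j}=R_{i,j^\ast}$ and $\rho'_i(j)=\sigma_{i-1}(\rho_i(j^\ast))$. Suppose some $d>\rho'_i(j)$ satisfies $R'_{i-1,d}<R'_{i,j}$. Write $e:=\sigma_{i-1}^{-1}(d)\in S_{i-1}$, so that $R_{i-1,e}=R'_{i-1,d}$.

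Two facts then follow. Since $\sigma_{i-1}$ is an order-preserving bijection $S_{i-1}\to\{1,\ldots,|S_{i-1}|\}$, and both $e$ and $\rho_i(j^\ast)$ lie in $S_{i-1}$ (the latter by downward closedness of $S$), the inequality $d>\sigma_{i-1}(\rho_i(j^\ast))$ forces $e>\rho_i(j^\ast)$. We also have $R_{i-1,e}=R'_{i-1,d}<R'_{i,j}=R_{i,j^\ast}$.

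Hence $e$ belongs to $\{d' : R_{i-1,d'}<R_{i,j^\ast}\}$ and exceeds $\rho_i(j^\ast)$. This contradicts $\rho_i(j^\ast)=\rho^{\mathrm{tight}}_i(j^\ast)$ being the maximum of that set. Therefore $\rho'_i(j)$ is the maximum, i.e.\ $\rho'_i=\rho^{\mathrm{tight}}_i$ on $T'$ for every $i>1$, which is the claim.

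There is no real obstacle here. The only point needing care is the bookkeeping between the renumbered positions in $T'$ and the original positions in $T$. Specifically, I must check that order comparisons transfer through $\sigma_{i-1}$, which uses that both indices being compared lie in $S_{i-1}$. This is where downward closedness of $S$ enters, in addition to its role in making $\rho'$ well defined.
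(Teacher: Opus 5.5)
Your proof is correct and is essentially the paper's argument: the paper notes that the cell below survives by downward closedness, and that the entries to its right in $R'_{i-1}$ are among those to its right in $R_{i-1}$, hence all $\ge$ the entry above. That is exactly your contradiction step, stated directly rather than through $\sigma_{i-1}^{-1}$, and your explicit appeal to Lemma~\ref{lem:positioning-restricts} for nonemptiness and (P2) spells out a point the paper leaves implicit.
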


\begin{proof}
Fix $i>1$ and a cell $(i,j)\in S$, with entry $x:=R_{i,j}$. In
$(T,\rho^{\mathrm{tight}})$, $x$ sits above $y:=R_{i-1,q}$, where
$q=\rho^{\mathrm{tight}}_i(j)$; so $y$ is the rightmost entry of
$R_{i-1}$ smaller than $x$, and every entry to its right in $R_{i-1}$
is $\ge x$. As $S$ is downward closed, $y$ survives in $R'_{i-1}$, and
by definition of $\rho'$, $x$ still sits above $y$ in $(T',\rho')$.
The entries to the right of $y$ in $R'_{i-1}$ are among those to its
right in $R_{i-1}$, hence $\ge x$. So $y$ is still the rightmost entry
of $R'_{i-1}$ smaller than $x$, i.e.\ $\rho'_i(\sigma_i(j))$ is the
tight value computed on $T'$.
\end{proof}

\begin{lemma}[Keeping row $1$ keeps every column]
\label{lem:restriction-keeps-columns}
In the notation of Definition~\ref{def:restriction}, write $C$
and $C'$ for the column maps of $(T,\rho)$ and $(T',\rho')$
(Definition~\ref{def:column} and
Lemma~\ref{lem:positioning-restricts}). If $S$ contains every cell of row $1$,
then $R'_1=R_1$, and
\[
C'(i,\sigma_i(j))=C(i,j) \qquad \text{for every }(i,j)\in S:
\]
every surviving entry lies in the same column of $(T',\rho')$ as it
did in $(T,\rho)$.
\end{lemma}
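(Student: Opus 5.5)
The plan is to first dispose of row $1$ and then induct on the row index $i$, following the orbit of a cell under $\psi$ and $\psi'$ simultaneously.

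\emph{Row $1$.} Since $S$ contains every cell of row $1$, we have $S_1=\{1,\ldots,|R_1|\}$. The unique order-preserving bijection of this set onto itself is the identity, so $\sigma_1=\mathrm{id}$. It follows that $R'_{1,j}=R_{1,j}$, i.e.\ $R'_1=R_1$. It also gives the base case $i=1$ of the column identity: $C'(1,\sigma_1(j))=C'(1,j)=j=C(1,j)$.

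\emph{Inductive step.} Let $i>1$ and $(i,j)\in S$, and assume the identity holds for all cells of $S$ in row $i-1$. By Definition~\ref{def:column} applied to $(T',\rho')$, and then the definition of $\rho'$ in Definition~\ref{def:restriction},
\[
C'(i,\sigma_i(j))=C'\bigl(i-1,\rho'_i(\sigma_i(j))\bigr)=C'\bigl(i-1,\sigma_{i-1}(\rho_i(j))\bigr).
\]
Downward closedness gives $(i-1,\rho_i(j))\in S$, so the induction hypothesis applies to this cell. Hence
\[
C'\bigl(i-1,\sigma_{i-1}(\rho_i(j))\bigr)=C(i-1,\rho_i(j))=C(i,j),
\]
where the last equality is again Definition~\ref{def:column}, now for $(T,\rho)$.

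Equivalently, $\psi'\circ\bar\sigma=\bar\sigma\circ\psi$ on $S$, where $\bar\sigma(i,j):=(i,\sigma_i(j))$; iterating this $i-1$ times lands in row $1$, where $\bar\sigma$ is the identity.

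There is no real obstacle here. The only points needing care are that $\sigma_1$ is the identity, which is exactly where the hypothesis on row $1$ is used, and that every cell visited along the orbit stays in $S$, which is what downward closedness guarantees.
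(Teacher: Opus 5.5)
Your proposal is correct and follows essentially the same route as the paper: you show $\sigma_1$ is the identity (giving $R'_1=R_1$ and the base case), then induct on $i$ through the same chain $C'(i,\sigma_i(j))=C'(i-1,\sigma_{i-1}(\rho_i(j)))=C(i-1,\rho_i(j))=C(i,j)$. As in the paper, downward closedness is what justifies applying the induction hypothesis to $(i-1,\rho_i(j))$.
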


\begin{proof}
Since $S_1=\{1,\ldots,|R_1|\}$, the map $\sigma_1$ is an
order-preserving bijection of $\{1,\ldots,|R_1|\}$ onto itself, hence
the identity; so $R'_1=R_1$.

We prove $C'(i,\sigma_i(j))=C(i,j)$ by induction on $i$. For $i=1$,
$C'(1,\sigma_1(j))=\sigma_1(j)=j=C(1,j)$. For $i>1$,
\begin{align*}
C'(i,\sigma_i(j)) &= C'(i-1,\rho'_i(\sigma_i(j)))
  && \text{(Definition~\ref{def:column}, in $(T',\rho')$)}\\
&= C'(i-1,\sigma_{i-1}(\rho_i(j)))
  && \text{(definition of $\rho'$)}\\
&= C(i-1,\rho_i(j))
  && \text{(induction, as $(i-1,\rho_i(j))\in S$)}\\
&= C(i,j)
  && \text{(Definition~\ref{def:column}, in $(T,\rho)$)}. \qedhere
\end{align*}
\end{proof}

\subsection{Full columns and the full core}

\begin{definition}[Full column]
\label{def:full-column}
A column $C_r$ (Definition~\ref{def:column}), reaching rows
$1,\ldots,|C_r|$ (Definition~\ref{def:column-value}), is \emph{full}
if
\[
C_r(i) = C_r(1)+(i-1) \qquad \text{for every } i=1,\ldots,|C_r|
\]
(equivalently: its word has no gaps).
\end{definition}

\begin{definition}[Full aligned row array]
An aligned row array $(T,\rho)$ (Definition~\ref{def:aligned-row-array})
is \emph{full} if every column of $(T,\rho)$ (Definition~\ref{def:column})
is full (see Figure~\ref{fig:full-vs-nonfull}).
\end{definition}

\begin{remark}
The definition makes sense for any aligned row array, but we use this
notion only for an $N$-tableau $T$ equipped with its tight positioning $\rho=\rho^{\mathrm{tight}}$; if $(T,\rho)$ is full,
we call $T$ a \emph{full $N$-tableau}.
\end{remark}

\begin{figure}[H]
\centering
\begin{minipage}{0.45\textwidth}
\centering
\begin{tikzpicture}[scale=0.8, every node/.style={font=\small}]
\foreach \i/\v in {1/1,2/2,3/3,4/4} {
  \draw (\i-1,0) rectangle (\i,1);
  \node at (\i-0.5,0.5) {$\v$};
}
\foreach \i/\v in {1/2,2/3,3/4} {
  \draw (\i-1,1) rectangle (\i,2);
  \node at (\i-0.5,1.5) {$\v$};
}
\foreach \i/\v in {2/4} {
  \draw (\i-1,2) rectangle (\i,3);
  \node at (\i-0.5,2.5) {$\v$};
}
\node[anchor=east] at (-0.3,0.5) {$R_1$};
\node[anchor=east] at (-0.3,1.5) {$R_2$};
\node[anchor=east] at (-0.3,2.5) {$R_3$};
\end{tikzpicture}

\smallskip
Full: every column consecutive.
\end{minipage}
\hfill
\begin{minipage}{0.45\textwidth}
\centering
\begin{tikzpicture}[scale=0.8, every node/.style={font=\small}]
\foreach \i/\v in {1/1,2/2,3/4,4/5} {
  \draw (\i-1,0) rectangle (\i,1);
  \node at (\i-0.5,0.5) {$\v$};
}
\foreach \i/\v in {1/2,3/5} {
  \draw (\i-1,1) rectangle (\i,2);
  \node at (\i-0.5,1.5) {$\v$};
}
\foreach \i/\v in {1/5} {
  \draw (\i-1,2) rectangle (\i,3);
  \node at (\i-0.5,2.5) {$\v$};
}
\node[anchor=east] at (-0.3,0.5) {$R_1$};
\node[anchor=east] at (-0.3,1.5) {$R_2$};
\node[anchor=east] at (-0.3,2.5) {$R_3$};
\end{tikzpicture}

\smallskip
Not full: $C_1$ jumps from $2$ to $5$.
\end{minipage}
\caption{Two tight positioned $N$-tableaux. The left is full; the
right is not, since $C_1$ is not a full column.}
\label{fig:full-vs-nonfull}
\end{figure}

\begin{definition}[Gap in a column]
\label{def:column-gap}
A column $C_r$, reaching rows $1,\ldots,|C_r|$
(Definition~\ref{def:column-value}), has a \emph{gap} at row $i$
(for $1<i\le|C_r|$) if $C_r(i)\ne C_r(i-1)+1$. By
Definition~\ref{def:full-column}, $C_r$ is full iff it has no gap.

Write $g(C_r)$ for the row just below the first gap of $C_r$ when
climbing from row $1$ upward: the largest $i\in\{1,\ldots,|C_r|\}$
such that $C_r$ has no gap at any row $\le i$ (so $g(C_r)=|C_r|$
exactly when $C_r$ is full); we write $g_r:=g(C_r)$ as a shorthand.
\end{definition}

\begin{definition}[Full core of $T$]
\label{def:full-core}
Let $(T,\rho^{\mathrm{tight}})$ be a tight positioned $N$-tableau
(Corollary~\ref{cor:tight-exists-for-Ntableau})
with columns $C_1,\ldots,C_k$, $C_r$ reaching rows
$1,\ldots,|C_r|$, and let $g_r$ be as in
Definition~\ref{def:column-gap}. Let
\[
S^f:=\bigl\{(i,j) : i\le g_{C(i,j)}\bigr\}
\]
be the set of cells lying, in their column $C_r$, no higher than row
$g_r$. It is downward closed along columns: if $(i,j)\in S^f$ and
$i>1$, then $(i-1,\rho_i(j))$ lies in the same column
(Definition~\ref{def:column}) and in a lower row. Let
$T^f=(R_1^f,\ldots,R_m^f)$, the \emph{full core of $T$}, be the restriction of $T$ to $S^f$, in the sense of
Definition~\ref{def:restriction}: so $R_i^f$ keeps exactly the
entries of $R_i$ lying in columns $C_r$ with $i\le g_r$. Since $g_r\ge1$ for
every $r$, $S^f$ contains every cell of row $1$; so, by
Lemma~\ref{lem:restriction-keeps-columns}, $R_1^f=R_1$, and every
entry of $T^f$ lies in the same column as it did in $T$. Since
$g_r=|C_r|$ exactly when $C_r$ is full, $(T,\rho^{\mathrm{tight}})$ is
full if and only if $T^f=T$.
\end{definition}

Since the tight positioning of an $N$-tableau $T$ is uniquely
determined by $T$, the full core $T^f$ also depends only on $T$. We
never consider the analogous construction for other positionings.

\begin{remark}
\label{rem:full-core-tight}
Let $(T,\rho)$ be a tight positioned $N$-tableau and
$T^f$ its full core. By Lemma~\ref{lem:tight-restricts},
$\rho=\rho^{\mathrm{tight}}$ induces the tight positioning $\rho^f$ on
$T^f$ -- we denote it also by $\rho$ for simplicity.
\end{remark}

\begin{example}[A full core]
\label{ex:full-core}
Let $R_1=(1,2,4,5,7)$, $R_2=(2,4,5,7)$, $R_3=(5,7)$, $R_4=(7)$. This
is an $N$-tableau $T$; let $\rho$ be its tight positioning.
Figure~\ref{fig:full-core} shows $(T,\rho)$ and $(T^f,\rho)$. Here $C_2$ has two gaps, at rows $2$ and
$4$, $C_3$ has a gap at row $3$, and $C_4$ has a gap at row $2$, so
$g_2=g_4=1$ and $g_3=2$. This leaves $R_1^f=(1,2,4,5,7)$ and
$R_2^f=(2,5)$, with all columns full.
\end{example}

\begin{figure}[H]
\centering
\begin{minipage}{0.45\textwidth}
\centering
\begin{tikzpicture}[scale=0.8, every node/.style={font=\small}]
\foreach \i/\v in {1/1,2/2,3/4,4/5,5/7} {
  \draw (\i-1,0) rectangle (\i,1);
  \node at (\i-0.5,0.5) {$\v$};
}
\foreach \i/\v/\c in {1/2/white,2/4/gray!30,3/5/white,4/7/gray!30} {
  \draw[fill=\c] (\i-1,1) rectangle (\i,2);
  \node at (\i-0.5,1.5) {$\v$};
}
\foreach \i/\v in {2/5,3/7} {
  \draw[fill=gray!30] (\i-1,2) rectangle (\i,3);
  \node at (\i-0.5,2.5) {$\v$};
}
\foreach \i/\v in {2/7} {
  \draw[fill=gray!30] (\i-1,3) rectangle (\i,4);
  \node at (\i-0.5,3.5) {$\v$};
}
\node[anchor=east] at (-0.3,0.5) {$R_1$};
\node[anchor=east] at (-0.3,1.5) {$R_2$};
\node[anchor=east] at (-0.3,2.5) {$R_3$};
\node[anchor=east] at (-0.3,3.5) {$R_4$};
\end{tikzpicture}

\smallskip
$(T,\rho)$, with the deleted cells shaded.
\end{minipage}
\hfill
\begin{minipage}{0.45\textwidth}
\centering
\begin{tikzpicture}[scale=0.8, every node/.style={font=\small}]
\foreach \i/\v in {1/1,2/2,3/4,4/5,5/7} {
  \draw (\i-1,0) rectangle (\i,1);
  \node at (\i-0.5,0.5) {$\v$};
}
\foreach \i/\v in {1/2,3/5} {
  \draw (\i-1,1) rectangle (\i,2);
  \node at (\i-0.5,1.5) {$\v$};
}
\node[anchor=east] at (-0.3,0.5) {$R_1^f$};
\node[anchor=east] at (-0.3,1.5) {$R_2^f$};
\end{tikzpicture}

\smallskip
$(T^f,\rho)$.
\end{minipage}
\caption{An $N$-tableau with its tight positioning, and its full core
(Example~\ref{ex:full-core}).}
\label{fig:full-core}
\end{figure}

As illustrated in Example~\ref{ex:full-core}, some rows of $T^f$ may
turn out to be empty; the next remark shows why these may simply be
dropped.

\begin{remark}[Empty rows of $T^f$ form a top suffix]
\label{rem:full-core-empty-rows}
By Remark~\ref{rem:full-core-tight}, $\rho$ is a positioning on
$T^f$, so by Lemma~\ref{lem:sizes-decrease} its row sizes are
non-increasing: if $R_i^f=\varnothing$ for some $i$, then
$R_j^f=\varnothing$ for every $j>i$ too. So the empty rows of $T^f$,
if any, form a suffix at the top, and we may simply drop them: from
now on, $T^f$ denotes the row array obtained after deleting this
(possibly empty) top suffix, and $\rho$ the positioning it inherits.
\end{remark}

\subsection{The full core is an \texorpdfstring{$N$}{N}-tableau}

By construction $(T^f,\rho)$ is a full row array. In this subsection
we show that it is also an $N$-tableau. The row-minima condition
follows from Remark~\ref{rem:positioning-gives-minima}, so the main
point is the containment condition.

\begin{proposition}[The full core preserves containment]
\label{prop:full-core-preserves-containment}
Let $(T,\rho^{\mathrm{tight}})$ be a tight positioned $N$-tableau and
$T^f$ its full core (Definition~\ref{def:full-core}). Then
$R_i^f\subseteq R_{i-1}^f$ for every $i>1$.
\end{proposition}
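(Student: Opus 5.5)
We argue entry by entry: every entry surviving in $R_i^f$ also survives in $R_{i-1}^f$. Fix $i>1$ and an entry $x\in R_i^f$. By Definition~\ref{def:full-core}, $x=R_{i,j}$ for a cell $(i,j)$ lying in a column $C_r$ of $(T,\rho^{\mathrm{tight}})$ with $i\le g_r$. Hence $C_r$ has no gap at rows $2,\dots,i$, so
\[
C_r(k)=x-(i-k)\qquad (1\le k\le i).
\]
In particular, $x-s\in R_{i-s}$ for $0\le s\le i-1$.

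We then use the containment condition (ii) of an $N$-tableau to transport these values one row down: $x-s\in R_{i-s}\subseteq R_{i-1-s}$ for $0\le s\le i-2$. So $x\in R_{i-1}$, $x-1\in R_{i-2}$, and so on, down to $x-(i-2)\in R_1$. Let $r'$ be the column of the occurrence of $x$ in row $i-1$; it is unique since rows are strictly increasing. It remains to show $i-1\le g_{r'}$, because then the cell of $x$ in row $i-1$ belongs to $S^f$, i.e.\ $x\in R_{i-1}^f$. If $i=2$ this is immediate from $g_{r'}\ge1$.

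For $i>2$ the key step is Lemma~\ref{lem:successor-above}, applied repeatedly. For each $1\le s\le i-2$, the entry $x-s+1$ occurs in row $i-s$ and the entry $x-s$ occurs in row $i-1-s$. Since these values differ by one, the lemma says $x-s+1$ sits directly above $x-s$ in the tight alignment, so the two cells lie in the same column. Chaining these statements from $s=1$ up to $s=i-2$ shows that $C_{r'}$ has values $x-(i-2),\dots,x-1,x$ in rows $1,\dots,i-1$. Thus $C_{r'}$ climbs by consecutive values through row $i-1$, has no gap at any row $\le i-1$, and $g_{r'}\ge i-1$, as required.

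The main obstacle is the subtlety that $x$ in row $i-1$ generally lies in a different column $r'\neq r$; by Corollary~\ref{cor:higher-left} it is strictly to the right. So fullness of $C_r$ cannot be used directly. The remedy is to rebuild the consecutive chain in the new column from scratch, using containment to supply the values $x-s$ one row lower and Lemma~\ref{lem:successor-above} to force them into a single column. Care is only needed with the index range, so that $x-s$ really lies in $R_{i-1-s}$: this uses $s\le i-2$, which follows from $C_r(i-s)=x-s$ together with $R_{i-s}\subseteq R_{i-1-s}$.
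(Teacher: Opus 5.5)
Your proof is correct and is essentially the paper's argument. Containment shifts the consecutive run of $C_r$ one row down, and Lemma~\ref{lem:successor-above} forces the shifted values into a single column that is gap-free through row $i-1$; the paper packages this chain as an induction on $t$ (showing $a_t\in R_{t-1}^f$), whereas you build the whole chain at once.
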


\begin{proof}
Fix $i>1$ and $a\in R_i^f$, say $a=C_r(i)$ for a column with $i\le
g_r$, i.e.\ $C_r$ has no gap through row $i$. Write $a_t:=C_r(t)$
for $t=1,\ldots,i$, so $a_i=a$ and $a_t=a_{t-1}+1$.

We show by induction on $t=2,\ldots,i$ that $a_t\in R_{t-1}^f$.

\emph{Base case $t=2$}: $a_2\in R_2\subseteq R_1$ ($N$-tableau
containment), and $R_1^f=R_1$ (Definition~\ref{def:full-core}), so
$a_2\in R_1^f$.

\emph{Inductive step}: assume the claim holds for $t-1$, i.e.\
$a_{t-1}\in R_{t-2}^f$: say $a_{t-1}$ occurs at position $q_1$ in
row $t-2$, in a column $C_{r'}$ with $t-2\le g_{r'}$. Since
$a_t\in R_t\subseteq R_{t-1}$, it occurs at some position $q_2$ in
row $t-1$. As $R_{t-2,q_1}+1=a_{t-1}+1=a_t=R_{t-1,q_2}$,
Lemma~\ref{lem:successor-above} (applied to row $t-2$) gives
$\rho_{t-1}(q_2)=q_1$: $(t-1,q_2)$ and $(t-2,q_1)$ lie in the same
column $C_{r'}$, so $C_{r'}(t-1)=a_t$. Since $C_{r'}$ has no gap
through row $t-2$, and $C_{r'}(t-1)=C_{r'}(t-2)+1$, it has none
through row $t-1$ either: $t-1\le g_{r'}$. So $a_t\in R_{t-1}^f$.

Taking $t=i$ gives $a=a_i\in R_{i-1}^f$.
\end{proof}

\begin{corollary}[The full core of an $N$-tableau is an $N$-tableau]
\label{cor:full-core-is-Ntableau}
Let $(T,\rho^{\mathrm{tight}})$ be a tight positioned $N$-tableau and
$T^f$ its full core (Definition~\ref{def:full-core}). Then $T^f$ is
an $N$-tableau.
\end{corollary}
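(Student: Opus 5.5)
We check the three defining conditions of an $N$-tableau for $T^f$, after the empty top rows have been dropped as in Remark~\ref{rem:full-core-empty-rows}. These are: nonempty, strictly increasing rows; containment $R_i^f\subseteq R_{i-1}^f$; and strictly increasing row minima. We take them in turn.

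\emph{Rows.} After the removal in Remark~\ref{rem:full-core-empty-rows}, every remaining row $R_i^f$ is nonempty. Each $R_i^f$ is obtained from $R_i$ by deleting entries and keeping the rest in their original order; this is exactly the restriction of Definition~\ref{def:restriction}, where $\sigma_i$ is order-preserving. Since $R_i$ is strictly increasing, so is $R_i^f$. Rows are elements of $\mathbf{n}$ automatically.

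\emph{Containment.} This is precisely Proposition~\ref{prop:full-core-preserves-containment}. Dropping the empty top rows does not affect it: containment is only needed between consecutive remaining rows, and those are unchanged.

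\emph{Minima.} By Lemma~\ref{lem:positioning-restricts}, the restricted map $\rho$ is a positioning on $T^f$. This remains true after deleting the empty top suffix, since positionings only involve consecutive rows. Remark~\ref{rem:full-core-tight} adds that it is in fact the tight positioning, though this is not needed here. So $T^f$ is a row array with nonempty rows admitting a positioning. By Remark~\ref{rem:positioning-gives-minima}, or directly via Lemma~\ref{lem:positioning-implies-trivial} and (P1) for $\rho^{\mathrm{tr}}$ at $j=1$, we get $\min R_{i-1}^f<\min R_i^f$ for all $i$. Together with containment and strict rows, the characterization recalled before Corollary~\ref{cor:tight-exists-for-Ntableau} then says that $T^f$ is an $N$-tableau.

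There is no real obstacle, since the substantive step, containment, was already done in Proposition~\ref{prop:full-core-preserves-containment}. The only point needing care is bookkeeping. We must confirm that removing the empty suffix leaves a genuine positioned row array: the restricted $\rho_i$ for the remaining rows only reference the remaining rows below them. We must also confirm that row $1$ is nonempty, which holds because $R_1^f=R_1\neq\varnothing$.
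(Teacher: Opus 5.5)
Your proposal is correct and follows the paper's proof essentially step for step. Strictness of rows comes from taking subsequences, containment from Proposition~\ref{prop:full-core-preserves-containment}, and the row-minima condition from the inherited positioning via Remark~\ref{rem:positioning-gives-minima}. Your extra bookkeeping about dropping the empty top suffix is a harmless elaboration of Remark~\ref{rem:full-core-empty-rows}.
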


\begin{proof}
The rows of $T^f$ are subsequences of those of $T$, hence strictly
increasing. They are nonempty (Remark~\ref{rem:full-core-empty-rows})
and $\rho$ is a positioning on $T^f$
(Remark~\ref{rem:full-core-tight}), so the row-minima condition holds
by Remark~\ref{rem:positioning-gives-minima}. Containment is
Proposition~\ref{prop:full-core-preserves-containment}.
\end{proof}

\section{Reducing to the full core}
\label{sec:catalan-image}
Recall from Section~\ref{sec:preliminaries} that $\mathbf{n}=\{1,\ldots,n\}$.

\begin{definition}[The Catalan monoid]
\label{def:catalan-monoid}
We write maps on the right: for
$\alpha:\{0,1,\ldots,n\}\to\{0,1,\ldots,n\}$ and $x\in\{0,1,\ldots,n\}$,
$x\alpha$ denotes the value of $\alpha$ at $x$. A map $\alpha$ is
\emph{order-preserving} if $x\le y$ implies $x\alpha\le y\alpha$, and
\emph{order-decreasing} if $x\alpha\le x$ for every $x\in\{0,1,\ldots,n\}$.
The \emph{Catalan monoid} $\mathrm{Cat}_n$ is the set of all
order-preserving, order-decreasing self-maps of $\{0,1,\ldots,n\}$,
under the product $\alpha\beta$ defined by $x(\alpha\beta):=(x\alpha)\beta$ -- composition read
left to right, first $\alpha$ then $\beta$.
\end{definition}

\begin{definition}[$\epsilon$-maps]
\label{def:delta-map}
For $x\in\{1,\ldots,n\}$, define $\epsilon_x:\{0,1,\ldots,n\}\to
\{0,1,\ldots,n\}$ by $x\epsilon_x:=x-1$ and $y\epsilon_x:=y$ for
every $y\ne x$. Each $\epsilon_x$ lies in $\mathrm{Cat}_n$.
\end{definition}

\begin{theorem}[Volkov \cite{Volkov}]
\label{thm:volkov-homomorphism}
The assignment $x\mapsto\epsilon_x$, for $x\in\{1,\ldots,n\}$, extends
to a surjective monoid homomorphism
\[
\Phi:\mathrm{Styl}_n\twoheadrightarrow\mathrm{Cat}_n.
\]
\end{theorem}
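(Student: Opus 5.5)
The plan is to pass through the free monoid. Let $\hat\Phi:\mathbf{n}^*\to\mathrm{Cat}_n$ be the unique monoid homomorphism with $x\mapsto\epsilon_x$; it exists because $\mathbf{n}^*$ is free. The statement then reduces to three checks: every defining relation of $\mathrm{Styl}_n$ (the Knuth relations (K1), (K2) and the idempotent relations $x^2=x$) holds under $\hat\Phi$; the $\epsilon_x$ lie in $\mathrm{Cat}_n$; and the $\epsilon_x$ generate $\mathrm{Cat}_n$. The first check makes $\hat\Phi$ factor through $\equiv_{\mathrm{styl}}$, and this factorization is $\Phi$. The last check gives surjectivity.

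\textbf{Membership.} This is immediate. $\epsilon_x$ only moves $x$, and sends it to $x-1$, which is weakly below $x$. It is order preserving, since the only value at risk is $x-1$, and $(x-1)\epsilon_x=x-1=x\epsilon_x$. Idempotence $\epsilon_x^2=\epsilon_x$ holds because $x-1$ is fixed by $\epsilon_x$.

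\textbf{Knuth relations.} For these I would evaluate both sides pointwise. Each word $xyz$ moves only points in $\{x,y,z\}$, so it suffices to evaluate the two sides on these points, using right actions.
\begin{itemize}
\item (K1), $yxz=yzx$ with $x<y\le z$. If $y=z$, both sides evaluate to $\epsilon_y\epsilon_x$ after idempotence, since the left side is $\epsilon_y\epsilon_x\epsilon_y$ and the right side is $\epsilon_y\epsilon_y\epsilon_x$. Here one must check $\epsilon_y\epsilon_x\epsilon_y=\epsilon_y\epsilon_x$ for $x<y$. This is the Solomon relation $a_{x+1}a_xa_{x+1}=a_{x+1}a_x$ when $y=x+1$, and it is commutation when $y>x+1$.
\item If $y<z$, then either $z>y+1$, so $\epsilon_z$ commutes with both $\epsilon_x$ and $\epsilon_y$, or $z=y+1$. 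In the latter case I compute directly on the points $x,x-1,y,y-1,z$, split by whether $x=y-1$.
\item (K2) is handled the same way.
\end{itemize}
A cleaner uniform route is to cite Solomon's presentation as in the paper's introduction: reduce each Knuth relation to consequences of $a_i^2=a_i$, $a_ia_k=a_ka_i$ for $|i-k|\ge2$, and $a_ia_{i+1}a_i=a_{i+1}a_ia_{i+1}=a_{i+1}a_i$. Either way, this is the routine core of the proof.

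\textbf{Surjectivity.} This is the main obstacle if one does not simply cite Solomon's result that the $a_i$ generate $\mathrm{Cat}_n$. Directly, I would induct on $\sum_x(x-x\alpha)$. Given $\alpha\ne\mathrm{id}$, let $x$ be the largest point with $x\alpha<x$. I would write $\alpha=\epsilon_x\beta$, where $\beta$ agrees with $\alpha$ except $x\beta=x\alpha$ and $(x-1)\beta$ unchanged. This needs care: $\beta$ should be $\alpha$ with the value at $x$ read through $x-1$.

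The cleaner choice is $\beta$ defined by $y\beta=y\alpha$ for $y\ne x$ and $x\beta=x$. Then $\beta\in\mathrm{Cat}_n$ by maximality of $x$. Also $\beta\epsilon$-type factorizations $\alpha=\beta\cdot w$ need $w$ to carry $x$ down to $x\alpha$. This is achieved by $\epsilon_x\epsilon_{x-1}\cdots\epsilon_{x\alpha+1}$, applied after $\beta$. Here I must verify it does not disturb the other values $y\alpha$. For $y<x$ these are $\le(x-1)\alpha\le x\alpha$, so they lie below the moved range and are fixed. For $y>x$ the values are fixed points above $x$ and are untouched. This verification is the delicate step.

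Then $\beta$ has smaller defect, and induction finishes.
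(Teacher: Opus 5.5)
Your proposal is correct and follows the route the paper attributes to Volkov. You check that the $\epsilon_x$ satisfy the defining relations of $\mathrm{Styl}_n$: idempotence, commutation of $\epsilon_x,\epsilon_z$ for $|x-z|\ge 2$, and $\epsilon_{x+1}\epsilon_x\epsilon_{x+1}=\epsilon_x\epsilon_{x+1}\epsilon_x=\epsilon_{x+1}\epsilon_x$, which together cover (K1) and (K2). Surjectivity then comes from the $\epsilon_x$ generating $\mathrm{Cat}_n$.

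The one difference is that you prove generation yourself rather than citing Solomon. Your final factorization $\alpha=\beta\,\epsilon_x\epsilon_{x-1}\cdots\epsilon_{x\alpha+1}$, with $x$ the largest moved point and $\beta$ equal to $\alpha$ except $x\beta=x$, is valid and strictly lowers the defect. In a write-up, delete the abandoned $\alpha=\epsilon_x\beta$ attempt. Also, the subcase $z=y+1$ of (K1) needs no computation, since $x<y<z$ already forces $z\ge x+2$, so $\epsilon_x$ and $\epsilon_z$ commute.
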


Given an $N$-tableau $T$,
$\Phi(T)$ is concretely computable: take any word representing $T$'s
class in $\mathrm{Styl}_n$ -- its row word or column word
(Section~\ref{sec:preliminaries}) both work -- and compose the maps
$\epsilon_x$ (Definition~\ref{def:delta-map}) over that word's
letters $x$, in order. Our goal is instead a visual, graphical
description of $\Phi$: to be able to look at the picture of an
$N$-tableau $T$ and read off directly which element of
$\mathrm{Cat}_n$ it is, without going through reading a row or column
word.

Let $T$ be an $N$-tableau and $T^f$ its full core
(Definition~\ref{def:full-core}). Our next step, to which the
remainder of this section is devoted, is to show that
$\Phi(T)=\Phi(T^f)$ (Theorem~\ref{thm:phi-full-core}): passing to the
full core leaves the image under $\Phi$ unchanged. To prove it we
compute both sides through column words. By
Corollary~\ref{cor:col-well-defined}, the column word of any
positioning of an $N$-tableau represents it in $\mathrm{Styl}_n$. We
use the tight positioning $\rho$ of $T$, on which the full core is
built, and the tight positioning that $T^f$ inherits from it, also
denoted $\rho$ (Remark~\ref{rem:full-core-tight}), and show that
$\Phi(\mathrm{col}(T,\rho))=\Phi(\mathrm{col}(T^f,\rho))$.

For elements $a\le b$ of $\mathbf{n}$, we write
$v_{b,a}:=b\,(b-1)\cdots a$ for the word listing $a,\ldots,b$ in
decreasing order, and
$\epsilon_{b,a}:=\Phi(v_{b,a})=\epsilon_b\,\epsilon_{b-1}\cdots\epsilon_a$
for its image in $\mathrm{Cat}_n$. These abbreviations will be
convenient in the statements below.

\begin{lemma}[A descending run collapses to one less than its base]
\label{lem:run-collapses-interval}
Let $a\le b$ be elements of $\mathbf{n}$. Then $y\epsilon_{b,a}=a-1$ for every $y\in\{a,\ldots,b\}$, and
$y\epsilon_{b,a}=y$ for every $y\in\{0,\ldots,n\}\setminus\{a,\ldots,b\}$.
\end{lemma}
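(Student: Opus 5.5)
We argue by induction on $b-a$, tracing $y$ through the product $\epsilon_{b,a}=\epsilon_b\epsilon_{b-1}\cdots\epsilon_a$ applied left to right (Definition~\ref{def:catalan-monoid}). Since $\epsilon_{b,a}=\epsilon_{b,a+1}\,\epsilon_a$ when $a<b$, the induction reduces everything to a single application of $\epsilon_a$ after $\epsilon_{b,a+1}$.

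\emph{Base case} $a=b$: here $\epsilon_{b,a}=\epsilon_a$. By Definition~\ref{def:delta-map}, $a\epsilon_a=a-1$ and every other point is fixed, which is exactly the claim.

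\emph{Inductive step} $a<b$. By the induction hypothesis applied to the pair $a+1\le b$, the map $\epsilon_{b,a+1}$ sends every $y\in\{a+1,\ldots,b\}$ to $a$ and fixes every other point. We then apply $\epsilon_a$ and split into three cases.
\begin{enumerate}
\item[(i)] If $y\in\{a+1,\ldots,b\}$, then $y\epsilon_{b,a+1}=a$, and $a\epsilon_a=a-1$.
\item[(ii)] If $y=a$, then $y$ is fixed by $\epsilon_{b,a+1}$, since $a\notin\{a+1,\ldots,b\}$, and then $a\epsilon_a=a-1$.
\item[(iii)] If $y\notin\{a,\ldots,b\}$, then $y$ is fixed by $\epsilon_{b,a+1}$, and $y\ne a$, so $y$ is also fixed by $\epsilon_a$.
\end{enumerate}
Cases (i) and (ii) show that every $y\in\{a,\ldots,b\}$ is sent to $a-1$, and case (iii) shows that every other point is fixed. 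This completes the induction.

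No step is a real obstacle. The only point needing care is the left-to-right composition convention: because $\epsilon_b$ is applied first, each point moved so far is carried down one step at a time until it reaches $a-1$.
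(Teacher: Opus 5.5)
Your proof is correct and essentially matches the paper's argument. The paper traces $y$ directly through $\epsilon_b,\ldots,\epsilon_a$: $y$ is fixed until $\epsilon_y$, then lowered by one at each of $\epsilon_y,\ldots,\epsilon_a$. Your induction on $b-a$, peeling off the last factor via $\epsilon_{b,a}=\epsilon_{b,a+1}\epsilon_a$, is a formalized version of that same trace.
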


\begin{proof}
If $y\notin\{a,\ldots,b\}$, no letter of $v_{b,a}$ equals $y$, so every
$\epsilon_c$ in $\epsilon_{b,a}$ fixes $y$, giving $y\epsilon_{b,a}=y$.

Now let $y\in\{a,\ldots,b\}$. The maps $\epsilon_b,\ldots,\epsilon_{y+1}$,
applied first, fix $y$; then $\epsilon_y,\epsilon_{y-1},\ldots,\epsilon_a$
send it to $y-1,y-2,\ldots,a-1$ in turn. So $y\epsilon_{b,a}=a-1$.
\end{proof}

\begin{lemma}[A word avoiding $a-1$ preserves $\{a-1,\ldots,b\}$]
\label{lem:absorbing-interval}
Let $a\le b$ be elements of $\mathbf{n}$, and let $w:=x_1x_2\cdots x_k$
be a word over $\mathbf{n}$ with $x_i\ne a-1$ for every $i$. Write
$\alpha:=\Phi(w)=\epsilon_{x_1}\epsilon_{x_2}\cdots\epsilon_{x_k}$.
Then $y\alpha\in\{a-1,\ldots,b\}$ for every $y\in\{a-1,\ldots,b\}$.
\end{lemma}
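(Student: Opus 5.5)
The plan is to argue one letter at a time, by induction on $k$. It suffices to show that each single map $\epsilon_x$ with $x\ne a-1$ sends the interval $I:=\{a-1,\ldots,b\}$ into itself. Once this is known, $\alpha=\epsilon_{x_1}\cdots\epsilon_{x_k}$ is a composition of maps each preserving $I$. With the right-action convention of Definition~\ref{def:catalan-monoid}, $y\alpha=(\cdots(y\epsilon_{x_1})\cdots)\epsilon_{x_k}$, and an immediate induction on $k$ keeps the running value inside $I$. The case $k=0$ is trivial, since then $\alpha$ is the identity.

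For the single-letter claim, fix $x\in\mathbf{n}$ with $x\ne a-1$ and $y\in I$. By Definition~\ref{def:delta-map}, if $y\ne x$ then $y\epsilon_x=y\in I$. If $y=x$, then $y\epsilon_x=x-1$, and we must check that $x-1\in I$. We have $x=y\le b$, so $x-1\le b$. We also have $x\ge a-1$ and $x\ne a-1$, hence $x\ge a$ and $x-1\ge a-1$. So $x-1\in I$.

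There is no real obstacle here. The only point needing care is the lower endpoint: the unique way to leave $I$ downward is to apply $\epsilon_{a-1}$ to $a-1$, and the hypothesis excludes exactly this letter. The upper endpoint causes no trouble, since every $\epsilon_x$ is order-decreasing. This is precisely the $y=x$ case handled above.
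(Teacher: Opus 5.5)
Your proposal is correct and follows essentially the same route as the paper: induction on $k$, where the only nontrivial case is a letter $x_k$ equal to the current value, and the hypothesis $x_k\ne a-1$ then forces that value into $\{a,\ldots,b\}$, so its image $x_k-1$ stays in $\{a-1,\ldots,b\}$. The only difference is presentational: you first isolate the single-letter invariance claim and then compose, while the paper does the same check inline within the induction step.
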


\begin{proof}
Induction on $k$; trivial for $k=0$ ($\alpha$ is the identity).
For $k\ge1$, write $\alpha':=\Phi(x_1\cdots x_{k-1})$, so
$\alpha=\alpha'\epsilon_{x_k}$, and let
$y':=y\alpha'\in\{a-1,\ldots,b\}$ by the induction hypothesis. If
$x_k\ne y'$, then $y'\epsilon_{x_k}=y'\in\{a-1,\ldots,b\}$. If
$x_k=y'$: since $x_k\ne a-1$ and $y'\in\{a-1,\ldots,b\}$, in
fact $y'\in\{a,\ldots,b\}$, so $y'\epsilon_{x_k}=y'-1\in
\{a-1,\ldots,b-1\}\subseteq\{a-1,\ldots,b\}$. Either way
$y\alpha=y'\epsilon_{x_k}\in\{a-1,\ldots,b\}$.
\end{proof}

\begin{lemma}[Deleting a repeated descending run]
\label{lem:delete-repeated-run}
Let $a\le b$ be elements of $\mathbf{n}$, and let $w$ be a word over
$\mathbf{n}$ that does not contain the letter $a-1$. Then
\[
\Phi(v_{b,a}\,w\,v_{b,a})=\Phi(w\,v_{b,a}).
\]
\end{lemma}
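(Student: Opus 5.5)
We compare the two maps pointwise on $\{0,\ldots,n\}$. Write $\beta:=\Phi(w)$ and $\gamma:=\epsilon_{b,a}$. The left side is $\gamma\beta\gamma$ and the right side is $\beta\gamma$, so for each $y$ we must show $y\gamma\beta\gamma=y\beta\gamma$. Lemma~\ref{lem:run-collapses-interval} splits the domain into two cases: $y\in\{a,\ldots,b\}$ and $y\notin\{a,\ldots,b\}$.

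\emph{Case $y\notin\{a,\ldots,b\}$.} By Lemma~\ref{lem:run-collapses-interval}, $y\gamma=y$. Hence $y\gamma\beta\gamma=y\beta\gamma$ immediately, with nothing further to check.

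\emph{Case $y\in\{a,\ldots,b\}$.} Lemma~\ref{lem:run-collapses-interval} gives $y\gamma=a-1$. Since $w$ avoids the letter $a-1$, Lemma~\ref{lem:absorbing-interval} shows that $\beta$ maps $\{a-1,\ldots,b\}$ into itself. So $(a-1)\beta\in\{a-1,\ldots,b\}$, and applying $\gamma$ sends it to $a-1$: either it equals $a-1$, which $\gamma$ fixes as $a-1\notin\{a,\ldots,b\}$, or it lies in $\{a,\ldots,b\}$, which $\gamma$ sends to $a-1$. Thus the left side is $a-1$. On the right, $y\in\{a,\ldots,b\}\subseteq\{a-1,\ldots,b\}$, so again $y\beta\in\{a-1,\ldots,b\}$, and the same argument gives $y\beta\gamma=a-1$. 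The two sides agree.

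There is one edge case: $a=1$, where $a-1=0$ is not a letter. The hypothesis on $w$ is then vacuous, and Lemma~\ref{lem:absorbing-interval} still applies, since its proof only uses $x_k\ne a-1$, which holds automatically because letters lie in $\mathbf{n}$. No step is a real obstacle; the only point needing care is noting that $\gamma$ fixes $a-1$ while collapsing $\{a,\ldots,b\}$ onto it. Since $\Phi$ is a homomorphism, $\Phi(v_{b,a}\,w\,v_{b,a})=\gamma\beta\gamma$ and $\Phi(w\,v_{b,a})=\beta\gamma$, which concludes the argument.
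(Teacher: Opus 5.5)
Your proposal is correct and follows essentially the same route as the paper: the same pointwise comparison, the same case split from Lemma~\ref{lem:run-collapses-interval}, and the same use of Lemma~\ref{lem:absorbing-interval} to show that both sides land in $\{a-1,\ldots,b\}$ before the final $\epsilon_{b,a}$ collapses them to $a-1$. Your separate remark on $a=1$ is harmless but unnecessary, since Lemma~\ref{lem:absorbing-interval} as stated already covers that case.
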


\begin{proof}
Write $\beta:=\epsilon_{b,a}=\Phi(v_{b,a})$ and $\alpha:=\Phi(w)$. Since $\Phi$ is a
homomorphism (Theorem~\ref{thm:volkov-homomorphism}),
$\Phi(v_{b,a}\,w\,v_{b,a})=\beta\alpha\beta$ and
$\Phi(w\,v_{b,a})=\alpha\beta$,
so we must show $y\beta\alpha\beta=y\alpha\beta$ for every
$y\in\{0,\ldots,n\}$. By
Lemma~\ref{lem:run-collapses-interval}, $\beta$ sends every element of
$\{a,\ldots,b\}$ to $a-1$ and fixes everything else.

If $y\notin\{a,\ldots,b\}$, then $y\beta=y$, so both sides equal
$y\alpha\beta$. If $y\in\{a,\ldots,b\}$, then $y\beta=a-1$, so
$y\beta\alpha=(a-1)\alpha$. Since both $y$ and $a-1$ lie in
$\{a-1,\ldots,b\}$, Lemma~\ref{lem:absorbing-interval} gives
$y\alpha,\,y\beta\alpha\in\{a-1,\ldots,b\}$. By
Lemma~\ref{lem:run-collapses-interval}, $\beta$ sends every element of
this set to $a-1$ (it fixes $a-1$ and sends $a,\ldots,b$ to $a-1$).
Hence $y\beta\alpha\beta=a-1=y\alpha\beta$.
\end{proof}

\begin{definition}[Removable factor]
\label{def:removable}
Let $W=u_1\,B\,u_2$ be a word over $\mathbf{n}$, where $B=v_{b,a}$ for
some $a\le b$. The factor $B$ is \emph{removable} in $W$ if it is
followed later in $W$ by another factor $v_{b,a}$, with no letter $a-1$
in between; that is, if $u_2=w\,v_{b,a}\,u_3$ with $w$ not containing
$a-1$.
\end{definition}

Whether $B$ is removable depends only on the suffix $B\,u_2$: if $B$ is
removable in $u_1\,B\,u_2$, it is removable in $u\,B\,u_2$ for every
word $u$. By Lemma~\ref{lem:delete-repeated-run}, since $\Phi$ is a
homomorphism, deleting a removable factor does not change $\Phi$:
$\Phi(u_1\,v_{b,a}\,w\,v_{b,a}\,u_3)=\Phi(u_1\,w\,v_{b,a}\,u_3)$.

\begin{corollary}[Deleting removable factors]
\label{cor:delete-several-runs}
Let $W=u_0\,B_1\,u_1\,B_2\,u_2\cdots B_N\,u_N$ be a word over
$\mathbf{n}$ in which each $B_s$ is a removable factor. Then
$\Phi(W)=\Phi(u_0\,u_1\cdots u_N)$.
\end{corollary}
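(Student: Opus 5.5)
We delete the factors one at a time, \emph{from left to right}, and use the observation just made after Definition~\ref{def:removable}: whether a factor $B$ is removable depends only on the suffix of $W$ that begins with $B$. Deleting a factor changes only the part of the word to its left as seen from any later factor. So every later factor stays removable, and its witness (a later occurrence of the same descending run with no intermediate occurrence of the forbidden letter) is untouched.

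Concretely, we argue by induction on $N$; the case $N=0$ is trivial. For $N\ge1$, write $W=u_0\,B_1\,W'$ with $W'=u_1\,B_2\,u_2\cdots B_N\,u_N$, and $B_1=v_{b,a}$. Since $B_1$ is removable in $W$, we have $W'=w\,v_{b,a}\,u'$ for some word $w$ avoiding $a-1$. The remark following Definition~\ref{def:removable}, which rests on Lemma~\ref{lem:delete-repeated-run} and the fact that $\Phi$ is a homomorphism, then gives
\[
\Phi(W)=\Phi(u_0\,w\,v_{b,a}\,u')=\Phi(u_0\,W').
\]

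Now consider $W_1:=u_0\,W'=(u_0u_1)\,B_2\,u_2\cdots B_N\,u_N$. For each $s\ge2$, the suffix of $W_1$ starting at $B_s$ is literally the same as the suffix of $W$ starting at $B_s$. Since removability depends only on that suffix, each $B_s$ with $s\ge2$ is still removable in $W_1$. The word $W_1$ has the required shape with $N-1$ removable factors and first segment $u_0u_1$. The induction hypothesis therefore gives $\Phi(W_1)=\Phi(u_0u_1\,u_2\cdots u_N)$, and combining the two equalities finishes the proof.

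The only point needing care is the choice of order. If we deleted right to left, the witness run for $B_s$ might itself be some $B_t$ with $t>s$ that had already been removed, and removability could then fail. Proceeding left to right avoids this, because each deletion only alters material to the left of all remaining factors $B_s$.
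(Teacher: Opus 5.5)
Your proof is correct and is essentially the paper's own argument: you induct on $N$, delete $B_1$ first using Lemma~\ref{lem:delete-repeated-run}, and note that for $s\ge2$ the suffix of $W_1=(u_0u_1)B_2u_2\cdots B_Nu_N$ beginning at $B_s$ coincides with the corresponding suffix of $W$, so each remaining $B_s$ stays removable. Your closing remark on why the left-to-right order matters is a correct aside that the paper leaves implicit.
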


\begin{proof}
Induction on $N$; for $N=0$ there is nothing to prove. Deleting $B_1$
gives $W_1:=(u_0u_1)\,B_2\,u_2\cdots B_N\,u_N$, and
$\Phi(W_1)=\Phi(W)$, since $B_1$ is removable. For $s\ge2$, the suffix of
$W_1$ starting at $B_s$ is the same as in $W$, so $B_s$ is still
removable in $W_1$. By induction, $\Phi(W_1)=\Phi(u_0\,u_1\cdots u_N)$.
\end{proof}

We now return to $N$-tableaux, and show below
(Proposition~\ref{prop:deletion-preserves-phi}) that every run above a
gap in a column forms a removable factor of the column word.

\begin{lemma}[A copy of a run above a gap]
\label{lem:run-copies}
Let $T=(R_1,\ldots,R_m)$ be an $N$-tableau equipped with its tight
positioning $\rho=\rho^{\mathrm{tight}}$
(Corollary~\ref{cor:tight-exists-for-Ntableau}). Let $C_r$ be a column,
$i>1$, and $a\le b$ with $k:=b-a$, such that $C_r(i+\ell)=a+\ell$ for
$\ell=0,\ldots,k$ and $C_r(i-1)\ne a-1$. Then (see Figure~\ref{fig:run-copy}):
\begin{enumerate}
\item[(a)] the run is repeated one row lower, in a column to the right
of $C_r$: there is a column $C_{r'}$ with $r<r'$ such that
$C_{r'}(i-1+\ell)=a+\ell$ for $\ell=0,\ldots,k$ -- we call these cells
the \emph{copy} of the run;
\item[(b)] every occurrence of the letter $a-1$ is either directly
below the copy or in a column further right: it lies in a column
$C_{r''}$ with $r''\ge r'$, and if $r''=r'$, then $C_{r'}(i-2)=a-1$.
\end{enumerate}
\end{lemma}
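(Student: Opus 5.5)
Both parts rest on three earlier facts: the containment $R_h\subseteq R_{h-1}$ of an $N$-tableau, Lemma~\ref{lem:successor-above} (in the tight positioning, $x+1$ in row $h+1$ sits directly above $x$ in row $h$), and Corollary~\ref{cor:higher-left} (a letter moves strictly to the right as we go down rows).

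\emph{Part (a).} For $\ell=0,\ldots,k$ we have $a+\ell=C_r(i+\ell)\in R_{i+\ell}\subseteq R_{i-1+\ell}$. So $a+\ell$ occurs in row $i-1+\ell$, and it occurs there exactly once, since rows are strictly increasing.
\begin{itemize}
\item Let $C_{r'}$ be the column containing the occurrence of $a$ in row $i-1$. The letter $a$ also occurs in row $i$, in column $C_r$. Since $i-1<i$, Corollary~\ref{cor:higher-left} gives $r<r'$.
\item Induct on $\ell$. Suppose $a+\ell-1$ sits in $C_{r'}$ at row $i-2+\ell$. The occurrence of $a+\ell$ in row $i-1+\ell$ is exactly one larger, so Lemma~\ref{lem:successor-above} places it directly above, in the same column.
\end{itemize}
Hence $C_{r'}(i-1+\ell)=a+\ell$ for $\ell=0,\ldots,k$, which is (a).

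\emph{Part (b), first step.} We show that $a-1$ never occurs in row $i-1$ or above. In the tight positioning, the cell $(i,j)$ holding $C_r(i)=a$ sits above the largest entry of $R_{i-1}$ smaller than $a$, and that entry is $C_r(i-1)$. If $a-1\in R_{i-1}$, this largest entry would be $a-1$, contradicting the hypothesis $C_r(i-1)\ne a-1$. So $a-1\notin R_{i-1}$. By containment, $a-1\notin R_h$ for every $h\ge i-1$. This step is the only place where the hypothesis $C_r(i-1)\ne a-1$ is used, and it is the point I expect to be the crux.

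\emph{Part (b), second step.} Take any occurrence of $a-1$; by the first step it lies in some row $h\le i-2$. (If $a=1$ the letter $a-1=0$ is not in $\mathbf{n}$ and (b) is vacuous.) Since $h+1\le i-1$ and $a\in R_{i-1}$, containment gives $a\in R_{h+1}$. By Lemma~\ref{lem:successor-above}, this occurrence of $a$ sits directly above the $a-1$ in row $h$, so both lie in the same column $C_{r''}$. Now compare the occurrence of $a$ in row $h+1$ (column $r''$) with the occurrence of $a$ in row $i-1$ (column $r'$), using Corollary~\ref{cor:higher-left}:
\begin{itemize}
\item If $h+1<i-1$, then $r''>r'$.
\item If $h+1=i-1$, the two occurrences coincide, so $r''=r'$, and then $C_{r'}(i-2)=a-1$.
\end{itemize}
Conversely, $r''=r'$ forces $h+1=i-1$, so in that case $C_{r'}(i-2)=a-1$. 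This gives (b).
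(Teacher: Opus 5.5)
Your proposal is correct and follows the paper's proof essentially step for step: containment plus Lemma~\ref{lem:successor-above} to build the copy, the tight-positioning argument that $a-1\notin R_{i-1}$, and Corollary~\ref{cor:higher-left} to place every occurrence of $a-1$ in a column $C_{r''}$ with $r''\ge r'$. The only cosmetic difference is that in (a) you obtain $r<r'$ from Corollary~\ref{cor:higher-left} rather than directly from Lemma~\ref{lem:equal-entry-right}, which is equally valid.
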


\begin{figure}[H]
\centering
\begin{tikzpicture}[scale=0.7, every node/.style={font=\small}]
\draw (0,0) rectangle (1.4,1); \node at (0.7,0.5) {$\ast$};
\draw (0,1) rectangle (1.4,2); \node at (0.7,1.5) {$\ast$};
\draw (0,2) rectangle (1.4,3); \node at (0.7,2.5) {$\ast$};
\draw (0,3) rectangle (1.4,4); \node at (0.7,3.5) {$\ast$};
\draw (0,4) rectangle (1.4,5); \node at (0.7,4.55) {$\vdots$};
\draw (0,5) rectangle (1.4,6); \node at (0.7,5.5) {$\ast$};
\draw (1.4,0) rectangle (2.8,1); \node at (2.1,0.5) {$\ast$};
\draw (1.4,1) rectangle (2.8,2); \node at (2.1,1.5) {$\ast$};
\draw (1.4,2) rectangle (2.8,3); \node at (2.1,2.5) {$c$};
\draw[fill=gray!30] (1.4,3) rectangle (2.8,4); \node at (2.1,3.5) {$a$};
\draw[fill=gray!30] (1.4,4) rectangle (2.8,5); \node at (2.1,4.55) {$\vdots$};
\draw[fill=gray!30] (1.4,5) rectangle (2.8,6); \node at (2.1,5.5) {$b$};
\draw (2.8,0) rectangle (4.2,1); \node at (3.5,0.5) {$\ast$};
\draw (2.8,1) rectangle (4.2,2); \node at (3.5,1.5) {$\ast$};
\draw (2.8,2) rectangle (4.2,3); \node at (3.5,2.5) {$\ast$};
\draw (4.2,0) rectangle (5.6,1); \node at (4.9,0.5) {$\ast$};
\draw (4.2,1) rectangle (5.6,2); \node at (4.9,1.5) {$a-1$};
\draw[fill=gray!30] (4.2,2) rectangle (5.6,3); \node at (4.9,2.5) {$a$};
\draw[fill=gray!30] (4.2,3) rectangle (5.6,4); \node at (4.9,3.55) {$\vdots$};
\draw[fill=gray!30] (4.2,4) rectangle (5.6,5); \node at (4.9,4.5) {$b$};
\draw (5.6,0) rectangle (7,1); \node at (6.3,0.5) {$a-1$};
\draw (5.6,1) rectangle (7,2); \node at (6.3,1.5) {$\ast$};
\draw (7,0) rectangle (8.4,1); \node at (7.7,0.5) {$\ast$};
\node at (2.1,-0.5) {$C_r$};
\node at (4.9,-0.5) {$C_{r'}$};
\node at (6.3,-0.5) {$C_{r''}$};
\end{tikzpicture}
\caption{Lemma~\ref{lem:run-copies}, schematically. The run (shaded,
in $C_r$) sits above an entry $c\ne a-1$; its copy (shaded, in
$C_{r'}$) is one row lower. An occurrence of $a-1$ is either directly
below the copy or in a column $C_{r''}$ further right.}
\label{fig:run-copy}
\end{figure}

\begin{proof}
(a) For $\ell=0,\ldots,k$, containment gives
$a+\ell\in R_{i+\ell}\subseteq R_{i-1+\ell}$; as rows are strictly
increasing, $a+\ell$ occurs in $R_{i-1+\ell}$ at a unique position
$q_\ell$. For $\ell\ge1$, since
$R_{i-2+\ell,q_{\ell-1}}+1=a+\ell=R_{i-1+\ell,q_\ell}$,
Lemma~\ref{lem:successor-above} puts $(i-1+\ell,q_\ell)$ directly above
$(i-2+\ell,q_{\ell-1})$, in the same column. So these cells lie in a
single column $C_{r'}$, i.e.\ $C_{r'}(i-1+\ell)=a+\ell$ for
$\ell=0,\ldots,k$. Since $C_r(i)=a=C_{r'}(i-1)$,
Lemma~\ref{lem:equal-entry-right} gives $r<r'$.

(b) First, $a-1\notin R_{i-1}$: otherwise $a-1$ would be the largest
entry of $R_{i-1}$ smaller than $a$, so the tight positioning would
place the entry $a$ of row $i$ directly above it, i.e.\
$C_r(i-1)=a-1$, contrary to hypothesis. By containment, $a-1$ then
occurs in no row $h\ge i-1$. So let $a-1$ occur at a cell $(t-1,q)$
with $t\le i-1$, in a column $C_{r''}$. Since $R_{t-1,q}+1=a$,
Lemma~\ref{lem:successor-above} puts the $a$ of row $t$ directly above
$(t-1,q)$, in $C_{r''}$. Comparing it with the $a$ of row $i-1$, which
lies in $C_{r'}$, Corollary~\ref{cor:higher-left} gives $r''>r'$
if $t<i-1$, and $r''=r'$ if $t=i-1$. In the latter case
$(t-1,q)=(i-2,q)$ lies in $C_{r'}$, i.e.\ $C_{r'}(i-2)=a-1$.
\end{proof}

\begin{proposition}[A run above a gap and its copy in the column word]
\label{prop:deletion-preserves-phi}
Let $T=(R_1,\ldots,R_m)$ be an $N$-tableau equipped with its tight
positioning $\rho=\rho^{\mathrm{tight}}$, and let $C_r$, $i$, $a\le b$
be as in Lemma~\ref{lem:run-copies}. Then the letters of the run in
$C_r$ form a removable factor of $\mathrm{col}(T,\rho)$
(Definition~\ref{def:removable}), the later factor being formed by the
letters of its copy in $C_{r'}$ (Lemma~\ref{lem:run-copies}(a)).
\end{proposition}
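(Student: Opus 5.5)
We have to check three things. First, the run letters in $C_r$ form a consecutive factor $v_{b,a}$ of $\mathrm{col}(T,\rho)$. Second, the copy letters in $C_{r'}$ form a later factor $v_{b,a}$. Third, no letter $a-1$ lies strictly between these two factors.

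\emph{The run is a factor $v_{b,a}$.} The word $w(C_r)$ reads the column from its top down to row $1$. The run occupies rows $i,\ldots,i+k$ of $C_r$, so it is a factor of $w(C_r)$ as long as nothing in $C_r$ sits between those cells. That holds because a column has exactly one cell in each row up to its top. The run might not reach the top of $C_r$: there could be further entries above row $i+k$. Those entries are still read before the run, so the run is still a contiguous factor $b\,(b-1)\cdots a=v_{b,a}$ of $w(C_r)$.

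\emph{The copy is a later factor $v_{b,a}$.} By Lemma~\ref{lem:run-copies}(a), the copy occupies rows $i-1,\ldots,i-1+k$ of $C_{r'}$ with values $a,\ldots,b$. By the same argument it is a factor $v_{b,a}$ of $w(C_{r'})$. Since $r<r'$, the block $w(C_{r'})$ comes after $w(C_r)$ in $\mathrm{col}(T,\rho)$, so the copy is a later factor.

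\emph{No $a-1$ in between.} The letters strictly between the two factors fall into three groups. The first is the part of $w(C_r)$ below the run, namely rows $i-1,\ldots,1$ of $C_r$. The second is the full words of the columns $C_s$ with $r<s<r'$. The third is the part of $w(C_{r'})$ above the copy, namely rows above $i-1+k$ of $C_{r'}$. By Lemma~\ref{lem:run-copies}(b), every occurrence of $a-1$ lies in a column $C_{r''}$ with $r''\ge r'$, so the first two groups contain no $a-1$. If $r''=r'$, that same lemma puts the occurrence at row $i-2$ of $C_{r'}$. That cell is below the copy, so it is read after the copy and cannot fall in the third group. Since rows are strictly increasing, $a-1$ appears at most once per row. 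Hence the unique cell of $C_{r'}$ with value $a-1$ is at row $i-2$; any other $a-1$ lies in a column strictly to the right of $C_{r'}$.

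By Definition~\ref{def:removable}, the run in $C_r$ is therefore a removable factor.

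The step that needs the most care is the bookkeeping of exactly which letters lie between the two factors, in particular the portion of $C_{r'}$ above the copy and the case $r''=r'$. Lemma~\ref{lem:run-copies}(b) is set up precisely to handle this.
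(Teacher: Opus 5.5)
Your proof is correct and is essentially the paper's argument. Both factors are contiguous because each occupies consecutive rows of a single column, $r<r'$ puts the copy later, and Lemma~\ref{lem:run-copies}(b) places every $a-1$ either in a column to the right of $C_{r'}$ or at row $i-2$ of $C_{r'}$, below the copy. One cosmetic slip: $C_{r'}$ contains $a-1$ at most once because columns strictly increase by (P1), not because rows do, and that sentence is redundant anyway.
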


\begin{proof}
The run occupies consecutive rows of $C_r$, and a column is read from
its top row down (Definition~\ref{def:column-word}), so its letters
form a factor $v_{b,a}$ of $\mathrm{col}(T,\rho)$. The copy of
Lemma~\ref{lem:run-copies}(a) occupies consecutive rows of the single
column $C_{r'}$, so it also forms a factor $v_{b,a}$; since $r<r'$, it
comes after the first one.

By Lemma~\ref{lem:run-copies}(b), every occurrence of $a-1$ lies in a
column $C_{r''}$ with $r''\ge r'$; if $r''>r'$ it comes after the whole
column $C_{r'}$, and if $r''=r'$ it is $C_{r'}(i-2)$, which lies
below the copy. Either way it comes after the copy, so there is no
$a-1$ between the two factors.
\end{proof}

We can now prove the main result of this section.

\begin{theorem}[Passing to the full core does not change $\Phi$]
\label{thm:phi-full-core}
Let $(T,\rho)$ be a tight positioned $N$-tableau and $T^f$ its full
core, with its tight positioning, also denoted $\rho$
(Remark~\ref{rem:full-core-tight}). Then
\[
\Phi(\mathrm{col}(T,\rho))=\Phi(\mathrm{col}(T^f,\rho)).
\]
\end{theorem}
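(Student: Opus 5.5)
The plan is to show that the column word of $(T^f,\rho)$ is obtained from $\mathrm{col}(T,\rho)$ by deleting a family of removable factors, and then to invoke Corollary~\ref{cor:delete-several-runs}.

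First, compare the two column words column by column. By Definition~\ref{def:full-core} and Lemma~\ref{lem:restriction-keeps-columns}, every surviving entry stays in its column, and row $1$ is untouched. So the $r$-th column of $(T^f,\rho)$ consists exactly of the cells of $C_r$ in rows $1,\ldots,g_r$. Hence $w(C_r^f)$ is the suffix of $w(C_r)$ obtained by erasing the top $|C_r|-g_r$ letters, namely the cells above the first gap. Dropping the empty top rows (Remark~\ref{rem:full-core-empty-rows}) does not affect the column word.

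Next, for each non-full column $C_r$, cut the deleted part into maximal consecutive runs. Reading upward from row $g_r+1$, the deleted cells split into blocks of rows $i,\ldots,i+k$ such that
\begin{itemize}
\item $C_r(i+\ell)=a+\ell$ for $\ell=0,\ldots,k$,
\item $C_r(i-1)\ne a-1$, i.e.\ there is a gap at row $i$.
\end{itemize}
These blocks are the maximal intervals between successive gaps. The first block starts at row $i=g_r+1>1$, and every later block also begins at a gap. Each block therefore satisfies the hypotheses of Lemma~\ref{lem:run-copies}. In the column word each block reads as a factor $v_{b,a}$ with $b=a+k$, and the blocks of $C_r$ are adjacent factors that together make up the erased prefix of $w(C_r)$. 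By Proposition~\ref{prop:deletion-preserves-phi}, each block is a removable factor of $\mathrm{col}(T,\rho)$.

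The only point needing care is that Corollary~\ref{cor:delete-several-runs} requires the decomposition $W=u_0B_1u_1\cdots B_Nu_N$ with each $B_s$ removable in $W$ itself. The corollary's proof then handles the fact that after deleting $B_1,\ldots,B_{s-1}$ the factor $B_s$ is still removable, since its suffix is unchanged. So I only need each block to be removable in the original word, which the proposition gives. The blocks are disjoint factors, so listing them in reading order yields the required form, with some $u_s$ possibly empty. Deleting them all produces exactly $\mathrm{col}(T^f,\rho)$, and the corollary gives $\Phi(\mathrm{col}(T,\rho))=\Phi(\mathrm{col}(T^f,\rho))$.

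I expect no real obstacle here. The main thing to check is that removability in the proposition refers to the original word $\mathrm{col}(T,\rho)$. Consequently the later copies used as witnesses may themselves lie in deleted blocks; this is harmless, because the corollary only needs each factor to be removable in the original word.
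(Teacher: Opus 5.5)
Your proposal is correct and is essentially the paper's own proof. It splits the cells of each column above row $g_r$ into maximal runs of consecutive values, each starting at a gap and so satisfying Lemma~\ref{lem:run-copies}; each run is then a removable factor of $\mathrm{col}(T,\rho)$ by Proposition~\ref{prop:deletion-preserves-phi}, and deleting them all via Corollary~\ref{cor:delete-several-runs} leaves exactly $\mathrm{col}(T^f,\rho)$. Your remark that removability is only needed in the original word, even when the witnessing copies lie in deleted blocks, is exactly what that corollary's proof handles, since the suffix starting at each remaining block is unchanged.
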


\begin{proof}
Let $C_1,\ldots,C_k$ be the columns of $(T,\rho)$. Going up each
column $C_r$ from row $1$, split its cells into maximal runs of
consecutive values $B^r_1,B^r_2,\ldots,B^r_{p_r}$, starting a new run
at every row where $C_r$ has a gap (Definition~\ref{def:column-gap}).
For instance, in Example~\ref{ex:full-core}, the column $C_2$ splits
into the runs $\{2\}$, $\{4,5\}$ and $\{7\}$. By
Definition~\ref{def:full-core}, $B^r_1$ consists of the cells of $C_r$
in rows $1,\ldots,g_r$, i.e.\ it is the column $C_r$ of $T^f$. Each
$B^r_j$ with $j\ge2$ starts at a row $i>1$ where $C_r$ has a gap, so
it satisfies the hypotheses of Lemma~\ref{lem:run-copies}, with $a$ and
$b$ its smallest and largest entries. Since a column is read from top
to bottom and the columns from left to right, reading each run as a
decreasing word we get
\[
\mathrm{col}(T,\rho)=B^1_{p_1}\cdots B^1_2B^1_1\ \cdots\
B^k_{p_k}\cdots B^k_2B^k_1,\qquad
\mathrm{col}(T^f,\rho)=B^1_1B^2_1\cdots B^k_1.
\]

So $\mathrm{col}(T^f,\rho)$ is obtained from $\mathrm{col}(T,\rho)$ by
deleting the runs $B^r_j$ with $j\ge2$. By
Proposition~\ref{prop:deletion-preserves-phi}, each of them is a
removable factor of $\mathrm{col}(T,\rho)$, so
Corollary~\ref{cor:delete-several-runs} gives
$\Phi(\mathrm{col}(T,\rho))=\Phi(\mathrm{col}(T^f,\rho))$.
\end{proof}

\section{The image of a full \texorpdfstring{$N$}{N}-tableau}
\label{sec:full-image}

By Theorem~\ref{thm:phi-full-core}, to describe $\Phi$ it suffices to
describe $\Phi(T)$ for full $N$-tableaux $T$. Throughout this section,
every $N$-tableau is equipped with its tight positioning $\rho$
(Corollary~\ref{cor:tight-exists-for-Ntableau}).

\subsection{The leftmost-column rule and heights}
\label{sec:leftmost-rule}

\begin{lemma}[Columns to the right lie above a base]
\label{lem:right-columns-above-base}
Let $(T,\rho)$ be an aligned row array, $T=(R_1,\ldots,R_m)$, whose
first row $R_1$ is strictly increasing, and let $r<r'$. Then every
entry of $C_{r'}$ is strictly larger than $C_r(1)$. In particular, the
letter $C_r(1)-1$ does not occur in $w(C_{r'})$.
\end{lemma}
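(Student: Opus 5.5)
The plan is to reduce everything to row $1$: each entry of $C_{r'}$ dominates the bottom entry $C_{r'}(1)$, and that bottom entry is strictly larger than $C_r(1)$ because both sit in the strictly increasing row $R_1$.

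\emph{Step 1: the bottom entry dominates its column.} By Definition~\ref{def:column-value}, $C_{r'}$ has exactly one member in each row $1,\ldots,|C_{r'}|$. I will show by induction on $i$ that $C_{r'}(i)\ge C_{r'}(1)$. Let $(i,j)\in C_{r'}$ with $i>1$. Then $\psi(i,j)=(i-1,\rho_i(j))$ also lies in $C_{r'}$, so $C_{r'}(i-1)=R_{i-1,\rho_i(j)}$. Condition (P1) gives $R_{i-1,\rho_i(j)}<R_{i,j}$, that is, $C_{r'}(i-1)<C_{r'}(i)$. Hence the values of $C_{r'}$ strictly increase upward, which is also the strict decrease of $w(C_{r'})$ noted in Definition~\ref{def:column-word}. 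Therefore every entry of $C_{r'}$ is $\ge C_{r'}(1)$.

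\emph{Step 2: compare the bottom entries.} By Definition~\ref{def:column}, column $C_r$ contains $(1,r)$ and column $C_{r'}$ contains $(1,r')$. So $C_r(1)=R_{1,r}$ and $C_{r'}(1)=R_{1,r'}$. Since $r<r'$ and $R_1$ is strictly increasing, $R_{1,r}<R_{1,r'}$. Combining with Step~1, every entry of $C_{r'}$ is $\ge C_{r'}(1)>C_r(1)$.

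\emph{Step 3: the ``in particular'' clause.} The letter $C_r(1)-1$ is smaller than $C_r(1)$, so by Step~2 it cannot equal any entry of $C_{r'}$. Hence it does not occur in $w(C_{r'})$.

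There is no real obstacle. The only point needing care is that strict increase of $R_1$ is used only in Step~2; the upper rows need not be strictly increasing, since Step~1 relies on (P1) alone.
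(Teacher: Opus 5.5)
Your proof is correct and follows the same route as the paper's: (P1) makes each column strictly increase upward, so every entry of $C_{r'}$ is at least $C_{r'}(1)=R_{1,r'}$, and strict increase of $R_1$ gives $R_{1,r'}>R_{1,r}=C_r(1)$. The ``in particular'' clause then follows immediately, as you say.
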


\begin{proof}
By (P1), each column strictly increases going up, so every entry of
$C_{r'}$ is at least $C_{r'}(1)=R_{1,r'}$. Since $R_1$ is strictly increasing
and $r<r'$, $R_{1,r'}>R_{1,r}=C_r(1)$.
\end{proof}

\begin{theorem}[$\Phi$ of a full $N$-tableau]
\label{thm:phi-full}
Let $(T,\rho)$ be a full $N$-tableau, let $\ell:=|R_1|$, with columns
$C_1,\ldots,C_\ell$, and write $\alpha:=\Phi(\mathrm{col}(T,\rho))$.
Let $y\in\{0,\ldots,n\}$.
\begin{enumerate}
\item[(a)] If $y$ does not occur in $T$ (in particular if $y=0$), then
$y\alpha=y$.
\item[(b)] Otherwise, let $C_r$ be the leftmost column in which $y$
occurs. Then $y\alpha=C_r(1)-1$.
\end{enumerate}
\end{theorem}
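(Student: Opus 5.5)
Since $T$ is full, each column $C_r$ has consecutive entries $C_r(1),C_r(1)+1,\ldots,C_r(1)+|C_r|-1$. Its word is therefore $w(C_r)=v_{b_r,a_r}$ with $a_r:=C_r(1)$ and $b_r:=a_r+|C_r|-1$, and
\[
\alpha=\epsilon_{b_1,a_1}\,\epsilon_{b_2,a_2}\cdots\epsilon_{b_\ell,a_\ell}.
\]
By Lemma~\ref{lem:run-collapses-interval}, each factor $\epsilon_{b_r,a_r}$ sends every element of the interval $I_r:=\{a_r,\ldots,b_r\}$ to $a_r-1$ and fixes everything else. The plan is to track a single $y$ through these factors from left to right.

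Part (a) is immediate. If $y$ occurs in no column, then $y\notin I_r$ for every $r$, so every factor fixes $y$. (For $y=0$ this holds because all entries are at least $1$.)

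For part (b), let $r$ be the leftmost column containing $y$. For $s<r$ we have $y\notin I_s$, so $y$ is fixed by $\epsilon_{b_1,a_1}\cdots\epsilon_{b_{r-1},a_{r-1}}$. The factor $\epsilon_{b_r,a_r}$ then sends $y$ to $a_r-1=C_r(1)-1$. It remains to show that every later factor fixes $a_r-1$, i.e.\ that $a_r-1\notin I_{s}$ for $s>r$. This is exactly Lemma~\ref{lem:right-columns-above-base}: every entry of $C_{s}$ with $s>r$ is strictly larger than $C_r(1)$, so the letter $C_r(1)-1$ does not occur in $w(C_s)$, and $\epsilon_{b_s,a_s}$ fixes it. Hence $y\alpha=C_r(1)-1$.

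The main point to check is that fullness really gives $w(C_r)=v_{b_r,a_r}$ as a single decreasing run of consecutive letters, so that Lemma~\ref{lem:run-collapses-interval} applies column by column. This is just Definition~\ref{def:full-column} combined with the top-to-bottom reading order of Definition~\ref{def:column-word}. Beyond this, the argument is a direct composition, and I expect no real obstacle. Note that the leftmost-column choice is essential, since $y$ may occur in several columns. Once $y$ has dropped to $C_r(1)-1$, which lies below the base of every later column, it can no longer be moved.
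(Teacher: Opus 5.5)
Your proposal is correct and follows the paper's proof essentially step for step. Like the paper, you factor $\alpha$ column by column and use Lemma~\ref{lem:run-collapses-interval} on the full column $C_r$, then Lemma~\ref{lem:right-columns-above-base} to see that $C_r(1)-1$ is fixed by every later column.
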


\begin{proof}
For $s=1,\ldots,\ell$ write $\alpha_s:=\Phi(w(C_s))$. Since
$\mathrm{col}(T,\rho)=w(C_1)\cdots w(C_\ell)$
(Definition~\ref{def:column-word}) and $\Phi$ is a homomorphism,
$\alpha=\alpha_1\cdots\alpha_\ell$. By
Definition~\ref{def:delta-map}, if $z$ is not a letter of a word $u$,
then $z\Phi(u)=z$.

(a) $y$ is not a letter of any $w(C_s)$, so $y\alpha_s=y$ for every
$s$, and $y\alpha=y$.

(b) Write $a:=C_r(1)$. For $s<r$, $y$ is not a letter of $w(C_s)$, so
$y\alpha_1\cdots\alpha_{r-1}=y$. Since $C_r$ is full
(Definition~\ref{def:full-column}), $w(C_r)=v_{c,a}$, where
$c$ is the top entry of $C_r$, and $a\le y\le c$; so
$y\alpha_r=y\epsilon_{c,a}=a-1$ by
Lemma~\ref{lem:run-collapses-interval}. For $s>r$, $a-1$ is not a
letter of $w(C_s)$ (Lemma~\ref{lem:right-columns-above-base}), so
$(a-1)\alpha_{r+1}\cdots\alpha_\ell=a-1$. Hence $y\alpha=a-1$.
\end{proof}

For $y\in\{0,\ldots,n\}$, let $h(y)$ be the number of rows of $T$
containing $y$. By containment, these are exactly the rows
$1,\ldots,h(y)$.

\begin{corollary}[$\Phi$ of a full $N$-tableau, by heights]
\label{cor:phi-full-height}
Let $(T,\rho)$ be a full $N$-tableau and $\alpha:=\Phi(\mathrm{col}(T,\rho))$.
Then $y\alpha=y-h(y)$ for every $y\in\{0,\ldots,n\}$.
\end{corollary}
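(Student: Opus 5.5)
We will derive this from Theorem~\ref{thm:phi-full}. If $y$ does not occur in $T$ (including $y=0$), then $h(y)=0$ and part (a) gives $y\alpha=y=y-h(y)$. So assume $y$ occurs in $T$. Let $C_r$ be the leftmost column containing $y$. By part (b) it suffices to show that $C_r(1)=y-h(y)+1$, i.e.\ that the occurrence of $y$ in $C_r$ lies in row $h(y)$ and that $C_r$ is full up to that row.

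The key step is locating the leftmost occurrence. By containment, $y$ occurs exactly in rows $1,\ldots,h(y)$, once in each row, since rows are strictly increasing. By Corollary~\ref{cor:higher-left}, among these occurrences, the one in a higher row lies in a column further left. Hence the leftmost column containing $y$ is the column of its occurrence in the top row $h(y)$, that is, $C_r(h(y))=y$.

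Now use fullness. Since $T$ is full, $C_r$ is a full column (Definition~\ref{def:full-column}), so $C_r(i)=C_r(1)+(i-1)$ for all $i\le|C_r|$. Taking $i=h(y)$ gives $y=C_r(1)+h(y)-1$, i.e.\ $C_r(1)-1=y-h(y)$. Combined with Theorem~\ref{thm:phi-full}(b), this gives $y\alpha=y-h(y)$.

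The only delicate point is the identification of the leftmost column with the topmost occurrence. This relies on Corollary~\ref{cor:higher-left}, which requires $T$ to be an $N$-tableau; that hypothesis holds here. The rest is direct substitution.
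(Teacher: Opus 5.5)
Your proposal is correct and matches the paper's own proof. Both use Theorem~\ref{thm:phi-full}(a) when $h(y)=0$, then Corollary~\ref{cor:higher-left} to identify the leftmost column containing $y$ as the one through its occurrence in row $h(y)$, and then fullness of that column to get its base $y-h(y)+1$, so that Theorem~\ref{thm:phi-full}(b) gives $y\alpha=y-h(y)$.
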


\begin{proof}
If $h(y)=0$, this is Theorem~\ref{thm:phi-full}(a). Otherwise, by
Corollary~\ref{cor:higher-left}, the leftmost column containing $y$ is the
one through its occurrence in row $h(y)$, whose base is $y-h(y)+1$
since the column is full; so $y\alpha=y-h(y)$ by
Theorem~\ref{thm:phi-full}(b).
\end{proof}

\begin{example}
\label{ex:phi-full}
Let $n=7$ and $T=(R_1,R_2)$ with $R_1=(1,2,3,4,5,7)$ and
$R_2=(2,3,5)$. Its tight positioning is drawn in
Figure~\ref{fig:phi-full}; all its columns are full.
\begin{figure}[H]
\centering
\begin{tikzpicture}[scale=0.8, every node/.style={font=\small}]
\foreach \i/\v in {1/1,2/2,3/3,4/4,5/5,6/7} {
  \draw (\i-1,0) rectangle (\i,1);
  \node at (\i-0.5,0.5) {$\v$};
  \node at (\i-0.5,-0.4) {$C_{\i}$};
}
\foreach \i/\v in {1/2,2/3,4/5} {
  \draw (\i-1,1) rectangle (\i,2);
  \node at (\i-0.5,1.5) {$\v$};
}
\node[anchor=east] at (-0.3,0.5) {$R_1$};
\node[anchor=east] at (-0.3,1.5) {$R_2$};
\end{tikzpicture}
\caption{The full $N$-tableau of Example~\ref{ex:phi-full}, in its
tight positioning.}
\label{fig:phi-full}
\end{figure}
\noindent The letters $0$ and $6$ do not occur, so they are fixed.
Both $1$ and $2$ occur first in $C_1$, with base $1$, so they go to
$0$; $3$ occurs first in $C_2$, with base $2$, so it goes to $1$; both
$4$ and $5$ occur first in $C_4$, with base $4$, so they go to $3$; and
$7$ occurs only in $C_6$, so it goes to $6$. Thus
\[
\alpha=\begin{pmatrix}
0&1&2&3&4&5&6&7\\
0&0&0&1&3&3&6&6
\end{pmatrix}.
\]
The same $\alpha$ comes from Corollary~\ref{cor:phi-full-height}:
$h(1)=1$ and $h(2)=2$ give $1,2\mapsto0$; $h(3)=2$ gives $3\mapsto1$;
$h(4)=1$ and $h(5)=2$ give $4,5\mapsto3$; $h(7)=1$ gives $7\mapsto6$;
and the letters with $h(y)=0$ are fixed.
\end{example}

\subsection{The augmented \texorpdfstring{$N$}{N}-tableau}
\label{sec:augmented}

Theorem~\ref{thm:phi-full} treats the letters that do not occur in $T$
separately. Adding one more row at the bottom absorbs them into the
rule of part (b), so that $\Phi$ is read off from the columns of a
single picture; this is the form used in the rest of the section.

\begin{definition}[Augmented $N$-tableau]
\label{def:augmented}
Let $T=(R_1,\ldots,R_m)$ be an $N$-tableau over $\mathbf{n}$, and let
$R_0:=(0,1,\ldots,n)$. The \emph{augmented $N$-tableau} of $T$ is the
row array $T^+:=(R_0,R_1,\ldots,R_m)$ over $\{0,1,\ldots,n\}$, with
$R_0$ as its bottom row. Since $R_1\subseteq R_0$ and $\min R_0=0<\min
R_1$, it is an $N$-tableau over $\{0,1,\ldots,n\}$, so it has a tight
positioning (Corollary~\ref{cor:tight-exists-for-Ntableau}). The
bottom row $R_0$ has exactly one cell with each value $s\in\{0,\ldots,n\}$;
we write $C^+_s$ for the column of $T^+$ whose base is that cell, so
$C^+_s(0)=s$, and $C^+_0,C^+_1,\ldots,C^+_n$ are the columns of $T^+$
from left to right.
\end{definition}

The augmented $N$-tableau is only a picture: it contains the letter
$0$, so it does not represent an element of $\mathrm{Styl}_n$, and we
never apply $\Phi$ to it. If $T$ is full, the tight positioning of
$T^+$ places each $a\in R_1$ directly above $a-1\in R_0$ and leaves the
rows above unchanged. So each column $C_r$ of $T$ just extends one
cell down, to the base $C_r(1)-1$, and every other column of $T^+$ is a
single cell of $R_0$. In particular, $T^+$ is again full.

\begin{corollary}[$\Phi$ of a full $N$-tableau, uniform rule]
\label{cor:phi-full-augmented}
Let $(T,\rho)$ be a full $N$-tableau, $\alpha:=\Phi(\mathrm{col}(T,\rho))$,
and $T^+$ its augmented $N$-tableau. For every $y\in\{0,\ldots,n\}$,
$y\alpha$ is the base of the leftmost column of $T^+$ in which $y$
occurs.
\end{corollary}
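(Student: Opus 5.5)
The plan is to reduce the statement to Theorem~\ref{thm:phi-full}, using the description of $T^+$ given just before the corollary. Since $T$ is full, the tight positioning of $T^+$ puts each entry $a\in R_1$ directly above the cell $a-1$ of $R_0$, and leaves the positioning of rows $2,\ldots,m$ unchanged. To see this, note that by Proposition~\ref{prop:tight-exists-for-Ntableau}, with $R_0=(0,\ldots,n)$, we have $\tau_1(j)=R_{1,j}+1$ as an index into $R_0$, so $\rho^{\mathrm{tight}}_1(j)=R_{1,j}$, which is the position of the value $R_{1,j}-1$. The tight positioning of the higher rows depends only on adjacent pairs of rows, so it is unchanged. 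Hence each column $C_r$ of $T$ becomes a column of $T^+$ with base $C_r(1)-1$, and the other cells of $R_0$ form single-cell columns. Their left-to-right order is the order of the bases in $R_0$, which agrees with the order of the $C_r$ because $R_1$ is strictly increasing.

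Next I fix $y\in\{0,\ldots,n\}$ and split into two cases as in Theorem~\ref{thm:phi-full}. If $y$ does not occur in $T$, then its only occurrence in $T^+$ is the cell $y$ of $R_0$. This cell is not below any cell of $R_1$, because the cells of $R_0$ lying under $R_1$ have values $a-1$ with $a\in R_1$, and $y=a-1$ would need $y+1\in R_1$. Even if that happens, the column through $R_0$'s cell $y$ is then the extension of a $T$-column whose entries are at least $y+1$, so $y$ occurs in that column only at its base. So in either situation the leftmost column of $T^+$ containing $y$ has base $y$, which equals $y\alpha$ by Theorem~\ref{thm:phi-full}(a).

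If $y$ does occur in $T$, let $C_r$ be the leftmost column of $T$ containing $y$. Its extension in $T^+$ has base $C_r(1)-1$, which is $y\alpha$ by Theorem~\ref{thm:phi-full}(b). It remains to show that no column of $T^+$ to its left contains $y$. Any column to the left is either the extension of some $C_s$ with $s<r$, which does not contain $y$ by the choice of $r$, or a single cell of $R_0$ whose value is less than $C_r(1)-1<y$. The only occurrence of $y$ outside the extensions of $T$-columns is the cell $y$ of $R_0$. This cell sits at index $y$, to the right of base $C_r(1)-1$ since $y\ge C_r(1)$, so it does not come earlier.

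The main obstacle is the careful bookkeeping of column order in $T^+$ versus $T$. This should follow from Lemma~\ref{lem:column-order} applied to row $0$, since columns of $T^+$ are indexed by their base in $R_0$, whose values increase from left to right.
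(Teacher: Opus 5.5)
Your proposal is correct and is essentially the paper's argument: you observe that $T^+$'s columns are $T$'s columns extended one cell down to base $C_r(1)-1$, plus single cells of $R_0$, ordered by base, and you read both cases off Theorem~\ref{thm:phi-full}, checking the tight positioning and the leftmost-column claim in more detail than the paper's one-line proof. One small slip: the sentence saying the cell $y$ of $R_0$ is not below any cell of $R_1$ is false when $y+1\in R_1$; your ``even if'' clause repairs this, and it is simpler to note that the only occurrence of $y$ is the base of $C^+_y$ itself.
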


\begin{proof}
Immediate from Theorem~\ref{thm:phi-full}: if $y$ does not occur in
$T$, the only column of $T^+$ containing $y$ is the single cell $y$ of
$R_0$; otherwise the leftmost one is the extension of the leftmost
column $C_r$ of $T$ containing $y$, with base $C_r(1)-1$.
\end{proof}

In terms of $T^+$, Corollary~\ref{cor:phi-full-height} reads: $y$ occurs
in rows $0,1,\ldots,h(y)$, and the base of the leftmost column
containing it is $y-h(y)$.

\begin{example}
\label{ex:phi-full-augmented}
The augmented $N$-tableau of the full $N$-tableau of
Example~\ref{ex:phi-full} is drawn in Figure~\ref{fig:augmented}. Reading
off the base of the leftmost column containing each $y$ recovers
$\alpha$: $0,1,2\mapsto0$, $3\mapsto1$, $4,5\mapsto3$, $6,7\mapsto6$.
\begin{figure}[H]
\centering
\begin{tikzpicture}[scale=0.8, every node/.style={font=\small}]
\foreach \i in {0,...,7} {
  \draw (\i,0) rectangle (\i+1,1);
  \node at (\i+0.5,0.5) {$\i$};
}
\foreach \i/\v in {0/1,1/2,2/3,3/4,4/5,6/7} {
  \draw (\i,1) rectangle (\i+1,2);
  \node at (\i+0.5,1.5) {$\v$};
}
\foreach \i/\v in {0/2,1/3,3/5} {
  \draw (\i,2) rectangle (\i+1,3);
  \node at (\i+0.5,2.5) {$\v$};
}
\node[anchor=east] at (-0.3,0.5) {$R_0$};
\node[anchor=east] at (-0.3,1.5) {$R_1$};
\node[anchor=east] at (-0.3,2.5) {$R_2$};
\end{tikzpicture}
\caption{The augmented $N$-tableau of Example~\ref{ex:phi-full-augmented}.}
\label{fig:augmented}
\end{figure}
\end{example}

\subsection{Full \texorpdfstring{$N$}{N}-tableaux represent \texorpdfstring{$\mathrm{Cat}_n$}{Cat\_n}}
\label{sec:full-represent}

\begin{theorem}[$\Phi$ is injective on full $N$-tableaux]
\label{thm:full-injective}
Let $(T_1,\rho)$ and $(T_2,\rho)$ be distinct full $N$-tableaux. Then
$\Phi(\mathrm{col}(T_1,\rho))\ne\Phi(\mathrm{col}(T_2,\rho))$.
\end{theorem}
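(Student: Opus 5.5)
We prove the contrapositive: a full $N$-tableau $T$ can be reconstructed from $\alpha:=\Phi(\mathrm{col}(T,\rho))$. The natural tool is Corollary~\ref{cor:phi-full-height}, which gives $y\alpha=y-h(y)$ for every $y\in\{0,\ldots,n\}$, where $h(y)$ is the number of rows of $T$ containing $y$. So $h(y)=y-y\alpha$ is determined by $\alpha$.

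By containment ($R_i\subseteq R_{i-1}$), the rows containing $y$ are exactly $R_1,\ldots,R_{h(y)}$, as recorded just before Corollary~\ref{cor:phi-full-height}. Hence each row is recovered as
\[
R_i=\{\,y\in\mathbf{n} : h(y)\ge i\,\}, \qquad i\ge1,
\]
listed in increasing order. The number of rows $m$ is $\max_y h(y)$, since every row of an $N$-tableau is nonempty: a letter of $R_m$ has $h=m$, and no letter has $h>m$. Thus the whole row array $(R_1,\ldots,R_m)$ is a function of $\alpha$.

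Now suppose $\Phi(\mathrm{col}(T_1,\rho))=\Phi(\mathrm{col}(T_2,\rho))$. Then the height functions $h_{T_1}$ and $h_{T_2}$ coincide, so the recovered rows coincide and $T_1=T_2$ as row arrays. The tight positioning is determined by the rows, so the aligned arrays coincide too, contradicting distinctness.

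There is no real obstacle here. The only points needing care are that fullness is exactly the hypothesis under which Corollary~\ref{cor:phi-full-height} applies, and that nonempty rows make $m$ recoverable. For the same reason, $\Phi$ is injective on full tableaux but not on all of $\mathrm{Styl}_n$.
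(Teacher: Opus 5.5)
Your argument is correct, and it takes a different and shorter route than the paper. The paper does not use Corollary~\ref{cor:phi-full-height} here at all. Instead it:
\begin{itemize}
\item passes to the augmented tableaux $T_1^+,T_2^+$ and observes that a full column is determined by its base and height;
\item takes the leftmost base $r$ at which the column heights differ, and looks at the letter $a=r+i$ sitting in the taller column one row above the top of the shorter one;
\item runs a two-case analysis: Corollary~\ref{cor:phi-full-augmented} gives $a\alpha_1=r<a\alpha_2$, while Corollary~\ref{cor:higher-left}, containment and Lemma~\ref{lem:successor-above} rule out the case where $a$ already occurs further left.
\end{itemize}

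You parametrize by the heights of letters rather than the heights of columns. Nesting of rows makes $T\mapsto h_T$ injective on all $N$-tableaux. Fullness then enters only once, through Corollary~\ref{cor:phi-full-height}, to identify $h_T(y)$ with $y-y\alpha$. That corollary precedes the theorem in the paper, so nothing is circular.

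Your route buys an explicit inverse on the image, $R_i=\{y : y-y\alpha\ge i\}$. Combined with Theorem~\ref{thm:phi-full-core}, this writes down the full core of any $N$-tableau $T$ directly from $\Phi(T)$. The paper's route instead exhibits an explicit separating letter and stays inside the augmented-column picture that the following subsections rely on.

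One small caveat, outside the proof itself: your closing remark that $\Phi$ fails to be injective on all of $\mathrm{Styl}_n$ is true only for $n\ge3$. For $n=1,2$, both $\mathrm{Styl}_n$ and $\mathrm{Cat}_n$ have $2$ and $5$ elements respectively, so $\Phi$ is a bijection there.
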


\begin{proof}
Write $\alpha_1,\alpha_2$ for the two images, and work in the
augmented $N$-tableaux $T_1^+,T_2^+$. These are full $N$-tableaux over
$\{0,\ldots,n\}$ (see the discussion after
Definition~\ref{def:augmented}), and the results above on $N$-tableaux
apply to them verbatim, the alphabet playing no role. A full column
with base $s$ has entries $s,s+1,\ldots$ in rows $0,1,\ldots$, so it is
determined by its height. For $s\in\{0,\ldots,n\}$ write $C^{(1)}_s$
and $C^{(2)}_s$ for the column $C^+_s$ of $T_1^+$ and of $T_2^+$,
respectively. As $T_1\ne T_2$, there is an $s$ with
$C^{(1)}_s\ne C^{(2)}_s$, i.e.\ with different heights. Let $r$ be the
least such $s$; without loss of generality $C^{(1)}_r$ is taller than
$C^{(2)}_r$. Let $i-1$ be the top row of $C^{(2)}_r$ ($i\ge1$, as both
contain row $0$), and let $a:=r+i=C^{(1)}_r(i)$. By the choice of $r$,
$C^{(1)}_s=C^{(2)}_s$ for all $s<r$.

\emph{Case 1: no $C^{(1)}_s$ with $s<r$ contains $a$.} Then
$C^{(1)}_r$ is the leftmost column of $T_1^+$ containing $a$, so
$a\alpha_1=r$ (Corollary~\ref{cor:phi-full-augmented}). No
$C^{(2)}_s=C^{(1)}_s$ with $s<r$ contains $a$ either, and $C^{(2)}_r$
has entries $r,\ldots,a-1$ only. So the leftmost column of $T_2^+$
containing $a$ has base larger than $r$, and $a\alpha_2>r=a\alpha_1$.

\emph{Case 2: some $C^{(1)}_\ell$ with $\ell<r$ contains $a$}, say in
row $t$. Since $C^{(1)}_\ell$ is to the left of $C^{(1)}_r$, which
contains $a$ in row $i$, Corollary~\ref{cor:higher-left} gives $t>i$. As
$C^{(2)}_\ell=C^{(1)}_\ell$, $a$ occurs in row $t$ of $T_2^+$, hence,
by containment, also in row $i$. But row $i-1$ of $T_2^+$ contains
$a-1=C^{(2)}_r(i-1)$, so by Lemma~\ref{lem:successor-above} the entry
$a$ of row $i$ sits directly above it, in $C^{(2)}_r$. This contradicts
$C^{(2)}_r$ ending at row $i-1$, so Case 2 does not occur.
\end{proof}

\begin{corollary}[Full $N$-tableaux represent $\mathrm{Cat}_n$]
\label{cor:full-bijection}
The map $T\mapsto\Phi(\mathrm{col}(T,\rho))$ is a bijection from the
set of full $N$-tableaux over $\mathbf{n}$ onto $\mathrm{Cat}_n$.
Moreover, for every $N$-tableau $T$, its full core $T^f$ is the unique
full $N$-tableau with the same image under $\Phi$.
\end{corollary}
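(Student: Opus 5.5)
Injectivity of the map $T\mapsto\Phi(\mathrm{col}(T,\rho))$ on full $N$-tableaux is Theorem~\ref{thm:full-injective}. What remains is surjectivity onto $\mathrm{Cat}_n$, plus the statement about $T^f$.

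For surjectivity I would use the surjectivity of $\Phi$ (Theorem~\ref{thm:volkov-homomorphism}) rather than building a preimage by hand. Let $\alpha\in\mathrm{Cat}_n$.
\begin{itemize}
\item Since $\Phi$ is surjective, there is an element of $\mathrm{Styl}_n$ with $\Phi$-image $\alpha$. By the bijection between $\mathrm{Styl}_n$ and $N$-tableaux, this element is represented by an $N$-tableau $T$.
\item By the discussion in Section~\ref{sec:preliminaries}, $\mathrm{col}(T,\rho^{\mathrm{tr}})$ represents $T$ in $\mathrm{Styl}_n$. By Corollary~\ref{cor:col-well-defined}, $\mathrm{col}(T,\rho^{\mathrm{tr}})\equiv_{\mathrm{plac}}\mathrm{col}(T,\rho^{\mathrm{tight}})$. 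Since $\equiv_{\mathrm{plac}}\subseteq\equiv_{\mathrm{styl}}$, we get $\Phi(\mathrm{col}(T,\rho))=\alpha$ for the tight positioning $\rho$.
\item By Theorem~\ref{thm:phi-full-core}, $\Phi(\mathrm{col}(T^f,\rho))=\alpha$.
\item By Corollary~\ref{cor:full-core-is-Ntableau}, $T^f$ is an $N$-tableau. Its positioning is tight (Remark~\ref{rem:full-core-tight}), and every column is full by construction, so $T^f$ is a full $N$-tableau.
\end{itemize}
Hence $\alpha$ lies in the image, and the map is surjective.

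The "moreover" part follows from the same chain. For any $N$-tableau $T$, the full core $T^f$ is a full $N$-tableau with the same image as $T$. By injectivity it is the only full $N$-tableau with that image.

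The one delicate point is the bookkeeping behind $T^f$ being full. The tight positioning of $T^f$ is the restricted one (Lemma~\ref{lem:tight-restricts}), so its columns agree with the columns of $T$ cut at $g_r$ (Lemma~\ref{lem:restriction-keeps-columns}). Each such column is gap-free through row $g_r$, hence full. Dropping the empty top rows (Remark~\ref{rem:full-core-empty-rows}) changes no column.
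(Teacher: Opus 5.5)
Your proposal is correct and follows the paper's proof essentially step for step. You get injectivity from Theorem~\ref{thm:full-injective}, and surjectivity by lifting through Volkov's surjection, computing $\Phi$ from the tight column word via Corollary~\ref{cor:col-well-defined}, and passing to $T^f$ via Theorem~\ref{thm:phi-full-core}, with uniqueness again from injectivity. Your extra check that $T^f$ is full simply spells out what the paper cites as Corollary~\ref{cor:full-core-is-Ntableau} and Remark~\ref{rem:full-core-tight}.
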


\begin{proof}
Injectivity is Theorem~\ref{thm:full-injective}. For surjectivity, let
$\beta\in\mathrm{Cat}_n$. By Theorem~\ref{thm:volkov-homomorphism},
$\beta=\Phi(T)$ for some $N$-tableau $T$, and $\Phi(T)$ can be computed
from $\mathrm{col}(T,\rho)$ (Corollary~\ref{cor:col-well-defined}). By
Theorem~\ref{thm:phi-full-core},
$\Phi(\mathrm{col}(T,\rho))=\Phi(\mathrm{col}(T^f,\rho))$, and $T^f$ is
a full $N$-tableau (Corollary~\ref{cor:full-core-is-Ntableau} and
Remark~\ref{rem:full-core-tight}). The same argument, applied to any
$N$-tableau $T$, gives the second statement, with uniqueness again by
Theorem~\ref{thm:full-injective}.
\end{proof}

\begin{corollary}[The fibres of $\Phi$]
\label{cor:phi-fibres}
Let $T_1,T_2$ be $N$-tableaux over $\mathbf{n}$. Then
$\Phi(T_1)=\Phi(T_2)$ if and only if $T_1^f=T_2^f$.
\end{corollary}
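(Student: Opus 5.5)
The plan is to reduce both directions to the bijection of Corollary~\ref{cor:full-bijection}, after first making explicit that $\Phi(T)$, for an $N$-tableau $T$, means $\Phi$ applied to the class of $T$ in $\mathrm{Styl}_n$. Any word representing that class may be used, and in particular the column word $\mathrm{col}(T,\rho)$ of the tight positioning. Indeed, by Corollary~\ref{cor:col-well-defined}, $\mathrm{col}(T,\rho)\equiv_{\mathrm{plac}}\mathrm{col}(T,\rho^{\mathrm{tr}})$. Since $\equiv_{\mathrm{plac}}\subseteq\equiv_{\mathrm{styl}}$ and $N(\mathrm{col}(T))=T$, this word represents $T$. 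Therefore $\Phi(T)=\Phi(\mathrm{col}(T,\rho))$ for every $N$-tableau $T$.

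Next I combine this with Theorem~\ref{thm:phi-full-core}. It gives
\[
\Phi(T)=\Phi(\mathrm{col}(T,\rho))=\Phi(\mathrm{col}(T^f,\rho))=\Phi(T^f).
\]
The last equality uses that $T^f$ is an $N$-tableau (Corollary~\ref{cor:full-core-is-Ntableau}) and that the positioning it inherits is its own tight positioning (Remark~\ref{rem:full-core-tight}). The previous paragraph then applies to $T^f$ as well.

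For the ``if'' direction, suppose $T_1^f=T_2^f$. Applying the chain above to $T_1$ and to $T_2$ gives
\[
\Phi(T_1)=\Phi(T_1^f)=\Phi(T_2^f)=\Phi(T_2).
\]

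For the ``only if'' direction, suppose $\Phi(T_1)=\Phi(T_2)$. The chain gives $\Phi(\mathrm{col}(T_1^f,\rho))=\Phi(\mathrm{col}(T_2^f,\rho))$. Both $T_1^f$ and $T_2^f$ are full $N$-tableaux, since they are $N$-tableaux equal to their own full cores. Hence Theorem~\ref{thm:full-injective}, equivalently the uniqueness clause of Corollary~\ref{cor:full-bijection}, forces $T_1^f=T_2^f$.

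The only step needing real care is the bookkeeping in the first paragraph: making sure the tight column word represents the correct stylic class, so that $\Phi(T)$ is well defined and can be computed from the tight alignment. Everything else is a direct citation of the earlier results.
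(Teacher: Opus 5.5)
Your proof is correct and follows the paper's argument. Both directions rest on $\Phi(T)=\Phi(T^f)$ (Theorem~\ref{thm:phi-full-core}, which the paper cites via Corollary~\ref{cor:full-bijection}) and on injectivity of $\Phi$ on full $N$-tableaux (Theorem~\ref{thm:full-injective}); your opening check that the tight column word represents the stylic class of $T$ just makes explicit what the paper leaves implicit.
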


\begin{proof}
By Corollary~\ref{cor:full-bijection}, $\Phi(T_i)=\Phi(T_i^f)$ for
$i=1,2$. If $T_1^f=T_2^f$, then
$\Phi(T_1)=\Phi(T_1^f)=\Phi(T_2^f)=\Phi(T_2)$. Conversely, if
$\Phi(T_1)=\Phi(T_2)$, then $T_1^f$ and $T_2^f$ are full $N$-tableaux
with the same image under $\Phi$, so $T_1^f=T_2^f$ by
Theorem~\ref{thm:full-injective}.
\end{proof}

\subsection{Image and kernel}
\label{sec:image-kernel}

An element of $\mathrm{Cat}_n$ is conveniently described by two sets.
Let $\alpha\in\mathrm{Cat}_n$. Since $\alpha$ is order-preserving, each
of its kernel classes $\{y : y\alpha=z\}$ is an interval of
$\{0,\ldots,n\}$. Let $X(\alpha)$ be the set of maxima of the kernel
classes, and $Y(\alpha):=\{y\alpha : 0\le y\le n\}$ the image of
$\alpha$. Both have one element per kernel class, so they have the same
size $k$; moreover $n\in X(\alpha)$, and $0\in Y(\alpha)$ since
$0\alpha\le0$. Since the kernel classes are intervals, a letter $x<n$
is the maximum of its class iff $x+1$ lies in a different class, so
\[
X(\alpha)=\{n\}\cup\{x<n : x\alpha\ne(x+1)\alpha\}.
\]
Writing $X(\alpha)=\{x_1<\cdots<x_k\}$,
$Y(\alpha)=\{z_1<\cdots<z_k\}$ and $x_0:=-1$, the kernel classes are
the intervals $\{x_{j-1}+1,\ldots,x_j\}$, and, $\alpha$ being
order-preserving, the $j$-th one is sent to $z_j$. So $\alpha$ is
determined by the pair $(X(\alpha),Y(\alpha))$.

\begin{remark}
\label{rem:image-kernel-literature}
This description goes back to Solomon
\cite[proof of Proposition~3.4]{Solomon}, who uses the same convention
and the same pair of sets. See also
\cite[Proposition~17.20]{SteinbergBook} and
\cite[Section~2]{MargolisSteinberg}, which work with order-increasing
maps; translated to order-decreasing maps by reversing the order, they
record the \emph{minima} of the kernel classes instead of the maxima.
\end{remark}

We now read $X(\alpha)$ and $Y(\alpha)$ off the augmented $N$-tableau.
For a full $N$-tableau $T$, write $c_s$ for the top entry of the
column $C^+_s$ of $T^+$ (Definition~\ref{def:augmented}). Since $T^+$
is full, a column containing the value $a$ in row $i$ has base $a-i$.

\begin{lemma}[Leftmost occurrences of tops]
\label{lem:top-leftmost}
Let $T$ be a full $N$-tableau, and let $y$ be the top of a column
$C^+_{r_1}$ of $T^+$. Then $y$ is also the top of every column
$C^+_{r_2}$ with $r_2<r_1$ that contains $y$. In particular, the
leftmost column containing $y$ has top $y$.
\end{lemma}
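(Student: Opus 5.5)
The plan is a contradiction argument built on Lemma~\ref{lem:successor-above} and Corollary~\ref{cor:higher-left}, applied inside $T^+$. That is legitimate because $T^+$ is a full $N$-tableau over $\{0,\ldots,n\}$ with its tight positioning, as noted after Definition~\ref{def:augmented}.

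Since $T^+$ is full, a column containing $y$ in row $i$ has base $y-i$. Suppose $y$ is the top of $C^+_{r_1}$, sitting in row $i_1$, so $r_1=y-i_1$. Now take $r_2<r_1$ with $C^+_{r_2}$ containing $y$, say in row $i_2$. By Corollary~\ref{cor:higher-left} we get $i_2>i_1$.

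Assume, for contradiction, that $y$ is not the top of $C^+_{r_2}$. Then $C^+_{r_2}$ has an entry in row $i_2+1$, and by fullness it is $y+1$. Hence $y+1\in R_{i_2+1}$. By containment of rows in an $N$-tableau, $y+1\in R_{i_1+1}$, because $i_1+1\le i_2+1$.

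Row $i_1$ contains $y$ at the cell $(i_1,q)$ of $C^+_{r_1}$. Lemma~\ref{lem:successor-above}, applied to $R_{i_1,q}+1=y+1\in R_{i_1+1}$, places that occurrence of $y+1$ directly above $(i_1,q)$, in the same column $C^+_{r_1}$. So $C^+_{r_1}$ reaches row $i_1+1$, which contradicts $y$ being its top.

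Therefore $y$ is the top of $C^+_{r_2}$. The "in particular" follows by taking $r_2$ to be the leftmost column containing $y$; if that column is $C^+_{r_1}$ itself, the claim is trivial.

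The only delicate point is the index bookkeeping. One must check that rows here are indexed from $0$, that containment $R_i\subseteq R_{i-1}$ in $T^+$ extends down to $R_0$, and that Lemma~\ref{lem:successor-above} applies with $k=i_1$, including $k=0$. Each of these holds because $T^+$ is an $N$-tableau under the shifted indexing.
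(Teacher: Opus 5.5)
Your proposal is correct and follows essentially the same argument as the paper: you use Corollary~\ref{cor:higher-left} to get $i_2>i_1$, fullness to put $y+1$ in row $i_2+1$, containment to move it down to row $i_1+1$, and Lemma~\ref{lem:successor-above} to place it on top of $C^+_{r_1}$, which gives the contradiction. Your extra checks on the shifted row indexing in $T^+$ (including the case $k=0$) are sound, and the paper leaves them implicit.
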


\begin{proof}
Let $y$ occur in row $i_1$ of $C^+_{r_1}$ and in row $i_2$ of
$C^+_{r_2}$. By Corollary~\ref{cor:higher-left}, $i_2>i_1$. Suppose $y$ is
not the top of $C^+_{r_2}$; as $C^+_{r_2}$ is full, it has $y+1$ in row
$i_2+1$. By containment, $y+1$ also occurs in row $i_1+1$, and by
Lemma~\ref{lem:successor-above} it sits directly above the entry $y$
of row $i_1$, i.e.\ in $C^+_{r_1}$. This contradicts $y$ being the top
of $C^+_{r_1}$.
\end{proof}

\begin{proposition}[Reading off image and kernel]
\label{prop:image-kernel}
Let $(T,\rho)$ be a full $N$-tableau and $\alpha:=\Phi(\mathrm{col}(T,\rho))$.
Then $X(\alpha)=\{c_0,\ldots,c_n\}$.
\end{proposition}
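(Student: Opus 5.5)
We show both inclusions, using the characterization $X(\alpha)=\{n\}\cup\{x<n : x\alpha\ne(x+1)\alpha\}$ together with the uniform rule of Corollary~\ref{cor:phi-full-augmented}: $y\alpha$ is the base of the leftmost column of $T^+$ containing $y$. Since $T^+$ is full, a column containing $a$ in row $i$ has base $a-i$.

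\emph{Tops lie in $X(\alpha)$.} Let $y=c_{r_1}$ be the top of $C^+_{r_1}$. If $y=n$ there is nothing to prove, so assume $y<n$. By Lemma~\ref{lem:top-leftmost}, the leftmost column $C^+_r$ containing $y$ has top $y$. Hence $y\alpha=r$. Suppose $(y+1)\alpha=y\alpha=r$. Then $y+1$ lies in a column with base $r$, namely $C^+_r$ itself. But $C^+_r$ is full with top $y$, so its entries are $r,\ldots,y$, a contradiction. Therefore $y\alpha\ne(y+1)\alpha$, and $y\in X(\alpha)$.

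\emph{Elements of $X(\alpha)$ are tops.} Take $x\in X(\alpha)$ and let $C^+_r$ be the leftmost column containing $x$, so $x\alpha=r$. We claim $x$ is the top of $C^+_r$. If $x=n$, this holds because no entry exceeds $n$. If $x<n$, suppose $C^+_r$ continues above $x$. By fullness, $x+1$ lies in $C^+_r$, so $(x+1)\alpha\le r$, since the leftmost column containing $x+1$ is at or left of $C^+_r$ and bases increase from left to right. Order-preservation gives $(x+1)\alpha\ge x\alpha=r$. Hence $(x+1)\alpha=x\alpha$, contradicting $x\in X(\alpha)$. So $x=c_r$.

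The main subtlety is the monotonicity step: we need that $\alpha$ is order-preserving, which holds since $\alpha\in\mathrm{Cat}_n$, and that column bases are increasing in the column index, which holds because $R_0=(0,\ldots,n)$ and $C^+_s$ has base $s$. The second inclusion also uses only the trivial facts that $x$ lies in $C^+_r$ and that $C^+_r$ is full.
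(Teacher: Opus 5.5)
Your proof is correct and follows the paper's argument. The inclusion of the tops into $X(\alpha)$ is exactly the paper's, via Lemma~\ref{lem:top-leftmost} and the uniform rule of Corollary~\ref{cor:phi-full-augmented}. The converse differs only slightly: you get $(x+1)\alpha\ge x\alpha$ from order-preservation of $\alpha$, whereas the paper reads $(x+1)\alpha=r$ straight off the picture, by noting that no column $C^+_{r'}$ with $r'<r$ contains $x+1$ (being full with base $r'<r\le x$, it would also contain $x$).
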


\begin{proof}
Let $y\in\{0,\ldots,n\}$ and let $C^+_r$ be the leftmost column
containing $y$, so $y\alpha=r$ by
Corollary~\ref{cor:phi-full-augmented}. If $y=n$, then $y\in X(\alpha)$
and $y=c_n$. Let $y<n$.

Suppose $y$ is not a top. Then $C^+_r$, being full, contains $y+1$. No
column $C^+_{r'}$ with $r'<r$ contains $y+1$: its base is $r'<r\le y$,
so, being full, it would contain $y$ as well. Hence
$(y+1)\alpha=r=y\alpha$, and $y\notin X(\alpha)$.

Suppose $y$ is a top. By Lemma~\ref{lem:top-leftmost}, $C^+_r$ has top
$y$, so it does not contain $y+1$. Hence $(y+1)\alpha\ne r=y\alpha$, and
$y\in X(\alpha)$.
\end{proof}

To read off $Y(\alpha)$, let $x\in X(\alpha)$. Then $x$ is a top, so by
Lemma~\ref{lem:top-leftmost} the leftmost column containing $x$ has top
$x$; it is therefore the leftmost column with top $x$, and its base is
$x\alpha$ (Corollary~\ref{cor:phi-full-augmented}). So reading the tops
$c_0,\ldots,c_n$ from left to right and keeping each value only at its
first occurrence gives $X(\alpha)$, and the bases of the columns where
the kept values occur form $Y(\alpha)$.

\begin{example}
In Example~\ref{ex:phi-full-augmented} (Figure~\ref{fig:augmented}),
the tops are $2,3,3,5,5,5,7,7$. Keeping first occurrences gives
$X(\alpha)=\{2,3,5,7\}$, at the columns with bases
$Y(\alpha)=\{0,1,3,6\}$: the kernel classes
$\{0,1,2\},\{3\},\{4,5\},\{6,7\}$ are sent to $0,1,3,6$.
\end{example}

\subsection{The pyramid picture and idempotents}
\label{sec:pyramids}

Let $T$ be a full $N$-tableau. Every entry $a$ in row $i\ge1$ of $T^+$
has $a-1$ in row $i-1$ (directly below it, as its column is full) and
$a$ in row $i-1$ (by containment). So $T^+$ can be redrawn as a wall of
bricks, each brick $a$ of row $i$ resting half on the brick $a-1$ and
half on the brick $a$ of row $i-1$, see
Figure~\ref{fig:pyramids}. Each column of $T^+$ becomes a diagonal
going down to the left, and the picture is a union of pyramids.

A brick $x$ is a \emph{peak} (nothing on top of it) iff it is the top
of its column and the highest, i.e.\ leftmost
(Corollary~\ref{cor:higher-left}), occurrence of $x$. So by
Lemma~\ref{lem:top-leftmost} and Proposition~\ref{prop:image-kernel},
the peaks are the elements of $X(\alpha)$, each occurring once, and
sliding down to the left from a peak $x$ to row $0$ reaches
$x\alpha\in Y(\alpha)$.

\begin{figure}[H]
\centering
\begin{tikzpicture}[scale=0.8, every node/.style={font=\small}]
\foreach \v in {0,...,7} {
  \draw (\v-0.5,0) rectangle (\v+0.5,1);
  \node at (\v,0.5) {$\v$};
}
\foreach \v/\c in {1/white,2/white,3/white,4/white,5/white,7/gray!30} {
  \draw[fill=\c] (\v-1,1) rectangle (\v,2);
  \node at (\v-0.5,1.5) {$\v$};
}
\foreach \v in {2,3,5} {
  \draw[fill=gray!30] (\v-1.5,2) rectangle (\v-0.5,3);
  \node at (\v-1,2.5) {$\v$};
}
\node[anchor=east] at (-0.8,0.5) {$R_0$};
\node[anchor=east] at (-0.8,1.5) {$R_1$};
\node[anchor=east] at (-0.8,2.5) {$R_2$};
\end{tikzpicture}
\caption{The pyramid picture of the augmented $N$-tableau of
Figure~\ref{fig:augmented}. The peaks (shaded) are $2,3,5,7$, giving
$X(\alpha)=\{2,3,5,7\}$; sliding down to the left from them reaches
$0,1,3,6$, giving $Y(\alpha)$.}
\label{fig:pyramids}
\end{figure}

The pyramid picture also detects idempotents. Since each brick of row
$i\ge1$ rests on two bricks of row $i-1$, a peak $x$ in row $i$ has a
full pyramid beneath it, whose base consists of the bricks
$x-i,\ldots,x$ of row $0$. As noted above, $x-i=x\alpha$,
so the base is $\{x\alpha,\ldots,x\}$. Note that the base lies in the
added row $0$ of $T^+$, not in row $1$ of $T$.

\begin{proposition}[Idempotents]
\label{prop:idempotents}
Let $(T,\rho)$ be a full $N$-tableau and $\alpha:=\Phi(\mathrm{col}(T,\rho))$.
Then $\alpha$ is idempotent if and only if, in the pyramid picture of
the augmented tableau $T^+$, the bases of the pyramids of the peaks,
taken in the added row $0$, are pairwise disjoint.
\end{proposition}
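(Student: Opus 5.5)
We reduce idempotency to a statement about the pair $(X(\alpha),Y(\alpha))$ and then translate it into the pyramid picture. Write $X(\alpha)=\{x_1<\cdots<x_k\}$ and $Y(\alpha)=\{z_1<\cdots<z_k\}$, so that the $j$-th kernel class $\{x_{j-1}+1,\ldots,x_j\}$ (with $x_0=-1$) is sent to $z_j$. Since $x_j$ lies in the $j$-th class, $z_j=x_j\alpha$. By the discussion after Proposition~\ref{prop:image-kernel} and the pyramid discussion, the peaks are exactly $x_1,\ldots,x_k$, and the base of the pyramid under $x_j$ is the interval $\{x_j\alpha,\ldots,x_j\}=\{z_j,\ldots,x_j\}$ of row $0$.

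The first step is the standard criterion: $\alpha$ is idempotent iff $\alpha$ fixes every element of its image, i.e.\ iff $z_j\alpha=z_j$ for all $j$. Since $\alpha$ is order-decreasing, $z_j\alpha=z_j$ holds iff $z_j$ lies in the $j$-th kernel class. We always have $z_j\le x_j$, so this is equivalent to $z_j>x_{j-1}$ for every $j\ge2$; for $j=1$ it is automatic, because the first class contains $0\le z_1$.

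The second step translates this into the picture. The bases $\{z_j,\ldots,x_j\}$ are intervals whose right endpoints $x_1<\cdots<x_k$ increase. Also $z_1<\cdots<z_k$, since they are the distinct images of the classes taken in order. Hence two such intervals, $j<j'$, are disjoint iff $z_{j'}>x_j$. For consecutive indices this is exactly the condition $z_j>x_{j-1}$. Consecutive disjointness then implies pairwise disjointness: for $j<j'$ we get $z_{j'}>x_{j'-1}\ge x_j$. So the family of bases is pairwise disjoint iff $z_j>x_{j-1}$ for all $j\ge2$, which by the first step holds iff $\alpha$ is idempotent.

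The main point requiring care is the identification of the base of the pyramid under the peak $x_j$ with $\{x_j\alpha,\ldots,x_j\}$. This rests on two facts established in the pyramid discussion: that a peak in row $i$ lies in a full column with base $x_j-i$, and that this base equals $x_j\alpha$ (Corollary~\ref{cor:phi-full-augmented} together with Lemma~\ref{lem:top-leftmost}). We take both as given from the text preceding the statement. Everything else is the elementary interval argument above.
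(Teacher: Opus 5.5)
Your proposal is correct and is essentially the paper's proof: you reduce idempotency to $x_j\alpha>x_{j-1}$ for every $j$ and identify this with the base $\{x_j\alpha,\ldots,x_j\}$ avoiding the earlier bases; your consecutive-implies-pairwise step just spells out what the paper gets from the earlier bases lying in $\{0,\ldots,x_{j-1}\}$ and containing $x_{j-1}$. One small quibble: $z_j\alpha=z_j$ holds iff $z_j$ lies in the $j$-th class by the definition of that class, not because $\alpha$ is order-decreasing; order-decreasing is needed only for $z_j\le x_j$.
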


\begin{proof}
Write $X(\alpha)=\{x_1<\cdots<x_k\}$ and $x_0:=-1$. The map $\alpha$ is
idempotent iff it fixes every point $x_j\alpha$ of its image, i.e.\ iff
$x_j\alpha$ lies in the kernel class $\{x_{j-1}+1,\ldots,x_j\}$ of
$x_j$; as $x_j\alpha\le x_j$, this means $x_j\alpha>x_{j-1}$. This is
exactly the condition that the base $\{x_j\alpha,\ldots,x_j\}$ does not
meet the bases of the earlier peaks, which lie in $\{0,\ldots,x_{j-1}\}$
and include $x_{j-1}$.
\end{proof}

\begin{example}
In Figure~\ref{fig:pyramids}, the peak $3$ is in row $2$, so its base
is $\{1,2,3\}$, while its kernel class is $\{3\}$; the base overlaps
that of the peak $2$. Accordingly, $\alpha$ is not idempotent:
$1=3\alpha$ lies in the image but $1\alpha=0$. By contrast, let $n=7$
and let $T=(R_1,R_2)$ be the full $N$-tableau with $R_1=(1,2,4,5,7)$ and
$R_2=(2,5)$. Its pyramid
picture (Figure~\ref{fig:pyramids-idempotent}) has peaks $2,5,7$ with
disjoint bases $\{0,1,2\},\{3,4,5\},\{6,7\}$, so $\alpha$ is
idempotent; indeed, it sends these three sets to $0,3,6$.
\end{example}

\begin{figure}[H]
\centering
\begin{tikzpicture}[scale=0.8, every node/.style={font=\small}]
\foreach \v in {0,...,7} {
  \draw (\v-0.5,0) rectangle (\v+0.5,1);
  \node at (\v,0.5) {$\v$};
}
\foreach \v/\c in {1/white,2/white,4/white,5/white,7/gray!30} {
  \draw[fill=\c] (\v-1,1) rectangle (\v,2);
  \node at (\v-0.5,1.5) {$\v$};
}
\foreach \v in {2,5} {
  \draw[fill=gray!30] (\v-1.5,2) rectangle (\v-0.5,3);
  \node at (\v-1,2.5) {$\v$};
}
\node[anchor=east] at (-0.8,0.5) {$R_0$};
\node[anchor=east] at (-0.8,1.5) {$R_1$};
\node[anchor=east] at (-0.8,2.5) {$R_2$};
\end{tikzpicture}
\caption{The pyramid picture of an idempotent: the peaks $2,5,7$
stand on the disjoint bases $\{0,1,2\}$, $\{3,4,5\}$, $\{6,7\}$.}
\label{fig:pyramids-idempotent}
\end{figure}

\subsection{A complete example}
\label{sec:complete-example}

We follow one $N$-tableau through the whole process. Let $n=9$ and
\[
R_1=(1,2,3,4,5,6,8),\quad R_2=(2,3,5,6,8),\quad R_3=(3,5,8),\quad
R_4=(5,8).
\]
This is an $N$-tableau: $R_4\subseteq R_3\subseteq R_2\subseteq R_1$
and $1<2<3<5$. Figure~\ref{fig:complete-trivial} draws it in its
trivial positioning, i.e.\ as a classical tableau.

\begin{figure}[H]
\centering
\begin{tikzpicture}[scale=0.7, every node/.style={font=\small}]
\foreach \i/\v in {1/1,2/2,3/3,4/4,5/5,6/6,7/8} {
  \draw (\i-1,0) rectangle (\i,1); \node at (\i-0.5,0.5) {$\v$};
}
\foreach \i/\v in {1/2,2/3,3/5,4/6,5/8} {
  \draw (\i-1,1) rectangle (\i,2); \node at (\i-0.5,1.5) {$\v$};
}
\foreach \i/\v in {1/3,2/5,3/8} {
  \draw (\i-1,2) rectangle (\i,3); \node at (\i-0.5,2.5) {$\v$};
}
\foreach \i/\v in {1/5,2/8} {
  \draw (\i-1,3) rectangle (\i,4); \node at (\i-0.5,3.5) {$\v$};
}
\foreach \r in {1,...,4} {\node[anchor=east] at (-0.3,\r-0.5) {$R_\r$};}
\end{tikzpicture}
\caption{The $N$-tableau of Section~\ref{sec:complete-example}, in its
trivial positioning.}
\label{fig:complete-trivial}
\end{figure}

\emph{Step 1: tight positioning.} Each entry moves to sit directly
above the largest smaller entry of the row below. The resulting columns,
from bottom to top, are
\[
C_1=(1,2,3,5),\ C_2=(2,3,5,8),\ C_3=(3),\ C_4=(4,5),\
C_5=(5,6,8),\ C_6=(6,8),\ C_7=(8);
\]
see Figure~\ref{fig:complete-tight}, left.

\emph{Step 2: full core.} The columns $C_3$, $C_4$ and $C_7$ are
full, and the others have gaps: $C_1$ at row $4$, $C_2$ at row $3$
(and again at row $4$), $C_5$ at row $3$, and $C_6$ at row $2$. So
$g_1=3$, $g_2=2$, $g_3=1$, $g_4=2$, $g_5=2$, $g_6=1$, $g_7=1$.
Deleting the five cells above these rows (shaded in
Figure~\ref{fig:complete-tight}) leaves the full $N$-tableau
\[
R^f_1=(1,2,3,4,5,6,8),\qquad R^f_2=(2,3,5,6),\qquad R^f_3=(3),
\]
and by Theorem~\ref{thm:phi-full-core} it has the same image under
$\Phi$.

\begin{figure}[H]
\centering
\begin{minipage}{0.45\textwidth}
\centering
\begin{tikzpicture}[scale=0.7, every node/.style={font=\small}]
\foreach \i/\v in {1/1,2/2,3/3,4/4,5/5,6/6,7/8} {
  \draw (\i-1,0) rectangle (\i,1); \node at (\i-0.5,0.5) {$\v$};
}
\foreach \i/\v/\c in {1/2/white,2/3/white,4/5/white,5/6/white,6/8/gray!30} {
  \draw[fill=\c] (\i-1,1) rectangle (\i,2); \node at (\i-0.5,1.5) {$\v$};
}
\foreach \i/\v/\c in {1/3/white,2/5/gray!30,5/8/gray!30} {
  \draw[fill=\c] (\i-1,2) rectangle (\i,3); \node at (\i-0.5,2.5) {$\v$};
}
\foreach \i/\v in {1/5,2/8} {
  \draw[fill=gray!30] (\i-1,3) rectangle (\i,4); \node at (\i-0.5,3.5) {$\v$};
}
\foreach \r in {1,...,4} {\node[anchor=east] at (-0.3,\r-0.5) {$R_\r$};}
\end{tikzpicture}

\smallskip
Tight positioning; deleted cells shaded.
\end{minipage}
\hfill
\begin{minipage}{0.45\textwidth}
\centering
\begin{tikzpicture}[scale=0.7, every node/.style={font=\small}]
\foreach \i/\v in {1/1,2/2,3/3,4/4,5/5,6/6,7/8} {
  \draw (\i-1,0) rectangle (\i,1); \node at (\i-0.5,0.5) {$\v$};
}
\foreach \i/\v in {1/2,2/3,4/5,5/6} {
  \draw (\i-1,1) rectangle (\i,2); \node at (\i-0.5,1.5) {$\v$};
}
\foreach \i/\v in {1/3} {
  \draw (\i-1,2) rectangle (\i,3); \node at (\i-0.5,2.5) {$\v$};
}
\foreach \r in {1,...,3} {\node[anchor=east] at (-0.3,\r-0.5) {$R^f_\r$};}
\end{tikzpicture}

\smallskip
The full core.
\end{minipage}
\caption{Steps 1 and 2 for the $N$-tableau of
Figure~\ref{fig:complete-trivial}.}
\label{fig:complete-tight}
\end{figure}

\emph{Step 3: image and kernel.} In the augmented $N$-tableau of the
full core (Figure~\ref{fig:complete-augmented}, left), the tops of the
columns $C^+_0,\ldots,C^+_9$ are
\[
3,3,3,5,6,6,6,8,8,9.
\]
Keeping first occurrences (Proposition~\ref{prop:image-kernel}) gives
$X(\alpha)=\{3,5,6,8,9\}$, at the columns with bases
$Y(\alpha)=\{0,3,4,7,9\}$. So the kernel classes
$\{0,1,2,3\},\{4,5\},\{6\},\{7,8\},\{9\}$ are sent to $0,3,4,7,9$
respectively:
\[
\alpha=\begin{pmatrix}
0&1&2&3&4&5&6&7&8&9\\
0&0&0&0&3&3&4&7&7&9
\end{pmatrix}.
\]
The fixed points of $\alpha$ are $0,7,9$, exactly the letters that
do not occur in $T$ (Corollary~\ref{cor:phi-full-height}).

\emph{Step 4: the pyramid picture}
(Figure~\ref{fig:complete-augmented}, right). The peaks are $3$ (in
row $3$), $5$ and $6$ (row $2$), $8$ (row $1$) and $9$ (row $0$),
giving $X(\alpha)$; sliding down to the left from them reaches
$3-3=0$, $5-2=3$, $6-2=4$, $8-1=7$ and $9-0=9$, giving $Y(\alpha)$
(Section~\ref{sec:pyramids}). The bases $\{0,1,2,3\}$ and $\{3,4,5\}$
of the peaks $3$ and $5$ meet, so $\alpha$ is not idempotent
(Proposition~\ref{prop:idempotents}).

\begin{figure}[H]
\centering
\begin{minipage}{0.48\textwidth}
\centering
\begin{tikzpicture}[scale=0.55, every node/.style={font=\small}]
\foreach \v in {0,...,9} {
  \draw (\v,0) rectangle (\v+1,1); \node at (\v+0.5,0.5) {$\v$};
}
\foreach \v in {1,2,3,4,5,6,8} {
  \draw (\v-1,1) rectangle (\v,2); \node at (\v-0.5,1.5) {$\v$};
}
\foreach \v in {2,3,5,6} {
  \draw (\v-2,2) rectangle (\v-1,3); \node at (\v-1.5,2.5) {$\v$};
}
\draw (0,3) rectangle (1,4); \node at (0.5,3.5) {$3$};
\foreach \r in {0,...,3} {\node[anchor=east] at (-0.2,\r+0.5) {$R_\r$};}
\end{tikzpicture}

\smallskip
The augmented full core.
\end{minipage}
\hfill
\begin{minipage}{0.48\textwidth}
\centering
\begin{tikzpicture}[scale=0.55, every node/.style={font=\small}]
\foreach \v/\c in {0/white,1/white,2/white,3/white,4/white,5/white,6/white,7/white,8/white,9/gray!30} {
  \draw[fill=\c] (\v-0.5,0) rectangle (\v+0.5,1); \node at (\v,0.5) {$\v$};
}
\foreach \v/\c in {1/white,2/white,3/white,4/white,5/white,6/white,8/gray!30} {
  \draw[fill=\c] (\v-1,1) rectangle (\v,2); \node at (\v-0.5,1.5) {$\v$};
}
\foreach \v/\c in {2/white,3/white,5/gray!30,6/gray!30} {
  \draw[fill=\c] (\v-1.5,2) rectangle (\v-0.5,3); \node at (\v-1,2.5) {$\v$};
}
\draw[fill=gray!30] (1,3) rectangle (2,4); \node at (1.5,3.5) {$3$};
\end{tikzpicture}

\smallskip
The pyramid picture; peaks shaded.
\end{minipage}
\caption{Steps 3 and 4: reading off $X(\alpha)=\{3,5,6,8,9\}$ and
$Y(\alpha)=\{0,3,4,7,9\}$.}
\label{fig:complete-augmented}
\end{figure}

\section*{AI disclosure}

The mathematical content of this paper -- the ideas, the definitions,
the results, and the construction of the proofs -- is the work of the
author. The AI models Claude Sonnet and Claude Opus (Anthropic), used
through Claude Code, wrote computer code that was used to test
conjectures empirically before they were proved, and wrote the text
and the figures of the paper. The author has verified all of the content and is
responsible for its correctness.

\bibliographystyle{plain}
\bibliography{StylicToCatalan}

\begin{thebibliography}{10}

\bibitem{AR}
A.~Abram and C.~Reutenauer.
\newblock The stylic monoid.
\newblock {\em Semigroup Forum}, 105(1):1--45, 2022.

\bibitem{AHMNT}
Antoine Abram, Florent Hivert, James~D. Mitchell, Jean-Christophe Novelli, and
  Maria Tsalakou.
\newblock Power quotients of plactic-like monoids.
\newblock In {\em Proceedings of the 13th edition of the conference on {R}andom
  {G}eneration of {C}ombinatorial {S}tructures. {P}olyominoes and {T}ilings},
  volume 403 of {\em Electron. Proc. Theor. Comput. Sci. (EPTCS)}, pages
  12--17. EPTCS, 2024.

\bibitem{ARS}
Antoine Abram, Christophe Reutenauer, and Franco~V. Saliola.
\newblock Quivers of stylic algebras.
\newblock {\em Algebr. Comb.}, 6(6):1621--1635, 2023.

\bibitem{AirdRibeiro}
Thomas Aird and Duarte Ribeiro.
\newblock Tropical representations and identities of the stylic monoid.
\newblock {\em Semigroup Forum}, 106(1):1--23, 2023.

\bibitem{Choffrut}
Christian Choffrut.
\newblock Grammic monoids with three generators.
\newblock {\em Semigroup Forum}, 105(3):680--692, 2022.

\bibitem{GM}
Olexandr Ganyushkin and Volodymyr Mazorchuk.
\newblock {\em Classical finite transformation semigroups}, volume~9 of {\em
  Algebra and Applications}.
\newblock Springer-Verlag London, Ltd., London, 2009.
\newblock An introduction.

\bibitem{Hage}
Nohra Hage.
\newblock Coherent presentations for stylic monoids via row rewriting, 2026.
\newblock arXiv:2609.15515.

\bibitem{Higgins93}
Peter~M. Higgins.
\newblock Combinatorial results for semigroups of order-preserving mappings.
\newblock {\em Math. Proc. Cambridge Philos. Soc.}, 113(2):281--296, 1993.

\bibitem{HivertThiery}
Florent Hivert and Nicolas~M. Thi\'ery.
\newblock The {H}ecke group algebra of a {C}oxeter group and its representation
  theory.
\newblock {\em J. Algebra}, 321(8):2230--2258, 2009.

\bibitem{Lothaire}
M.~Lothaire.
\newblock {\em Algebraic combinatorics on words}, volume~90 of {\em
  Encyclopedia of Mathematics and its Applications}.
\newblock Cambridge University Press, Cambridge, 2002.

\bibitem{MargolisSteinberg}
Stuart Margolis and Benjamin Steinberg.
\newblock The algebra of the {C}atalan monoid as an incidence algebra: a simple
  proof, 2018.
\newblock arXiv:1806.06531.

\bibitem{Solomon}
Andrew Solomon.
\newblock {C}atalan monoids, monoids of local endomorphisms, and their
  presentations.
\newblock {\em Semigroup Forum}, 53(3):351--368, 1996.

\bibitem{SteinbergBook}
Benjamin Steinberg.
\newblock {\em Representation theory of finite monoids}.
\newblock Universitext. Springer, Cham, 2016.

\bibitem{Volkov}
Mikhail~V. Volkov.
\newblock Identities of the stylic monoid.
\newblock {\em Semigroup Forum}, 105(1):345--349, 2022.

\end{thebibliography}

\end{document}